\documentclass[a4paper]{amsart}

\usepackage[utf8]{inputenc}
\usepackage{enumerate}

%%%%%%%%%% Start TeXmacs macros

%%%%%%%%%%%%%%%%%%%%%%Ex Commands%%%%%%%%%%%%%%%%%%%%
%---- Ensemles : entiers, reels, complexes... ----
\newcommand{\Nn}{\mathbb{N}}

\newcommand{\N}{\mathbb{N}}
\newcommand{\Z}{\mathbb{Z}}

%---- Modifications de symboles -----
\renewcommand {\epsilon}{\varepsilon}

\renewcommand {\leq}{\leqslant}
\renewcommand {\geq}{\geqslant}

\newcommand{\lcm}{\mathop{\mathrm{lcm}}\nolimits}

\renewcommand{\gcd}{\mathop{\mathrm{gcd}}\nolimits}

\newcommand{\tq}{\;\: | \:\:}
\newcommand{\mesp}{\;\:}

\newcommand{\absolue}[1]{\left| #1 \right|}
\newcommand {\vfi}{\varphi}

\newcommand{\iffl}{\longleftrightarrow}
\newcommand{\iffs}{\leftrightarrow}
%---- Structure Exercice -----
\newtheorem{theorem}{Theorem}[section]
\newtheorem{lem}[theorem]{Lemma}
\newtheorem{proposition}[theorem]{Proposition}
\newtheorem{corr}[theorem]{Corollary}

\newtheorem{definition}[theorem]{Definition}
\newtheorem{defn}[theorem]{Definition}

\newtheorem{remark}[theorem]{Remark}
\newtheorem{fact}[theorem]{Fact}

\theoremstyle{definition}
\newtheorem{example}[theorem]{Example}

\newtheorem{nota}[theorem]{Notation}

\usepackage{amssymb}

\title{Valued modules over skew polynomial rings I}
\author{Gönenç Onay}

\address{Mathematics Department,
Faculty of Science and Letters
Mimar Sinan Fine Arts University
Bomonti Campus 34380
Istanbul}
\email{gonenc.onay@msgsu.edu.tr}

\thanks{The author would like to thank {\it Equipe de Logique Mathématiques de Paris} for supporting his stay during September 2013  where the current work is mainly completed. Françoise Delon read  this paper several times, made lot of valuable comments, to whom the author is  very much grateful. Many thanks go to Piotr Kowalski for his final remarks.}

\begin{document}
\sloppy
\begin{abstract}

We introduce a notion of valued module which is suitable to study valued fields of positive characteristic. Then we built-up a robust theory of henselianity in the language of valued modules and prove  Ax-Kochen Ershov type results.
\end{abstract}
\maketitle

\section{Introduction}
A valued abelian group, is an abelian group $M$ together with a linearly ordered set $\Delta$ and a function
$v:M \to \Delta$, such that \begin{enumerate}[a.]
\item $v(x\pm y)\geq \min\{v(x),v(y)\}$
\item $v(x)=\infty \Leftrightarrow x=0$
\end{enumerate}
where $\infty$ is the maximum of $\Delta$.

A module such that the underlying abelian group is valued will be called a valued module. Depending on the context one wants to study, various compatibility assumptions relating the valuation $v$ and the action of scalars are considered in the literature.  The first article on the subject that the author is aware of is due to Fleischer (\cite{fleischer}); it extends the notion of
Krull valuation to modules where $v(x.r)\geq v(x)$ for any scalar $r$ and proves the equivalence of maximality and (pseudo-) convergence of pseudo-Cauchy series. In Rohwer's thesis (\cite{rohwer}) modules are also used to understand the model theory of valued fields of positive characteristic or more generally valued  difference fields. In more recent articles of Point and B\'elair  \cite{BePo} (resp. Maalouf \cite{fares}) valued modules (resp. vector spaces) are studied.

Similarly as in \cite{rohwer}, we are interested  in  modules  which  come from the theory of valued fields in characteristic $p>0$. The rings that we will consider
are unitary and  arise as $K$-subalgebras of the endomorphisms ring of a field $K$ generated by a distinguished endomorphism $\vfi$ over $K$.
\begin{definition}\label{defR}
Let $K$ be a field and $\varphi$  a ring endomorphism of $K$. The ring $K[t;\vfi]$ is given as a set by formal sums $\sum_{i \in \Nn} t^ia_i$
where $t$ is the indeterminate, $a_i \in K$ and $\{i \in \Nn \tq a_i\neq 0\}$ is finite. The addition is defined term by term and the multiplication obeys the (non-)commutation rule: $at=t\vfi(a)$. More precisely, \begin{enumerate}
\item $\sum_{i \in \Nn} t^ia_i + \sum_{i \in \Nn} t^ib_i = \sum_{i \in \Nn} t^i(a_i + b_i) $
\item $\left(\sum_{i \in \Nn} t^ia_i\right)\left(\sum_{i \in \Nn} t^ib_i\right)=\sum_{i \in \Nn} t^ic_i$ where
$c_i=\sum_{k+l=i} \vfi^l(a_k)b_l$.
\end{enumerate}
\end{definition}
Most often for fixed $K$ and $\vfi$ we will refer to this ring $K[t;\varphi]$ as $R$. Any  field $K$  of characteristic  of $p>0$ has a natural module structure over its ring of additive polynomials: these are polynomials of the form $\sum_{i=0}^n a_iX^{p^{i}}$, where the ring operations are given by usual addition and (right-)composition. This ring is isomorphic to the ring $K[t; Frob]$ where $Frob$ is the the map $x \mapsto x^p$. It is widely believed that serious problems in the model theory of valued fields in positive characteristic arise from additive polynomials (see for example \cite{Kuhlmann06additivepolynomials}). Studying and axiomatizing the underlying ultrametric module structure can be considered as an abstract analysis that isolates this phenomenon. Note also that any valued field together with an arbitrary automorphism inherits an analogue module structure.

After the famous Ax-Kochen and Ershov (A-K, E) theorem which provides a corrected form of a conjecture of Artin\footnote{Artin conjectured that the field $\mathbb{Q}_p$ is $C_2$ (i.e.  every homogeneous polynomial of degree $d$ with $>d^2$ variables has a non trivial zero)  for any prime $p$. This is false by a result of Terjanian (see \cite{terjanian}).}, there were similar theorems  permitting to understand the first order theories of some other class of valued fields (see \cite{vdd}) for a survey). In this paper,  we establish (A-K, E) type results by proving a relative elimination of quantifiers  for modules called {\it henselian divisible valued  $R$-modules}. We first observe the theory of that divisible valued modules is not suitable to reach our aim (cf. counter example \ref{contre-exemple} and the remark which follows). Thus we add an extra assumption on the valuation, which comes from an interpretation of Hensel's lemma as a local inversion theorem. More precisely, in this context, there is an abstract analogue of the maximal ideal of a valuation ring; call this $M_{>\theta}$ and then our assumption asserts that the multiplication by some special elements of $R$ (we will call them {\it separable polynomials}) induces a bijection $M_{>\theta} \to M_{>\theta}$

Fix $R=K[K;\varphi]$ as above for given $\varphi$ and $K$. In our model theoretical setting valued $R$-modules are considered in the two sorted first order language
$$L:=\{0,+,-,(.r)_{r \in R}\}\cup \{<,\tau,\infty,(R_n)_{n \in \mathbb{N}}\} \cup \{v \}$$ where
\begin{enumerate}
\item the {\it module sort} language is $L_{Mod}(R):=\{0,+,-,\{.r\}_{r \in R}\}$
(which is the usual language of modules),
\item the {\it value set sort} language is $L_V:=\{<,\tau,\infty,(R_n)_{n \in
\mathbb{N}}\}$ with $\tau$  a unary function symbol and $R_n$  a unary
predicate symbol for each $n$,
\item $v$ is a unary function symbol to be interpreted as the valuation.
\end{enumerate}
\medskip
We  describe now the content of our paper.

In section 2, we consider pure $R$-modules and describe the completions of the theory of divisible $R$-modules as follows:

\begin{theorem}\label{completions} Any complete theory of non zero  divisible $R$-modules admits elimination of quantifiers and  is obtained by specifying for each irreducible $q \in R$, the number of elements annihilated by  $q$.
\end{theorem}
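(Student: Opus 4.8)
The plan is to treat this as a statement in the model theory of modules. I would use three facts: that $R=K[t;\varphi]$ is a right principal ideal domain; the Baur--Monk theorem, which says that modulo the theory of all $R$-modules every formula is a Boolean combination of positive primitive (pp-)formulas; and the Baur--Garavaglia--Monk criterion, which says that a complete theory of $R$-modules is determined by the invariants $\mathrm{Inv}(M;\varphi,\psi)=|\varphi(M)/\psi(M)|$ attached to pairs of pp-formulas $\psi\le\varphi$. First I would record the ring-theoretic input: since $\varphi$ is injective, matching leading coefficients requires only inverting elements of $K$, so $R$ carries a right Euclidean division; hence $R$ is a right PID, in particular a right Noetherian right Ore domain, with skew field of fractions $\mathbb K$, and any nonzero $\mathbb K$-vector space is, as a right $R$-module, torsion-free and divisible. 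Here ``divisible'' means that every map $x\mapsto xr$ ($0\ne r\in R$) is surjective.

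For the \textbf{quantifier elimination} it then suffices, by Baur--Monk, to show that modulo the theory of divisible $R$-modules every pp-formula is equivalent to a quantifier-free one --- indeed to a finite conjunction of atomic formulas $\sum_i x_i r_i=0$. Write a pp-formula as $\exists\bar y\,(\bar x A+\bar y B=0)$ with $A,B$ matrices over $R$. Invertible column operations on $\binom{A}{B}$ (invertible recombinations of the equations) and permutations and relabellings of the variables $\bar y$ preserve the formula up to logical equivalence over all $R$-modules; using the right Euclidean algorithm to reduce the rows of $B$ one at a time, one brings $B$ to column-echelon form, in which the trailing columns vanish and the leading columns are triangular with nonzero pivots $d_1,\dots,d_s$. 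The equations coming from the zero columns are atomic conditions on $\bar x$; the pivot equations, read from the bottom up (with the remaining $y$'s set to $0$), successively present $y_jd_j$ as an element of $M$ already determined by $\bar x$ and the previously chosen $y_k$'s, and such an equation has a solution for $y_j$ because $d_j\ne0$ and $M$ is divisible. Hence the existential quantifiers are vacuous and the pp-formula is equivalent to the conjunction of the atomic conditions from the zero columns. This holds modulo the theory of divisible $R$-modules, hence modulo any complete extension; so QE holds.

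For the \textbf{description of the completions}, a complete theory $T$ of $R$-modules is, by Baur--Garavaglia--Monk, determined by its invariants over pp-pairs. By the previous step, modulo divisibility every pp-formula is a conjunction of atomic formulas; in one variable $\bigwedge_i(xa_i=0)$ is equivalent to $xd=0$ where $d$ is a right gcd of the $a_i$, so the one-variable pp-lattice is the lattice of principal right ideals of $R$. For a pp-pair $(xb=0)\le(xa=0)$ --- which forces $a=bc$ for some $c$ --- multiplication by $b$ induces, using divisibility, an isomorphism $\{x: xa=0\}/\{x: xb=0\}\cong\{x: xc=0\}$; iterating with a factorization $c=q_1\cdots q_r$ into irreducibles gives $|\{x:xc=0\}|=\prod_i|\{x:xq_i=0\}|$, and the same manipulations reduce the invariants of pp-pairs in several variables (conjunctions of atomic formulas, put in echelon form by column operations on the equations together with automorphisms $\bar x\mapsto\bar x V$ of $M^n$) to products of one-variable ones. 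Hence $T$ is determined by the numbers $n_q:=|\{x: xq=0\}|$ for $q$ irreducible. Conversely these can be prescribed independently: a divisible $R$-module is injective (Baer's criterion, since right ideals of $R$ are principal and $R$ is a domain), hence --- by the decomposition of injective modules over the Noetherian ring $R$ into indecomposables --- a direct sum of copies of $\mathbb K$ and of the indecomposable injectives $E(R/qR)$ for $q$ irreducible; varying the multiplicity of each $E(R/qR)$, and adjoining one copy of $\mathbb K$ to keep the module nonzero, realises any admissible value of each $n_q$ independently. This presents the completions of the theory of nonzero divisible $R$-modules exactly as the admissible choices of a value for each $n_q$.

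The step I expect to be the main obstacle is the pp-elimination over the noncommutative $R$: when $\varphi$ is not surjective $R$ is only a one-sided PID, so the classical Smith normal form is not available, and one has to check that column operations on the equations, relabelling of the existential variables, and ``downward'' solving of the resulting triangular system by divisibility really do suffice. A secondary difficulty is the realisability half above, which requires the explicit list of indecomposable injective $R$-modules and the computation of their $q$-torsion.
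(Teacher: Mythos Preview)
Your quantifier-elimination argument is correct and is essentially the paper's Proposition~\ref{pp2atom}: both reduce the coefficient matrix of the pp-formula to a triangular shape by one-sided column operations and then discharge the existential quantifiers by divisibility.

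The gap is in your description of the completions. You assert that a pp-pair $(xb=0)\le(xa=0)$ ``forces $a=bc$ for some $c$,'' and then identify the invariant with $|\mathrm{ann}_M(c)|$. That implication is false. The inclusion $\mathrm{ann}_M(b)\subseteq\mathrm{ann}_M(a)$ is a statement about the particular model $M$, not about the ring: if $\mathrm{ann}_M(b)=0$ --- which happens whenever $M$ carries no torsion for the irreducible factors of $b$, a possibility your own realisability paragraph explicitly allows --- then $\mathrm{ann}_M(b)\subseteq\mathrm{ann}_M(a)$ holds for \emph{every} $a$, whereas $b\mid a$ almost never does. So the isomorphism $\mathrm{ann}_M(a)/\mathrm{ann}_M(b)\cong\mathrm{ann}_M(c)$ via multiplication by $b$ is unavailable in general, and your reduction of the Baur--Garavaglia--Monk invariants to products $\prod_i n_{q_i}$ collapses at this step.

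The paper repairs this with Lemma~\ref{annq_sur_ann_s}, an induction on the number of irreducible factors of $s$ (your $b$). Writing $s=s_1\cdots s_n$, one splits on whether $\mathrm{ann}_M(s_1)$ is trivial. If $\mathrm{ann}_M(s_1)\neq0$, any nonzero root of $s_1$ lies in $\mathrm{ann}_M(q)$ and has minimal polynomial $s_1$, so $s_1\mid q$ and one peels off a factor as you intended. If $\mathrm{ann}_M(s_1)=0$, multiplication by $s_1$ is injective on $\mathrm{ann}_M(q)$, and one replaces the pair $(s,q)$ by $(s_2\cdots s_n,\,q_1)$, where $x q_1=0$ is the atomic formula equivalent (by your QE) to the pp-formula $\exists y\,(y s_1=x\wedge y q=0)$ defining $\mathrm{ann}_M(q)\cdot s_1$. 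One still concludes that every invariant equals $|\mathrm{ann}_M(r)|$ for some $r$, but $r$ depends on $s$, $q$ \emph{and on the theory}, not merely on a right division in $R$; the case analysis is genuinely needed.
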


In the Section 3, we introduce  valued  $R$-modules $(M,\Delta,v)$  which by definition  satisfy the following compatibility properties: \begin{enumerate}
\item $v(x.\lambda)=v(x)$ for all unit $\lambda$ in $R$.
\item The map $x \to v(x.t)$ induces a strictly increasing function $\tau: \Delta \to \Delta$ and there exists at most one value $\theta \in \Delta$, such that, if  $r\in R\setminus\{0\}$ and  $x\in M$ with $v(x)\neq \theta$, then $v(x.r)=\tau^{k}(v(x))$ for some $k \in \mathbb{N}$ depending only on $r$ and $v(x)$.
\end{enumerate}

Note that any valued field $(U,v)$ of characteristic $p>0$ inherits naturally a valued $K[t;Frob]$-module structure where  $K\subseteq U$ is trivially valued by $v$ and  the function $\tau$ is  $\gamma \mapsto p\gamma$.

We then investigate the $L$-theory of {\it henselian} and {\it henselian divisible  $R$-modules}. We have the following main theorem which  enables us to recover the theory of a divisible henselian valued $R$-module from the theory of its value set (in $L_V$) and from the theory of its  torsion submodule (in $L_{Mod}(R)$) which, in some sense, plays the role of the \emph{residue field}.

\begin{theorem}[A-K,E $ \equiv $]\label{aketrivial}
Let $ (F, v) $ and $ (G, w) $ be two non zero henselian divisible modules such that $F_{tor}$ and $G_{tor}$ are elementary equivalent as $L_{Mod}(R)$-structures and $ v (F) $ and $ v (G) $ are elementary equivalent in the language $ L_{V} $. Then $ (F, v) $ and $ (G, v) $ are elementary equivalent as $ L $-structures.
\end{theorem}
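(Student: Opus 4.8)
The plan is to prove the theorem via a back-and-forth / embedding argument combined with the relative quantifier elimination result that is the technical heart of the paper. The natural route is: (1) replace $(F,v)$ and $(G,w)$ by sufficiently saturated elementary extensions, so we may work with $|\Delta|^+$-saturated models; (2) show that any henselian divisible valued $R$-module $(M,\Delta,v)$ is controlled, up to $L$-elementary equivalence, by the pair consisting of $v(M)$ as an $L_V$-structure and $M_{\mathrm{tor}}$ as an $L_{Mod}(R)$-structure; and (3) conclude that if these invariants agree for $F$ and $G$ then the full $L$-theories agree. Step (2) should be exactly a relative quantifier elimination statement: every $L$-formula in the module sort is equivalent, modulo the theory of henselian divisible valued $R$-modules, to a boolean combination of $L_V$-formulas about the values of linear terms and $L_{Mod}(R)$-formulas about torsion. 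Given such an elimination, the Feferman–Vaught / Ax–Kochen–Ershov packaging of step (3) is formal.

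Concretely, I would set up the back-and-forth on \emph{good substructures}: triples $(A, v(A), \iota)$ where $A\le F$, $v(A)\le v(F)$, together with a partial isomorphism onto the corresponding data in $G$ that is an $L_V$-elementary map on the value-set part and whose restriction to $A_{\mathrm{tor}}$ is an $L_{Mod}(R)$-elementary map. The elementary equivalences hypothesised in the statement, together with saturation, provide the base case (start from the prime models, or from the torsion submodules together with a common embedding of value sets). For the extension step one must, given $a\in F\setminus A$, enlarge $A$ to contain $a$ while preserving the good-substructure conditions, using: divisibility to make the module part divisible and to realise "$q$-divisibility witnesses"; henselianity — i.e. the bijectivity of multiplication by separable polynomials on $M_{>\theta}$ — to solve the relevant equations over $A$ inside $F$ and match them over $G$; Theorem \ref{completions} to handle the pure-module/torsion content (the number of elements annihilated by each irreducible $q$ is an elementary invariant and is read off from $M_{\mathrm{tor}}$); and the axioms (1)–(2) of valued $R$-modules to track how $v$ moves under multiplication by $r\in R$ via the iterates $\tau^k$. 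The single value $\theta$ where the regular behaviour $v(x.r)=\tau^k(v(x))$ can fail must be treated as a distinguished case; one checks it is definable and handled uniformly on both sides.

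The main obstacle I expect is the extension step when the new element $a$ has value equal to (or generates values approaching) the exceptional $\theta$, and more generally handling \emph{immediate-type} extensions: adjoining an $a$ that is a pseudo-limit of a pseudo-Cauchy sequence from $A$ without changing the value set or the torsion. This is precisely where henselianity must do its work — the separable-polynomial bijectivity on $M_{>\theta}$ should guarantee that such pseudo-limits, when they correspond to separable equations, are already forced (so no genuine new immediate extension exists that is not matched on the other side), while inseparable / purely "$t$-divisible" directions are governed by divisibility and by the $\tau$-structure of $\Delta$. Making the pseudo-Cauchy analysis interact correctly with the two-sorted language, and ensuring the value-set moves stay $L_V$-elementary (this is where the predicates $R_n$ and the function $\tau$ enter, encoding the relevant "residue" information at each level $\tau^k$), is the delicate bookkeeping. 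Once the back-and-forth system is shown to have the extension property on both sides, a standard argument in $|\Delta|^+$-saturated models yields a partial isomorphism defined everywhere, hence $F\equiv_L G$, which is the assertion of Theorem \ref{aketrivial}.
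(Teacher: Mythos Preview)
Your overall architecture matches the paper's: reduce Theorem~\ref{aketrivial} to a relative quantifier elimination (every $L$-formula is equivalent modulo the theory of henselian divisible valued $R$-modules to a formula in the set $\Theta_v$ built from quantifier-free module formulas and arbitrary $L_V$-formulas in the values of terms), and establish that elimination by an embedding test between saturated models. Where you diverge is in the technical engine for the extension step.

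You anticipate that the hard case is an immediate/pseudo-Cauchy extension near $\theta$, and plan to resolve it by a pseudo-limit analysis with henselianity forcing uniqueness. The paper bypasses pseudo-Cauchy machinery entirely. Its key structural move is the decomposition $M = M_{tor}\oplus M_\theta \oplus M_{>\theta}\oplus M_{-}$ (Theorem~\ref{lemdecomp}), valid for any henselian divisible valued module, together with the construction $\hat A=\{y: y.r\in A\text{ for some }r\neq 0\}$, which is the \emph{unique} divisible closure of $A+M_{tor}$ and is itself henselian (Lemma~\ref{uniquecloturedivisiblesurTor}). One first extends the embedding from $A$ to $\hat A$ (Proposition~\ref{Achapeau}), using the decomposition to handle each summand separately as a $D$-vector space and invoking Corollary~\ref{isomorphismetors} for the torsion part. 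After this step $A\supseteq M_{tor}$, so every remaining $x\in M\setminus A$ is regular (Corollary~\ref{obsmodT}), and the irregular-at-$\theta$ phenomenon you worry about has already been absorbed. The rest of the extension (Proposition~\ref{plongementsature}) is then a straightforward type-realization: one writes down the quantifier-free type $p(Y)=\{w(Y-f(a))=f_v(v(x-a)):a\in A\}$ and checks finite satisfiability directly from the $R_n$ predicates, with no pseudo-convergence argument needed.

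So your plan is not wrong, but it aims heavier machinery at a difficulty the paper dissolves structurally. The payoff of the paper's route is that once torsion is swallowed into $A$ via $\hat A$, regularity makes $v(x.r)$ computable from $v(x)$ alone, and the embedding problem reduces to matching cuts in the value set plus ball-cardinality data---exactly what $L_V$ and the $R_n$ record.
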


This article corresponds to Chapters Two and Three of  author's thesis (cf. \cite{gonenc}).

\section{Divisible $R$-modules}\label{sec:divmod}
First, we  quickly summarize some basic algebraic facts about the rings over which we will consider our modules. Once can refer to \cite{cohn} section 2.1 for more details.

Let  $K$ be a field and $\varphi$  a {\it field endomorphism} of  $K$. Set
$R:=K[t;\vfi]$, the ring of $\vfi$-twisted polynomials in the variable $t$ (see Definition \ref{defR} in Introduction). We will mean by {\bf polynomial} any element of this ring. Each $r \in R$ can be written uniquely as $\sum_{i \in \mathbb{N}} t^ia_i$, with $I:=\{i \tq a_i\neq 0\}$ finite. The ring $R$ is equipped with the degree function  defined by $\deg(r)=\max I$ (with the convention that $\max  \emptyset = -\infty$). We have $\deg(pq)=\deg(qp)=\deg(p) + \deg(q)$. Note that $R$ is a domain, i.e. has no zero divisors. The set of units of $R$  is  $K^{\times}$ and together with the function $\deg$,  $R$ is right euclidean  (and hence right principal)\footnote{ Note that this ring is also left euclidean whenever $\varphi$ is an automorphism.} : for all $q,q'\in R$ with $q'\neq 0$ there exists $r,r'\in R$ such that $q=q'r' + r$ where $\deg(r)<deg(q')$ and $r$ is unique up to right multiplication by a unit. We say that $q'$ divides $q$ if $q$ is a right multiple of $q'$, that is if $r=0$ above. For any  non zero  $a$ and  $b$
there exists a unique monic common divisor of maximal degree,   denoted by $\gcd(a,b)$.

Furthermore, by the commutation rule $at=t\varphi(a)$, any non zero element $q$ can be written as
\begin{equation}\label{eqirr}q=t^nq_1 \dots q_s
\end{equation}
where  $t$ and the $q_i$ are irreducible.
Note  that $R$ is in particular a right Ore ring, that is every two non zero elements $a$ and $b$ have a non zero common right multiple. Taking into account the degree function there is a common right multiple of least degree unique up to a unit: we denote it  $\lcm(a,b)$ as usual. As any Ore ring, $R$ is embedded into a smallest  (right-)division ring, called its skew field of fractions.

\begin{nota} We will denote by $D$  the skew field of fractions of $R$. \end{nota}

Any  $R$-module that we will consider is a right $R$-module, and we will simply say $R$-module.

The following result is an easy observation as in the case of a torsion free divisible abelian groups:

\begin{lem}
	Any torsion free divisible module over a Ore ring $S$ has a unique vector space structure over the skew field of  fraction of $S$. In particular, every divisible torsion free $R$-module is a $D$-vector space.
\end{lem}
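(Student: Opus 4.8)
The plan is to mimic the classical construction of the field of fractions of $\Z$ acting on a torsion-free divisible abelian group, replacing commutativity by the Ore condition. Let $M$ be a torsion-free divisible right $S$-module, and let $D$ be the skew field of fractions of $S$, realized as right fractions $a s^{-1}$ with $a,s \in S$, $s \neq 0$ (this is the standard Ore localization). First I would define the action: given $x \in M$ and a fraction $a s^{-1} \in D$, since $M$ is divisible there is some $y \in M$ with $y.s = x$, and since $M$ is torsion-free such a $y$ is unique; then set $x.(a s^{-1}) := y.a$. The bulk of the work is checking this is well defined and gives a vector-space structure.

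The key steps, in order, are: (i) existence and uniqueness of $y$ with $y.s = x$ — existence is divisibility, uniqueness is torsion-freeness, so the map $x \mapsto y$ is a well-defined additive bijection $M \to M$ which I will think of as "multiplication by $s^{-1}$"; (ii) independence of the choice of representative of the fraction: if $a s^{-1} = b u^{-1}$ in $D$, I must show $y.a = z.b$ where $y.s = x = z.u$ — here one uses the Ore condition to pass to a common denominator, i.e. writes $s^{-1} = c (sc)^{-1}$-type identities and reduces both sides to a common one, exploiting that the classical equality of Ore fractions $a s^{-1} = b u^{-1}$ means there are $c,d$ with $sc = ud \neq 0$ and $ac = bd$; (iii) verification of the module axioms — additivity in $x$ is immediate from uniqueness, and compatibility with addition and multiplication in $D$ (that $x.(\alpha+\beta) = x.\alpha + x.\beta$ and $x.(\alpha\beta) = (x.\alpha).\beta$ for $\alpha,\beta \in D$) again reduces, via common denominators, to the already-known relations in $S$; (iv) that the original $S$-action is recovered by embedding $S \hookrightarrow D$, which is clear since for $s$ a unit... rather, for $a \in S$ the fraction $a = a\cdot 1^{-1}$ acts as $x \mapsto x.a$. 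Finally, every $D$-vector space is in particular divisible and torsion-free as an $R$-module and the $D$-structure is unique because any $D$-action must send $(x.s^{-1}).s$ to $x$, forcing the formula above; the last sentence of the lemma is then the special case $S = R$, noting $R$ is right Ore as recalled in the excerpt.

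The main obstacle I expect is step (ii), the well-definedness across different representations of the same element of $D$: this is exactly the point where the non-commutativity bites, and it requires being careful about left versus right fractions and invoking the precise form of the Ore equivalence relation. Once that is handled cleanly, steps (iii) and (iv) are routine diagram-chasing with common denominators. A minor point worth stating explicitly up front is that the "multiplication by $s^{-1}$" maps for different $s$ commute appropriately and that divisibility plus torsion-freeness make $M$ into a module over the (left) Ore localization at $S \setminus \{0\}$, which is precisely $D$; phrasing the argument in terms of the universal property of the Ore localization — a ring map $S \to \mathrm{End}(M)$ sending every nonzero element of $S$ to an automorphism extends uniquely to $D \to \mathrm{End}(M)$ — is the slickest route and I would present it that way, with step (i) supplying the hypothesis of that universal property.
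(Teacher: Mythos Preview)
The paper does not actually prove this lemma; it introduces it as ``an easy observation as in the case of torsion free divisible abelian groups'' and states it without proof. So there is nothing on the paper's side to compare to. Your overall strategy is the standard one and is fine, and the universal-property formulation you propose at the end is both correct and the cleanest route.

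That said, your explicit formula in step~(i) has the operations in the wrong order. You set $x.(as^{-1}) := y.a$ where $y.s = x$; but then
\[
\bigl(x.(as^{-1})\bigr).s \;=\; (y.a).s \;=\; y.(as),
\qquad\text{while}\qquad
x.\bigl((as^{-1})s\bigr) \;=\; x.a \;=\; (y.s).a \;=\; y.(sa),
\]
and these disagree whenever $as \neq sa$. What your formula actually computes is $x.(s^{-1}a)$, not $x.(as^{-1})$. This is not harmless here: $R = K[t;\varphi]$ is in general only \emph{right} Ore (the paper remarks it is left euclidean only when $\varphi$ is onto), so elements of $D$ are right fractions $as^{-1}$ and need not all be of the form $s^{-1}a$. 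The correct definition is
\[
x.(as^{-1}) \;:=\; \text{the unique } z \in M \text{ with } z.s = x.a,
\]
after which your steps (ii)--(iv) go through by the same common-denominator bookkeeping you describe. Equivalently, and matching your final remark: the map $r \mapsto (x \mapsto x.r)$ is a ring \emph{anti}-homomorphism $S \to \mathrm{End}(M)$ (i.e.\ a homomorphism $S^{op} \to \mathrm{End}(M)$) sending every nonzero element to a bijection by divisibility and torsion-freeness; since $S$ right Ore means $S^{op}$ is left Ore, the universal property of Ore localization extends this uniquely to $D^{op} \to \mathrm{End}(M)$, which is precisely a right $D$-module structure on $M$ extending the given $S$-action.
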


\begin{nota}
Let $ M $ be an $ R $-module. We denote by $ M_{tor} $ the set
    $$ \{x \in M \tq  \exists r \in R \setminus \{0 \} \mesp \text{such that} \mesp x.r = 0 \}.$$ \end{nota}
\begin{lem}\label{Mtordivisible}
If $ M $ is a  divisible $ R $-module then $ M_{tor} $ is a divisible submodule of $M$.
\begin{proof}
Since $ R $ is a  Ore ring, $ M_{tor} $ is a submodule (cf. \cite{DDP1} proposition 3.5). Moreover, if $ 0 \neq x \in M_{tor} $, with $ q \in R \setminus \{0 \} $ such that $ x.q = 0 $ then, for all $ r \in R \setminus \{0 \} $ and for all $ y \in M $ such that $ y.r = x $, we have
$y.rq  =  0 $. Therefore $ y \in M_{tor} $, which shows that $ M_{tor} $ is divisible.
 \end{proof}
\end{lem}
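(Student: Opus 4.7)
The plan is to separate the two assertions: first that $M_{tor}$ is a submodule, and then that it is a divisible submodule. The submodule part is the classical Ore-ring observation cited from \cite{DDP1}, and I would simply sketch it for completeness. Given $x,y \in M_{tor}$ with nonzero annihilators $q, q' \in R$, the right Ore condition supplies $a,b \in R$ with $qa = q'b \neq 0$, and then
$$(x+y).(qa) = (x.q).a + (y.q').b = 0,$$
so $x+y \in M_{tor}$. Similarly, for $\lambda \in R$ the right Ore condition yields $q'', a \in R$ with $\lambda q'' = qa \neq 0$; here $q'' \neq 0$ because $R$ is a domain, so $(x.\lambda).q'' = x.(qa) = 0$, which shows $x.\lambda \in M_{tor}$.

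For divisibility, the argument is direct and uses only that $R$ is a domain together with the hypothesis that $M$ itself is divisible. Fix a nonzero $x \in M_{tor}$ with $x.q = 0$ for some $q \in R \setminus \{0\}$, and take any $r \in R \setminus \{0\}$. Using divisibility of $M$, choose $y \in M$ with $y.r = x$. Then
$$y.(rq) = (y.r).q = x.q = 0,$$
and since $R$ has no zero divisors, $rq \neq 0$. Hence $y \in M_{tor}$, which is exactly what is needed to conclude that $M_{tor}$ is divisible.

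There is no real obstacle here: the only point that requires the slightest care is choosing the direction of the Ore property correctly (common right multiples, because we are acting on the right), and using that $R$ is a domain to ensure the witnessing annihilators one produces along the way are nonzero.
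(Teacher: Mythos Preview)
Your proof is correct and follows essentially the same approach as the paper: the divisibility argument is identical (find $y$ with $y.r=x$ and observe $y.(rq)=0$ with $rq\neq 0$), while you additionally spell out the Ore-ring argument for the submodule part that the paper simply cites from \cite{DDP1}. The only cosmetic omission is that in the scalar-multiplication step you should note $\lambda\neq 0$ before invoking the Ore condition (the case $\lambda=0$ being trivial).
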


\begin{remark}\label{rmq:divisiblemodulesareinfinite} Any torsion free  non zero divisible $R$-module is infinite since it is a $D$-vector space.

Furthermore if $M$ is a non zero  divisible $R$-module then  for every $r \neq 0$, $M.r=M\neq 0$. Hence  $M_{tor}.r \neq 0$ when  $r\neq 0$ and $M_{tor}\neq 0$. In
particular, $M_{tor}$ is infinite if it is non zero, since otherwise, the least common multiple of annihilators of non zero elements of $M_{tor}$ would annihilate  $M_{tor}$.
\end{remark}

Recall that a module $I$ over a ring $S$ is said to be injective if for any given inclusion of $S$-modules $N \subseteq M$, and any homomorphism $f:N \to I$, $f$ admits an extension to $M$.

In the following  lemma we state that in the case of $R$-modules the notions of injectivity and divisibility coincide, and we also summarize well-known facts about injective modules that we will frequently use in the rest of this paper.

\begin{lem} \label{modinjective} Let $S$ be any ring.
\begin{enumerate}
\item A divisible module over a right principal ring is injective, and an injective module over a domain
 is divisible. In particular for every $ R $-module $ M $, $ M $ is divisible if and only if it is injective.

\item Any injective $S$-module is a direct summand in every $S$-module which contains it.
\item  Any $S$-module $M$ has an extension $ N \supseteq M$ maximal with the property $$ X \cap M \neq 0  \mesp \text{for any non zero submodule} \mesp X \subseteq N.$$ $N$ is called an injective hull of $M$ and is unique up to an $M$-isomorphism of $S$-modules.

\item Let $ N \supseteq M $ be $R$-modules. If $N$ is an injective hull of $ M $ and $ x \in N \setminus \{0 \} $, then there exists $ r \in R$ such that $x.r \in M \setminus \{0 \}$.
\end{enumerate}
\begin{proof}
We refer to \cite{robinson} (p. 156-164). The last assertion is  Exercise 3. page 164.
 \end{proof}
\end{lem}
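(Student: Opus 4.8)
The plan is to prove the four assertions in turn; the first rests on Baer's criterion and the remaining three on the notion of an \emph{essential extension} — an inclusion $A\subseteq B$ such that every nonzero submodule of $B$ meets $A$ nontrivially — so that (3) is exactly the existence and uniqueness of the injective hull and (4) is an immediate corollary of essentiality.

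For (1), I would invoke Baer's criterion: a right $S$-module $I$ is injective as soon as every $S$-homomorphism from a right ideal of $S$ into $I$ extends to $S$. If $S$ is right principal, a right ideal is $aS$; for $a\neq 0$ and $f\colon aS\to I$, divisibility produces $z\in I$ with $z.a=f(a)$, and then $s\mapsto z.s$ extends $f$ (since $z.(as)=(z.a).s=f(a).s=f(as)$), the case $a=0$ being trivial. Conversely, over a domain $S$, for any $x\in I$ and $0\neq r\in S$ the assignment $rs\mapsto x.s$ is a well-defined $S$-homomorphism $rS\to I$ (well-definedness uses the absence of zero divisors), and if $I$ is injective its extension $g\colon S\to I$ gives $y:=g(1)$ with $y.r=g(r)=x$; hence $I$ is divisible. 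Since $R$ is a right principal domain, both halves apply and yield the equivalence for $R$-modules.

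For (2), extend $\mathrm{id}_I$ along $I\hookrightarrow M$ to a retraction $\pi\colon M\to I$, so $M=I\oplus\ker\pi$. Underlying (3) is the characterization: a module is injective if and only if it has no proper essential extension. The easy direction uses (2)-style reasoning — if $I$ is injective and $I\subseteq N$ is essential, the retraction $\pi\colon N\to I$ has $\ker\pi\cap I=0$, hence $\ker\pi=0$ and $N=I$; the converse is obtained by embedding $N$ in an injective module $E$, choosing by Zorn a submodule $C\subseteq E$ maximal with $C\cap N=0$, checking that $(N+C)/C$ is essential in $E/C$, and concluding from ``no proper essential extension'' that $E=N\oplus C$, so $N$ is a direct summand of the injective $E$ and hence injective. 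Granting this, for (3) I embed $M$ in an injective module, take by Zorn a maximal essential extension $N$ of $M$ inside it (a union of a chain of essential extensions of $M$ is again one, and any essential extension of $M$ embeds over $M$ into $E$ and hence, by maximality, into $N$, so $N$ is an absolute maximal essential extension); such an $N$ admits no proper essential extension, hence is injective — an \emph{injective hull} of $M$. Uniqueness up to an isomorphism fixing $M$: given injective hulls $N_1,N_2$ of $M$, extend $M\hookrightarrow N_2$ to $h\colon N_1\to N_2$; essentiality of $M$ in $N_1$ forces $h$ injective, $h(N_1)\cong N_1$ is injective hence a direct summand $N_2=h(N_1)\oplus C$, and $C\cap M=0$ with $M$ essential in $N_2$ gives $C=0$, so $h$ is an isomorphism.

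Finally, (4) is immediate from (3): if $N$ is an injective hull of $M$ then $M$ is essential in $N$, so for $x\in N\setminus\{0\}$ the nonzero cyclic submodule $xR$ meets $M$ nontrivially, which gives $r\in R$ with $x.r\in M\setminus\{0\}$. The only genuinely non-elementary input is the existence of ``enough injectives'' over an arbitrary ring (every module embeds in an injective one) together with the set-theoretic care needed to make the Zorn's-lemma steps precise; these are classical and are precisely what is collected in \cite{robinson}, which I would cite rather than reprove.
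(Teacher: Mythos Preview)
Your argument is correct and is exactly the standard development one finds in the reference the paper cites: Baer's criterion for (1), the retraction trick for (2), essential extensions plus Zorn for (3), and (4) as a one-line consequence of essentiality. The paper itself gives no argument at all --- it simply defers to \cite{robinson} --- so there is nothing to compare at the level of strategy; you have written out precisely the textbook proof that the citation is standing in for, and you correctly flag that the one black box you still need (enough injectives over an arbitrary ring) is what the reference supplies.
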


\begin{defn}
For  $R$-modules  $M \subseteq N$, where $N$ is divisible, we call divisible closure of $M$  any  injective hull of $M$ inside $N$.
\end{defn}

\begin{remark}
\begin{enumerate}
\item Let $M\subseteq E \subseteq N$ be $R$-modules, with $E$ and $N$ divisible. Then $E$ is a divisible closure of $M$, if and only if for all $x \in E$ there exists a non zero $r \in R$ such that $x.r\in E$. Moreover note that
$E$ can be different from
$$\{x \in N \tq \exists  r\in R, \, r\neq 0 \mesp \& \mesp x.r \in M\}.$$

\item By   lemma \ref{modinjective}, divisible closures of an $R$-module are isomorphic as pure modules.  When we will consider  valued modules   we will see that the various divisible closures are generally  neither isomorphic nor even elementary equivalent as valued modules (see Proposition \ref{contre-exemple} and Corollary \ref{rem-contre-ex}).
\end{enumerate}
\end{remark}

The following observation is essential:

\begin{lem}
  \label{lemqx} Let $M$ be an $R$-module, $A \subseteq M$ a submodule of $M$ and $x
  \in M \setminus A$ such that $x.r \in A$ for some $r \in R \setminus \{0\}$.
  Then there exists $q \in R$ of minimal degree such that $x.q \in A$. For
  such a $q$ and for all $r$ such that $x.r \in A$, $q$ divides $r$.

  \begin{proof}
    Let $I:= \{r \in R \mesp | \mesp
     x.r \in A\}$; $I$ is a right ideal,  therefore  principal.
    Hence there is a generator of $I$, that we can take as $q$.
  \end{proof}
\end{lem}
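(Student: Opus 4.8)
The plan is to localise the problem at a single right ideal of $R$. Set
$$I := \{\, r \in R \tq x.r \in A \,\}.$$
First I would check that $I$ is a right ideal of $R$: it is closed under addition because $x.(r+r') = x.r + x.r'$ and $A$ is closed under addition, and it absorbs right multiplication because $x.(rs) = (x.r).s$ lies in $A$ as soon as $x.r \in A$, since $A$ is a \emph{submodule} of $M$ (not merely a subgroup). The hypothesis provides some nonzero $r$ with $x.r \in A$, so $I \neq \{0\}$.

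Since $R$ is right euclidean for $\deg$, it is right principal, so $I = qR$ for some $q \in R$, and $q \neq 0$ because $I \neq \{0\}$. By construction $x.q \in A$. For any $r$ with $x.r \in A$ we have $r \in I = qR$, i.e. $r = qs$ for some $s \in R$; this is exactly the right-divisibility relation ``$q$ divides $r$'' of the excerpt. Moreover $\deg(r) = \deg(q) + \deg(s) \geq \deg(q)$ because $\deg$ is additive on $R$, so $q$ has minimal degree among the nonzero elements of $I$; conversely any element of $I$ of minimal degree is a right multiple of $q$ of the same degree, hence equals $q$ times a unit, so it too generates $I$. Thus ``a $q$ of minimal degree with $x.q \in A$'' is precisely ``a generator of $I$'', and the divisibility statement holds for every such $q$.

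There is essentially no obstacle here: the only points requiring a word of care are that $I$ genuinely absorbs right multiplication --- which is where the module (as opposed to subgroup) hypothesis on $A$ enters --- and that additivity of $\deg$ is what converts ``generator of $I$'' into ``element of $I$ of least degree''. The lemma is the module-theoretic analogue of the elementary fact that, over a (right) euclidean domain, the set of ring elements carrying a fixed vector into a given submodule is generated by one element, of least degree.
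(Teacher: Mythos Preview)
Your proof is correct and follows exactly the same approach as the paper: you form the right ideal $I=\{r\in R\tq x.r\in A\}$, use right principality of $R$ to obtain a generator $q$, and then identify generators with elements of minimal degree. The paper's version is simply more terse, omitting the verifications (that $I$ is a right ideal, and the degree argument) which you have spelled out.
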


\begin{defn}
Any polynomial $q$ as above
 is called a minimal polynomial of $x$ over $A$. In
  particular, for all $x \in M_{tor} \setminus \{0\}$, there exists
  monic polynomials of minimal degree such that $x.q = 0$. It is called a
  minimal polynomial of $x$.
\end{defn}

\begin{remark}
A minimal polynomial of $x$ over $A$ divides any minimal polynomial of $x$. If
$x \in M\setminus A$ and $x.r \in A$  with $r$ irreducible then $r$ is a minimal polynomial
of $x$ over $A$. Minimal polynomials are unique  up to a unit.
\end{remark}
\begin{defn}
For any $r \in R$ and any $R$-module $M$,  the annihilator set of $r$ in $M$, is the
set $ann_M(r):=\{x \in M \tq x.r=0\}$ and its elements are called zeros or roots of $r$.
\end{defn}

\begin{nota} For an $R $-module $M $, let $\eta_M$ be the cardinal-valued function defined for all  $q\in R$,
 by $\eta_M (q) := \absolue{ann_M (q)} $.
\end{nota}
\begin{proposition}\label{torsionfini}
 Let $M $ and $N $ be divisible $R$-modules satisfying
$\eta_M  \leq  \eta_N $.  Suppose  $\eta_M(q)$ is finite for all non zero $q$ and $f:A\to N$ is an  $R$-module embedding where $A \subseteq M_{tor}$ is a submodule.  Then $f$ extends to an $R$-module embedding of  $M_{tor} $ into $N_{tor}$. In particular if $ ann_M (q)=ann_N (q)<\infty $ for all non zero $q \in R$,  then $M_{tor}$ and $N_{tor}$ are isomorphic.
\begin{proof}
Note that the range of $f$ is in $N_{tor}$. Let $x \in A\setminus M_{tor}$ and $q$ be a minimal polynomial of $x$ over $A$.
Write $q=q_1\dots q_k$ where the $q_i$ are irreducible. Since $q$ is a minimal polynomial of $x$ over $A$,
$z:=x.q_1\dots q_{k-1} \notin A$.  Set $r:=q_k$. Since $r$ is irreducible it is a minimal polynomial
of $z$ over $A$.

\noindent
{\it Claim} : There exists $y \in N_{tor}\setminus f(A)$ such that $y.r=f(z.r)$.

\noindent
{\it Proof of the Claim}.
Set $b:=f(z.r)$.
Since $N_{tor}$ is divisible it contains some $b_1$ such that $b_1.r=b$ Suppose $b_1 \in f(A)$.  Then $a_1:=f^{-1}(b_1) \in A$.
It follows that $(a_1-z).r=0$.  Note that $(a_1-z) \notin A$ and $r$ has the same number of zeros in $A$ and $f(A)$. Since $\eta_M(r)$ is finite and $\eta_M(r) \leq \eta_N(r) $ by hypothesis, there exists $b_0 \in N\setminus f(A)$ such that $b_0.r=0$. Now $b_0+b_1 \notin f(A)$ and $(b_0 +b_1).r=b$.$\dagger$

Take  such an $y$, then since $r$ is irreducible it is a minimal polynomial of $y$ over $A$. Now it is easy to check
that the map  $A+z.R \to f(A) + y.R$ sending $y$ to $z$ is an isomorphism of $R$-modules.

Using Zorn Lemma we can extend $f$ to $M_{tor}$.
\end{proof}
\end{proposition}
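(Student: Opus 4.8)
The plan is to prove the extension statement by a Zorn's lemma argument whose inductive step handles one irreducible polynomial at a time. First note that $f(A)\subseteq N_{tor}$: if $a\in A$ and $a.q=0$ with $q\neq 0$, then $f(a).q=f(a.q)=0$. Now consider the poset of pairs $(B,g)$ where $B$ is a submodule with $A\subseteq B\subseteq M_{tor}$ and $g\colon B\to N_{tor}$ is an $R$-embedding extending $f$, ordered by extension; the union of a chain of such pairs is again such a pair, so Zorn's lemma yields a maximal $(B,g)$, and it suffices to show $B=M_{tor}$.

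Suppose not and pick $x\in M_{tor}\setminus B$. Since $x$ is killed by some nonzero polynomial, $x.r_0\in B$ for some $r_0\neq 0$, so by Lemma~\ref{lemqx} there is a minimal polynomial $q$ of $x$ over $B$, and $\deg q\geq 1$ because $x\notin B$. Factor $q=q_1\cdots q_k$ into irreducibles using~\eqref{eqirr} and set $z:=x.(q_1\cdots q_{k-1})$ and $r:=q_k$. Minimality of $\deg q$ forces $z\notin B$, while $z.r=x.q\in B$; since $r$ is irreducible, $r$ is a minimal polynomial of $z$ over $B$. As $B+zR$ is a submodule of $M_{tor}$ properly containing $B$, it is enough to extend $g$ to $B+zR$ to contradict maximality — so the reduction has replaced $x$ by an element $z$ with an \emph{irreducible} minimal polynomial, which is exactly what makes the root count below work, and this is the crux of the argument.

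The key claim is then: there exists $y\in N_{tor}\setminus g(B)$ with $y.r=g(z.r)$. To see it, use that $N_{tor}$ is divisible (Lemma~\ref{Mtordivisible}) to pick $b_1\in N_{tor}$ with $b_1.r=g(z.r)$; if $b_1\notin g(B)$ take $y=b_1$. Otherwise $b_1=g(a_1)$ with $a_1\in B$, and $g(a_1.r)=g(z.r)$ forces $a_1.r=z.r$, so $0\neq a_1-z\in ann_M(r)$ and $a_1-z\notin B$. Hence $ann_M(r)\cap B\subsetneq ann_M(r)$; since $g$ restricts to a bijection $ann_M(r)\cap B\to ann_N(r)\cap g(B)$ and $\eta_M(r)$ is finite with $\eta_M(r)\leq\eta_N(r)$, we get $|ann_N(r)\cap g(B)|=|ann_M(r)\cap B|<|ann_M(r)|\leq|ann_N(r)|$, so there is $b_0\in ann_N(r)\setminus g(B)$, and $y:=b_0+b_1$ works. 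Granting the claim, $r$ is a minimal polynomial of $y$ over $g(B)$ (irreducible, with $y.r\in g(B)$ but $y\notin g(B)$), so $b+z.s\mapsto g(b)+y.s$ defines an $R$-embedding $B+zR\to N_{tor}$ extending $g$: well-definedness and injectivity both reduce to the fact that $z.s\in B\iff r\mid s\iff y.s\in g(B)$, with $g(z.s)=y.s$ in that case. This contradicts maximality, so $B=M_{tor}$ and $g\colon M_{tor}\to N_{tor}$ is the desired embedding.

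For the ``in particular'' part, when $ann_M(q)=ann_N(q)<\infty$ for all $q\neq 0$ the hypotheses are symmetric in $M$ and $N$, so I would run the argument back-and-forth: take a maximal triple $(B,g,B')$ with $0\subseteq B\subseteq M_{tor}$, $g\colon B\to B'\subseteq N_{tor}$ an isomorphism, alternately applying the one-step extension on the $M$-side and (by symmetry, on $g^{-1}$) on the $N$-side; maximality then forces $B=M_{tor}$ and $B'=N_{tor}$, so $M_{tor}\cong N_{tor}$. Alternatively, an embedding $M_{tor}\hookrightarrow N_{tor}$ exhibits $M_{tor}$ — injective by Lemma~\ref{modinjective} — as a direct summand $N_{tor}=g(M_{tor})\oplus C$, and comparing $|ann(q)|$ shows $C$ is torsion-free, hence $0$. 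The only real obstacle is the key claim in the third paragraph, i.e. recognizing that one must first peel off irreducible factors so that $ann_M(r)$ is the controllable finite set from which the ``missing'' root in $g(B)$ can be supplied out of $ann_N(r)$.
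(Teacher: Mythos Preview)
Your proof is correct and follows essentially the same route as the paper: reduce via Zorn's lemma to a one-step extension, peel off irreducible factors of the minimal polynomial to get an element $z$ with irreducible minimal polynomial $r$ over the current domain, and then use the finiteness of $\eta_M(r)$ together with $\eta_M(r)\leq\eta_N(r)$ to find a preimage $y\in N_{tor}\setminus g(B)$ of $g(z.r)$. Your treatment of the ``in particular'' clause (via the direct-summand argument using injectivity of $M_{tor}$) is more explicit than the paper's, which simply states the conclusion; this is a welcome addition rather than a deviation.
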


%\begin{corr} [of lemmas \ref{modinjective} and \ref{Mtordivisible}] \label{thM=thM_tor}
%The torsion submodule of a divisible $ R $-module is a direct summand and it is infinite if not $0$. As a consequence,  for a non zero $R$-module $ A $, $A \equiv A_{tor}$.
%\begin{proof}
%By lemma \ref{Mtordivisible}  the torsion submodule of an $R$-module $A$ is divisible, therefore it is injective by lemma \ref{modinjective}. Hence  it is a direct summand. Thus  one has $ A = A_{tor} \oplus A '$. In this case $ A '$ divisible and torsion free hence a $ D $-vector space. So they are infinite because $ R $ is infinite. Therefore they are elementary equivalent. The conclusion  follows by the Feferman-Vaught theorem (see e.g. \cite{hodges}  Section 9.6).
%\end{proof}
%\end{corr}

\subsection{Complete  theories of divisible $ R $-modules}
Let $S$ be a ring. Let  $L_{Mod}(S):=\{0,+,-\}\cup\{.s \tq s\in S\}$ be the language of (right-)$S$-modules. An equation  is an $L_{Mod}(S)$-formula
$\gamma(x_1,\dots,x_m)$ of the form
$$\gamma(\overline{x}):x_1.r_1 + \dots + x_m.r_m=0.$$ A primitive positive
(p.p.) formula $\phi(x_1,\dots,x_m)$ of the language of $S$-modules is a
formula of the form $$\exists \overline{y} \mesp
\left(\gamma_1(\overline{x},\overline{y})\wedge\dots\wedge\gamma_n
(\overline{x},\overline{y})\right)$$ where the $\gamma_i$ are equations. For instance,
by choosing $\gamma_1(x,y)= x.1 + (-y).q=0$ and $\gamma_2(x,y)=x.r + y.0=0$, the formula  $\phi(x):\exists y\mesp (y.q=x\wedge x.r=0)$ is a p.p. formula. Note also that a p.p.
formula can be written as  $$\exists y_1, \exists y_2, \dots \exists y_k \mesp
(y_1, \dots, y_k).A=(x_1,\dots x_n).B $$
where $A$ and $B$ are matrices with coefficients in $S$. Such formulas define subgroups, called p.p. subgroups.
It is a well known fact that modulo a complete  theory of $S$-modules  every formula is equivalent to a boolean combination of p.p. formulas.  See \cite{prest} for more details. Moreover:

\begin{proposition}\label{ppindexs} The complete theory of a $S$-module
$\langle M,+,-, 0, \{.s\}_{s \in S}\rangle$ is given by the p.p. indexes  (these are
 cardinalities in $\mathbb{N}\cup \{ \infty \}$ of relative quotients of p.p.  subgroups  of $M$).
\end{proposition}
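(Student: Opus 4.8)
This is the classical Baur--Monk theorem, and the plan is to follow its standard proof: feed the p.p.\ elimination of quantifiers recalled above into B.H.~Neumann's lemma on coverings of a group by cosets. One direction I would dispatch immediately: for p.p.\ formulas $\psi(\bar x)\to\phi(\bar x)$ and $m\in\Nn$, the assertion ``$[\phi(M):\psi(M)]\ge m$'' is itself expressible by the $L_{Mod}(S)$-sentence $\exists\bar x_1\cdots\exists\bar x_m\big(\bigwedge_{i}\phi(\bar x_i)\wedge\bigwedge_{i<j}\neg\psi(\bar x_i-\bar x_j)\big)$, so each p.p.\ index is an invariant of $\mathrm{Th}(M)$. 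The real content is the converse, namely that $\mathrm{Th}(M)$ is \emph{determined} by these indices.

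For that I would first upgrade the stated elimination to its uniform form: modulo the theory of \emph{all} $S$-modules, every formula is equivalent to a boolean combination of p.p.\ formulas and of \emph{invariant sentences} of the kind displayed above. This refinement is essential, since relative to one fixed complete theory every p.p.\ \emph{sentence} is trivially true ($\bar 0$ is a witness), so the elimination as literally stated carries no information about which theory one is in. I expect this upgrade to be the main obstacle. It is proved by induction on the number of quantifiers, the crux being the elimination of a single $\exists y$ from $\exists y\,\theta(\bar x,y)$ with $\theta$ a boolean combination of p.p.\ formulas. Passing to disjunctive normal form, it suffices to treat a single disjunct $\alpha(\bar x,y)\wedge\bigwedge_{j}\neg\beta_j(\bar x,y)$ with $\alpha,\beta_j$ p.p.; for fixed $\bar x$ the set $\{y:\alpha(\bar x,y)\}$ is, when non-empty, a coset of the p.p.-definable subgroup $H:=\{y:\alpha(\bar 0,y)\}$ of $M$, and each $\{y:\beta_j(\bar x,y)\}$ a coset of $K_j:=\{y:\beta_j(\bar 0,y)\}$. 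By Neumann's lemma such a coset of $H$ is covered by the cosets of the $K_j$ iff it is already covered using only those $j$ with $[H:H\cap K_j]<\infty$, and for those finitely many indices the (non-)covering inside $H$ is a boolean condition in the finite indices $[H:\bigcap_{j\in I}(H\cap K_j)]$ (via inclusion--exclusion), together with p.p.\ conditions on $\bar x$ recording the non-emptiness of the various intersections of the cosets involved. Collecting these and iterating over the quantifiers yields the upgraded elimination.

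Granting it, the conclusion is short: every $L_{Mod}(S)$-sentence is, modulo the theory of all $S$-modules, a boolean combination of p.p.\ sentences (identically true) and invariant sentences, and the truth in $M$ of ``$[\phi(M):\psi(M)]\ge k$'' depends only on the cardinality of the relative quotient $\phi(M)/\psi(M)$, compared with $k$. Hence $\mathrm{Th}(M)$ is completely determined by the p.p.\ indices of $M$, which together with the first direction is the assertion. The only inputs that are not pure bookkeeping are Neumann's lemma and, in order to obtain the statement verbatim with p.p.\ subgroups of $M$ itself rather than of its finite powers, the standard reduction of multivariable p.p.-pair invariants to single-variable ones.
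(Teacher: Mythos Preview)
The paper does not give a proof of this proposition: it is stated as a well-known fact in the model theory of modules, with an implicit reference to \cite{prest} (see the sentence immediately preceding the proposition). Your proposal is the standard proof of the Baur--Monk theorem---p.p.\ elimination combined with B.~H.~Neumann's lemma on coset coverings, yielding the uniform elimination down to invariant sentences---and it is correct; there is nothing to compare it against in the paper itself.
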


In our case we have a simpler description of p.p. definable sets.
\begin{proposition} \label{pp2atom}
Modulo the theory of divisible $ R $-modules, any  p.p. formula is equivalent  to a conjunction of atomic formulas.
Consequently,  a p.p. formula with only one free variable is equivalent to one of the form $x.q=0$ for some $q \in R$.
\begin{proof}
Let $ \phi (x_1, \dots, x_n):= \exists y_1, \dots y_k \mesp \overline{y}. A = \overline{x}. B $ be a p.p. formula. By Proposition 6.1 of \cite{DDP1}
there exist invertible matrices $ P $ and $ Q $, where $ P $ has coefficients in $ \{0,1 \} $, such that
$ A': = PAQ $ is of the form $ (A_1, 0) $, where $ A_1 $ is a $ k \times l $ lower triangular matrix in which each coefficient on the diagonal is non zero\footnote{By definition a lower triangular matrix $A$ (possibly not square) is  such that the coefficient $a_{ij}$ is $0$ whenever $j>i$, and its diagonal is the sequence $(a_{ii})$.}  and $ 0 $ is $k \times (m-l) $ zero matrix. Thus $ \phi $ is equivalent to the formula:
$$\exists   \overline{y} \mesp \overline{y}. PAQ = \overline{x}. BQ. $$
So, we can replace $ A $  by $ A '$ and $ B $ by $ BQ$ and  assume that $ A $ is of the form
$ (A_1, 0) $, as described above. It follows that $ \phi $ is of the form $ \phi_1 (\overline{x}) \wedge \phi_2 (\overline{x}) $, where $\phi_1$  can be written as
  $$ \exists \overline{y} \mesp \overline{y}. A_1 = \left (\sum_{i = 1}^n x_i.b_{i, 1}, \dots, \sum_{i = 1}^n x_i.b_{l, l} \right)$$
  and $ \phi_2 $ as
$$\left (\sum_{i = 1}^n x_i.b_{i, l +1}, \dots, \sum_{i = 1}^n x_i.b_{i, m} \right) = (0, \dots, 0). $$
Since the diagonal coefficients of $ A_1 $ are all non-zero, any divisible $R$-module  satisfy $\forall \overline{x} \, \ \phi_1 (\overline{x}) $. Hence
$ \phi $ is equivalent to $\phi_2$. In particular, if $ x $ is a single variable, we have: $$ \phi (x) \iffl \bigwedge_{j = 1}^m x.b_{1, j} = 0 \iffl x.\text{gcd} (b_{1, 1}, \dots, b_{1, m}) = 0 .$$
\end{proof}
\end{proposition}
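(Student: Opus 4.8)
The plan is to start from the general shape of a p.p.\ formula, $\phi(\overline{x}) := \exists \overline{y}\mesp \overline{y}.A = \overline{x}.B$ with $A$ a $k\times m$ matrix and $B$ an $n\times m$ matrix over $R$, and to bring $A$ into a convenient normal form using operations that do not change the set defined by $\phi$. There are exactly two such operations. First, replacing $A$ by $PA$ for an invertible $k\times k$ matrix $P$ over $R$ only reparametrises the quantified tuple, since $\overline{y}.A = (\overline{y}P^{-1}).(PA)$ and $\overline{y}\mapsto \overline{y}P^{-1}$ is a bijection of $k$-tuples. Second, replacing $(A,B)$ by $(AQ,BQ)$ for an invertible $m\times m$ matrix $Q$ amounts to right-multiplying both sides of the equation. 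Hence $\phi$ is equivalent to $\exists \overline{y}\mesp \overline{y}.PAQ = \overline{x}.BQ$ for any such $P,Q$. Because $R$ is right Euclidean, hence right principal, I would invoke the matrix normal form over such rings (Proposition~6.1 of \cite{DDP1}) to choose $P$ (one can even take its entries in $\{0,1\}$) and $Q$ so that $PAQ = (A_1,0)$, where $A_1$ is a $k\times l$ lower triangular matrix all of whose diagonal entries are non-zero. If that statement were unavailable I would reprove it by running the Euclidean algorithm on the columns of $A$. This normalisation is the one genuinely algebraic ingredient, and the expected obstacle: its subtlety is that $R$ is only \emph{one-sidedly} Euclidean, so one must keep careful track of which side each elementary operation acts on.

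Granting the normal form, I would write $BQ = (B_1,B_2)$ with $B_1$ of width $l$, so that $\overline{y}.(A_1,0) = \overline{x}.(B_1,B_2)$ decouples into $\overline{y}.A_1 = \overline{x}.B_1$ together with $\overline{x}.B_2 = 0$, the last $m-l$ coordinates of the left-hand side being zero. Thus $\phi$ is equivalent to $\phi_1(\overline{x})\wedge\phi_2(\overline{x})$ with $\phi_1(\overline{x}) : \exists \overline{y}\mesp \overline{y}.A_1 = \overline{x}.B_1$ and $\phi_2(\overline{x}) : \overline{x}.B_2 = 0$. The conjunct $\phi_2$ is a conjunction of equations by construction, so it remains to check that $\phi_1$ holds of every $\overline{x}$ in every divisible $R$-module. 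For this, fix $\overline{x}$, put $(c_1,\dots,c_l) := \overline{x}.B_1$, set $y_{l+1} = \dots = y_k = 0$, and solve for $y_l, y_{l-1},\dots, y_1$ successively: by lower-triangularity the $j$-th coordinate equation reads $y_j.(A_1)_{jj} = c_j - \sum_{i>j} y_i.(A_1)_{ij}$, whose right-hand side is already determined at that stage, and since $(A_1)_{jj}\neq 0$ divisibility supplies a value for $y_j$. Hence $\phi \iffl \phi_2$, a conjunction of atomic formulas, which is the first assertion.

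For the last statement I would specialise to $n = 1$: then $\phi_2$, and hence $\phi$, is equivalent to $\bigwedge_j x.b_j = 0$ for the finitely many entries $b_j\in R$ of the single row $B_2$. Taking $g := \gcd(b_1,\dots)$, the monic generator of the right ideal $\sum_j b_jR$ (principal since $R$ is right principal), one has $x.g = 0$ if and only if $x.b_j = 0$ for all $j$: each $b_j$ lies in $gR$, and conversely if all the $x.b_j$ vanish then $x.r = 0$ for every $r\in\sum_j b_jR = gR$, in particular for $r=g$. So $\phi(x) \iffl x.g = 0$, an atomic formula. All told, the only step that demands real work is the matrix normalisation at the start; once $A$ is in block lower-triangular form, everything else is routine bookkeeping with one-sided divisibility.
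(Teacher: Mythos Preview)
Your proof is correct and follows essentially the same route as the paper's: you invoke the same matrix normal form (Proposition~6.1 of \cite{DDP1}) to bring $A$ to $(A_1,0)$, split $\phi$ into $\phi_1\wedge\phi_2$, use divisibility to discharge $\phi_1$, and finish the one-variable case via the $\gcd$. Your exposition is somewhat more explicit than the paper's---in particular your justification of why left/right multiplication by invertible matrices preserves the defined set, the back-substitution solving $\overline{y}.A_1 = \overline{x}.B_1$, and the verification that $x.\gcd(b_j)=0 \iff \bigwedge_j x.b_j=0$---but the underlying argument is identical.
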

\begin{corr}\label{thm=thM_tor}
Completions of the theory of divisible non zero  $R$-modules admit elimination of quantifiers. In addition if $M$ is a divisible $R$-module such that $M_{tor} \neq 0$ then  $M_{tor}$ is an elementary substructure of $M$.
\end{corr}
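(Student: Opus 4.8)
The plan is to derive both assertions from Proposition~\ref{pp2atom} together with the model-theoretic characterization of elementary equivalence and elementary substructure for modules via p.p. formulas. For the first assertion, recall that modulo any complete theory of $R$-modules every formula is a boolean combination of p.p. formulas (the Baur--Monsky--Garavaglia theorem, cited before Proposition~\ref{ppindexs}). By Proposition~\ref{pp2atom}, modulo the theory of divisible $R$-modules every p.p. formula is equivalent to a conjunction of atomic formulas. Hence every formula is, modulo this theory, a boolean combination of atomic formulas; in particular every formula is equivalent to a quantifier-free formula, which is precisely elimination of quantifiers. One should check that this passes to completions: since each completion extends the theory of divisible $R$-modules, the same reduction applies verbatim, and the presence of a nonzero module guarantees the reduction is non-vacuous.

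For the second assertion, let $M$ be a divisible $R$-module with $M_{tor}\neq 0$. First I would recall from Lemma~\ref{Mtordivisible} that $M_{tor}$ is itself a divisible submodule, hence a model of the same theory. To show $M_{tor}\preceq M$ it suffices, by the Tarski--Vaught test and the quantifier elimination just established, to show that $M_{tor}$ and $M$ agree on atomic (equivalently, p.p.) formulas with parameters from $M_{tor}$, and that existential p.p.-formulas realized in $M$ over $M_{tor}$ are already realized in $M_{tor}$. Concretely: given $\bar a \in M_{tor}$ and a p.p. formula $\exists \bar y\, (\bar y.A = (\bar x).B)$ witnessed in $M$ at $\bar x = \bar a$, I want a witness in $M_{tor}$. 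By the normal-form argument inside the proof of Proposition~\ref{pp2atom}, such a formula reduces to a conjunction of equations of the shape ``$\bar y.A_1 = $ (linear combination of the $a_i$)'' with $A_1$ lower-triangular with nonzero diagonal, plus equations imposed purely on the $a_i$. The latter hold in $M_{tor}$ because $M_{tor}\subseteq M$; for the former, solve the triangular system recursively: at each stage one needs an element $c$ of the module with $c.d = e$ for a given $d\neq 0$ in $R$ and $e$ a combination of torsion elements, and divisibility of $M_{tor}$ supplies such a $c$ inside $M_{tor}$.

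The main obstacle I expect is the bookkeeping in this last triangular back-substitution, specifically verifying that at every step the right-hand side $e$ lies in $M_{tor}$ (so that divisibility of $M_{tor}$, rather than merely of $M$, applies) and that one may choose the witnesses to be torsion. Since $\bar a$ and all previously chosen $y_i$ are torsion and $R$ is Ore, any $R$-linear combination of them is again torsion (a common nonzero right multiple annihilates the combination), so each $e$ is indeed in $M_{tor}$ and divisibility of $M_{tor}$ yields a torsion witness; this closes the induction. One small point worth spelling out is that the triangular reduction from Proposition~\ref{pp2atom} was stated for the module in the abstract, but its matrix manipulations (multiplying $A$ and $B$ by invertible matrices over $R$) are purely formal and apply to the formula, not the model, so they are available here without change.
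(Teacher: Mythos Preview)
Your treatment of the first assertion is correct and matches the paper's: Baur--Monk reduces every formula to a Boolean combination of p.p.\ formulas modulo a complete theory, and Proposition~\ref{pp2atom} reduces p.p.\ formulas to conjunctions of atomic ones modulo $T_d$, giving quantifier elimination for each completion.

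For the second assertion there is a genuine gap. What your triangular back-substitution establishes is \emph{purity} of $M_{tor}$ in $M$: every p.p.\ formula with parameters in $M_{tor}$ realized in $M$ is realized in $M_{tor}$. (In fact this is immediate without the back-substitution: by Proposition~\ref{pp2atom} p.p.\ formulas are equivalent, modulo $T_d$, to conjunctions of atomic formulas, and those are automatically preserved in substructures; equivalently, $M_{tor}$ is divisible, hence injective, hence a direct summand, hence pure.) But purity is not what Tarski--Vaught asks for. The test requires that for \emph{every} formula $\phi(\bar x,y)$ and $\bar a\in M_{tor}$, a witness in $M$ for $\exists y\,\phi(\bar a,y)$ can be found in $M_{tor}$. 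Even after using QE in $Th(M)$ to make $\phi$ quantifier-free, $\phi$ is a Boolean combination of atomic formulas and may contain negations; your argument handles only the positive (p.p.) case. Concretely, you have not shown that a purely negative condition $\bigwedge_k y.r_k\neq c_k$ with $r_k\neq 0$ and $c_k\in M_{tor}$, satisfiable in $M$, is satisfiable in $M_{tor}$. Since QE is only available modulo the \emph{complete} theory $Th(M)$, the reduction ``QE $+$ substructure $\Rightarrow$ elementary'' would require knowing $M_{tor}\models Th(M)$, which you have not established.

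The missing ingredient is exactly what the paper supplies: one checks $M_{tor}\equiv M$ by comparing p.p.\ indices. By Proposition~\ref{pp2atom} the relevant indices are $[ann_M(q):ann_M(s)]$; when $q,s\neq 0$ both annihilators lie in $M_{tor}$ and there is nothing to do, and the only nontrivial case is $q=0$, $s\neq 0$, where one must see that $M_{tor}/ann_M(s)$ is infinite. This is the step using Remark~\ref{rmq:divisiblemodulesareinfinite}, and it is precisely what would be needed to dispatch the negative case in your Tarski--Vaught argument (via Neumann's lemma, say). Once $M_{tor}\equiv M$ is known, $M_{tor}\preceq M$ follows either from Sabbagh's theorem (pure $+$ elementarily equivalent $\Rightarrow$ elementary), as in the paper, or directly from QE for the common complete theory.
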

\begin{proof}
The first assertion follows directly from the above proposition and the fact that modulo a complete theory of $R$-modules, any formula is equivalent to a boolean combination of p.p. formulas.

For the second assertion, first observe that $M_{tor}$ is a pure submodule of $M$ since it  is divisible (lemma \ref{Mtordivisible}). Thus, by \cite{sabbagh},  it is enough by  to  show that $M_{tor}$ and $M$ are elementary equivalent. To obtain this we will show that
for p.p. formulas $\phi(x)$ and $\psi(x)$  with $\phi(M) \subseteq \psi(M)$,
we have $\psi(M)/\phi(M) = \psi(M_{tor})/\phi(M_{tor})$. By the above proposition $\phi(x)$ and $\psi(x)$ are respectively equivalent to
$x.r=0$ and $x.q=0$ for some $r,q \in R$. If $r$ and $q$ are both non zeros, then there is nothing to show. If $r=0$ then $\phi(M)=M=\psi(M)$    necessarily. Hence it is enough to consider the non trivial case where $r \neq 0$ \& $q=0$. But $M_{tor}/\phi(M_{tor})$ is infinite in this case. In fact, if not, then choosing a set of representatives $x_1, \dots, x_n$ of $M$ modulo $\phi(M)$ and non zero polynomials $r_1,\dots ,r_n$ such that $x_i.r_i=0$, we have $M_{tor}.\lcm(r,r_1,\dots,r_n) = 0$.  This is impossible by Remark \ref{rmq:divisiblemodulesareinfinite}.
\end{proof}

To prove Theorem \ref{completions} we will study relative quotients of p.p. subgroups of a  divisible $R$-module $M$. They are of the form  $ann_M(r)/ ann_M(s)$ by  elimination of quantifiers.

Set $K_0:=\{x \in K \tq \vfi(x)=x\}$. Remark that every annihilator
set $ann_M(r)$ is a $K_0$-vector space. Until the end of this section, the symbol ``$\simeq$'' denotes isomorphism
between  $K_0$-vector spaces. The following lemmas establish a
generalization of lemmas 2.9 and 2.10 from {\cite{HP}} where it is assumed that $\varphi$ is onto, which means $R$ is both right and left euclidean.

\begin{lem} \label{TRdannpremier}
 Let $M$ be a divisible $R$-module. For all $0 \neq q = q_1 \dots q_k \in R $, $$\absolue{ann_M (q)} = \prod_{i = 1}^k \absolue{ann_M (q_i)}, $$
where $\absolue{ann_M(q_i)} $ is the cardinality of $ann_M(q_i)$.
\begin{proof}
If $q=sr$ then multiplication by $s$ induces an one to one $K_0$-linear transformation
$$ ann_M (q) / ann_M (s)  \longrightarrow ann_M (r)$$ which is in fact also onto by divisibility of $M$.
The result  clearly follows by induction on $k$.
\end{proof}
\end{lem}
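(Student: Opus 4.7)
The plan is to first handle the two-factor case and then to induct on $k$. So I would fix a factorisation $q=sr$ (taking $s=q_1$ and $r=q_2\cdots q_k$) and consider the right-multiplication map
$$\sigma_s : ann_M(q) \longrightarrow ann_M(r), \qquad x \longmapsto x\cdot s.$$
This is well-defined because $x\cdot q=0$ forces $(x\cdot s)\cdot r=0$, and its kernel is by definition $ann_M(s)$. Next I would verify $K_0$-linearity: since $\lambda t = t\varphi(\lambda)=t\lambda$ for any $\lambda\in K_0$ and since $K$ is commutative, every element of $K_0$ is central in $R$. Hence $ann_M(s)$ and $ann_M(r)$ are indeed $K_0$-subspaces of $M$ and $\sigma_s$ is $K_0$-linear.

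The only real content of the statement is surjectivity, and this is exactly where divisibility of $M$ enters: given $y\in ann_M(r)$, divisibility produces some $x\in M$ with $x\cdot s=y$, and then $x\cdot q=(x\cdot s)\cdot r=y\cdot r=0$, so $x\in ann_M(q)$ lifts $y$. Thus $\sigma_s$ induces a $K_0$-linear isomorphism
$$ann_M(q)/ann_M(s)\;\simeq\; ann_M(r),$$
and the cardinal identity
$$\absolue{ann_M(q)} \;=\; \absolue{ann_M(s)}\cdot\absolue{ann_M(r)}$$
follows (valid whether the annihilators are finite or infinite, since $|A|=|A/B|\cdot|B|$ for any subgroup $B\leq A$).

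To conclude, I would induct on $k$: applying the two-factor case to $q=q_1\cdot(q_2\cdots q_k)$ and the inductive hypothesis to the product $q_2\cdots q_k$ gives $\absolue{ann_M(q)}=\prod_{i=1}^k\absolue{ann_M(q_i)}$. I do not anticipate any serious obstacle here; the whole argument is a one-line short-exact-sequence computation, and divisibility is precisely the ingredient that upgrades the obvious injection $ann_M(q)/ann_M(s)\hookrightarrow ann_M(r)$ to an isomorphism. Without divisibility one would only recover the inequality $\absolue{ann_M(q)}\leq\absolue{ann_M(s)}\cdot\absolue{ann_M(r)}$, so the hypothesis is being used optimally.
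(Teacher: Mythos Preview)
Your argument is correct and follows exactly the same route as the paper: the multiplication-by-$s$ map gives a $K_0$-linear isomorphism $ann_M(q)/ann_M(s)\simeq ann_M(r)$, with surjectivity coming from divisibility, and then one inducts on $k$. You simply spell out the details (centrality of $K_0$, the surjectivity step) that the paper leaves implicit.
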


\begin{lem}\label{annq_sur_ann_s} Let $T_0$ be a complete theory of non zero divisible $R$-modules. For all $s, q \in R \setminus
\{0\}$ such that $T_0 \models \forall x \, (x.s=0 \rightarrow x.q=0)$, there
exists $r \in R$ such that $$\left( ann_N (q) / ann_N (s) \right) \simeq
ann_N (r)$$ for all $N \models T_0$.

\begin{proof} Let $s$ and $q$ be as in the statement of the lemma. If $s$ divides $q$ then the result follows by the proof of the above lemma. Also if $ann_N (s)=0$ the assertion is trivial. Hence we assume that $ann_N (s)\neq 0$ for a model (hence for all models) $N$ of $T_0$. If $s$ is irreducible, then
$s$ is (up to a unit) the minimal polynomial of a non zero root of $q$ hence divides $q$. Now, consider the general case. We proceed by induction:
Suppose that for all non zero polynomials  $h,g$ such that
$ann_N (h) \supseteq ann_N (g) \neq 0$ and $g$ can be written as a product of $n-1$ irreducible polynomials, there exist $h',g',r'$ such that $g'=h'r'$, and
$$ann_N (g) / ann_N (h) \simeq
ann_N (g') / ann_N(h') \simeq ann_N(r')$$ for all $N \models T_0$.
  Consider the case $s=s_1\ldots s_n$ and set $s' = s_2 \ldots s_n$. Since $ann_N (s) \neq 0$, $s_1$
or $s'$ has a non-zero root in $N$.\\
{\bf Case 1 :} $ann_N (s_1) \neq 0$.  First, $s_1$ divides $q$ since it is irreducible and hence is -up to a unit- the minimal polynomial of any non zero $x \in
ann_N(s_1)\subseteq ann_N(q)$. Thus we have $q = s_1r$ and, by the fact
that $N$ is divisible, $ann_N (s') \subseteq ann_N (r)$ for some $r \in R$. If
$ann_N (s') = 0$ then we have: $ann_N (q) / ann_N (s) \simeq
ann_N (q) / ann_N (s_1) \simeq ann_N
(r)$. Otherwise, by induction hypothesis there are $s'_t$ and $r'$ such
that $s'_t $ divides $r'$ and
$$ann_N (r') / ann_N (s'_t) \simeq
   ann_N (r) / ann_N (s') .$$
Now the map ``$x \mapsto x.s_1$ mod $ann_N (s')$" establishes a
morphism of $K_0$-vector spaces from $ann_N (q)$ onto $ann_N (r) /
ann_N (s')$ with kernel $ann_N (s)$.\\
{\bf Case 2 :} $ann_N (s_1) = 0$. Then, $ann_N (s') \neq 0$,
$$ann_N (s) .s_1 =
ann_N (s') \subseteq ann_N (q) .s_1$$ and the
action of $s_1$ induces an isomorphism $K_0$-vector spaces: $$ann_N (q) \to
ann_N (q).s_1 \quad .$$ Therefore $$ann_N (q) /
ann_N (s) \simeq ann_N (q).s_1 /
ann_N (s').$$ By lemma  \ref{pp2atom} the p.p. formula $\exists y\mesp  y.s_1 = x
\wedge y.q = 0$, defining $ann_N (q) .s_1$, is equivalent to a formula of the
form: $x.q_1 = 0$ for some $q_1 \in R$. By applying the induction
hypothesis to $(s', q_1)$ we get $r$ as required.

It is clear from the proof that $r$ does only depend on $T_0$, $s$ and $q$.
\end{proof}
\end{lem}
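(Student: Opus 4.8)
The plan is to argue by induction on the number $n$ of irreducible factors in a fixed factorization $s=s_{1}\cdots s_{n}$ of $s$, proving the slightly sharper statement that the required $r$ can be chosen depending only on $T_{0}$, $s$ and $q$ (so that the same $r$ works in every model of $T_0$), each step reducing to a pair whose ``denominator'' has fewer irreducible factors. Two easy reductions, both uniform across models of $T_{0}$ since ``$ann_{N}(s)=0$'' and ``$ann_{N}(s_{1})=0$'' are expressed by first-order sentences and $T_{0}$ is complete, serve as base cases: if $ann_{N}(s)=0$ take $r:=q$; and if $s$ divides $q$, say $q=sr$, then multiplication by $s$ induces, by divisibility of $N$ and exactly as in the proof of Lemma~\ref{TRdannpremier}, a $K_{0}$-linear isomorphism $ann_{N}(q)/ann_{N}(s)\to ann_{N}(r)$. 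This second reduction in particular settles the case $s$ irreducible with $ann_{N}(s)\neq 0$: a nonzero root of $s$ then has $s$ (up to a unit) as its minimal polynomial by the remark after Lemma~\ref{lemqx}, and since it is also a root of $q$, that minimal polynomial divides $q$.

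For the inductive step, write $s=s_{1}s'$ with $s'=s_{2}\cdots s_{n}$, and assume $ann_{N}(s)\neq0$ (and, by the above, $s\nmid q$). By Lemma~\ref{TRdannpremier}, $\absolue{ann_{N}(s)}=\absolue{ann_{N}(s_{1})}\cdot\absolue{ann_{N}(s')}$, so at least one of $ann_{N}(s_{1})$, $ann_{N}(s')$ is nonzero. Suppose first $ann_{N}(s_{1})\neq0$. As above $s_{1}$ divides $q$, say $q=s_{1}r$. Multiplication by $s_{1}$ maps $ann_{N}(q)$ onto $ann_{N}(r)$ with kernel $ann_{N}(s_{1})$ and carries $ann_{N}(s)$ exactly onto $ann_{N}(s')$; the two surjectivities and the computation of images use that $N$ is divisible, and one also checks $ann_{N}(s')\subseteq ann_{N}(r)$ directly. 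Hence it induces a $K_{0}$-linear isomorphism $ann_{N}(q)/ann_{N}(s)\simeq ann_{N}(r)/ann_{N}(s')$, and since $s'$ is a product of $n-1$ irreducibles with $ann_{N}(s')\subseteq ann_{N}(r)$, the induction hypothesis applied to the pair $(s',r)$ (or the base reduction if $ann_{N}(s')=0$) produces the desired $r$.

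It remains to treat $ann_{N}(s_{1})=0$, whence $ann_{N}(s')\neq 0$ and multiplication by $s_{1}$ is injective on $N$. It carries $ann_{N}(s)$ isomorphically onto $ann_{N}(s')$, and $ann_{N}(q)$ isomorphically onto the set $\{y\in N\tq \exists x\,(x.s_{1}=y\wedge x.q=0)\}$, which is defined by a primitive positive formula in the single variable $y$ and hence, by Proposition~\ref{pp2atom}, equals $ann_{N}(q_{1})$ for some $q_{1}\in R$ depending only on $s_{1}$ and $q$. Since $ann_{N}(s)\subseteq ann_{N}(q)$, pushing forward through $s_{1}$ gives $ann_{N}(s')\subseteq ann_{N}(q_{1})$ and $ann_{N}(q)/ann_{N}(s)\simeq ann_{N}(q_{1})/ann_{N}(s')$; as $s'$ has $n-1$ irreducible factors, the induction hypothesis applied to $(s',q_{1})$ completes the argument. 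At each stage the polynomial we pass to ($r$ from $q=s_{1}r$, or $q_{1}$ from Proposition~\ref{pp2atom}) is determined by the data and not by $N$, so the resulting $r$ is uniform over $N\models T_{0}$. I expect the only real subtlety to be precisely this uniformity — keeping both the case split and the choice of $q_{1}$ independent of the model — together with the appeal to Proposition~\ref{pp2atom}, which is where the divisibility hypothesis is genuinely used.
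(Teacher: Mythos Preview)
Your proof is correct and follows essentially the same approach as the paper's: induction on the number of irreducible factors of $s$, splitting into the cases $ann_N(s_1)\neq 0$ (where $s_1\mid q$ and multiplication by $s_1$ reduces to the pair $(s',r)$) and $ann_N(s_1)=0$ (where Proposition~\ref{pp2atom} produces $q_1$ with $ann_N(q).s_1=ann_N(q_1)$, reducing to $(s',q_1)$). Your presentation of Case~1 is in fact a bit cleaner than the paper's, which introduces auxiliary polynomials $s'_t,r'$ that are not really needed once one observes directly that $.s_1$ induces $ann_N(q)/ann_N(s)\simeq ann_N(r)/ann_N(s')$.
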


\noindent
{\it Proof of  Theorem \ref{completions}}. The theorem follows  from \ref{ppindexs},  \ref{thm=thM_tor} and \ref{annq_sur_ann_s}. \qed

\medskip
We get by quantifier elimination the following consequence of Theorem \ref{completions}.
\begin{corr} \label{isomorphismetors}
Suppose $\eta_M=\eta_N$. Let $f$ be a
partial isomorphism between  submodules $A \subseteq M $ and $B \subseteq N$ where $N$ is $|M|^{+}$-saturated. Then the restriction of $f $ to $A \cap M_{tor} $ admits an extension to an  embedding of $M_{tor} $ in $N_{tor}$.
\end{corr}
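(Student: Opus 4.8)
The plan is to derive this corollary from Theorem~\ref{completions} together with Proposition~\ref{torsionfini}, using the model-theoretic hypothesis of saturation to supply an ambient divisible module in which a back-and-forth argument can run. First I would observe that since $\eta_M=\eta_N$, the modules $M$ and $N$ satisfy the same complete theory of divisible $R$-modules by Theorem~\ref{completions} (assuming both are divisible; if not, one passes to divisible closures, but in the intended application they are). The key structural input is Proposition~\ref{torsionfini}: if $\eta_M(q)$ is finite for all non-zero $q$, then any embedding of a submodule of $M_{tor}$ into $N$ extends to an embedding of all of $M_{tor}$ into $N_{tor}$, and this only requires $\eta_M\le\eta_N$. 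So the easy case is when all the $\eta_M(q)$ are finite: one simply restricts $f$ to $A\cap M_{tor}$, notes its range lies in $N_{tor}$, and applies Proposition~\ref{torsionfini} directly — saturation is not even needed there.

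The substantive case is when $\eta_M(q)=\infty$ for some $q$. Here I would run a Zorn's lemma / back-and-forth argument to extend $f\restriction (A\cap M_{tor})$ one element at a time. Take $x\in M_{tor}\setminus A$ with minimal polynomial $q$ over $A$; as in the proof of Proposition~\ref{torsionfini}, reduce to the case where $q=r$ is irreducible by peeling off factors $q=q_1\cdots q_k$ and replacing $x$ by $z:=x.q_1\cdots q_{k-1}$. Now I need $y\in N_{tor}$ with $y.r=f(z.r)$ and $y\notin f(A)$, after which the map $A+z.R\to f(A)+y.R$ sending $z\mapsto y$ is an $R$-module isomorphism extending $f$. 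Divisibility of $N_{tor}$ gives some $y_1$ with $y_1.r=f(z.r)$; if $y_1\notin f(A)$ we are done, and if $y_1\in f(A)$ we must find a non-zero element of $ann_N(r)$ outside $f(A)$ to add to $y_1$. When $\eta_M(r)<\infty$ this follows from $\eta_M(r)\le\eta_N(r)$ exactly as before; when $\eta_M(r)=\infty$, I would use that $|f(A)|\le|A|\le|M|$ together with the $|M|^+$-saturation of $N$: the set $ann_N(r)$ is infinite (indeed of size $>|M|$ by saturation, since $\eta_M(r)=\infty$ forces $ann_N(r)$ to contain, in an $|M|^+$-saturated model, more than $|M|$ elements), hence cannot be contained in $f(A)$.

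The main obstacle I anticipate is the bookkeeping around the case division on whether $\eta_M(r)$ is finite, and in particular making precise the claim that $|M|^+$-saturation of $N$ forces $|ann_N(r)|>|M|$ whenever $\eta_M(r)=\infty$. This is where one uses that $ann_N(r)=ann_N(r)$ is defined by the p.p.\ formula $x.r=0$ (Proposition~\ref{pp2atom}) and that, $M$ and $N$ being elementarily equivalent by Theorem~\ref{completions}, the infinitude of $ann_M(r)$ is expressible, so in an $|M|^+$-saturated $N$ one can realize $|M|^+$-many distinct solutions of $x.r=0$ avoiding any prescribed set of size $\le|M|$ — here $f(A)$. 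Once this is in hand, the Zorn's lemma union of the chain of partial extensions terminates at an embedding of $M_{tor}$ into $N_{tor}$ restricting to $f$ on $A\cap M_{tor}$, which is exactly the assertion. A minor point to check along the way is that at each step the new element $y$ genuinely has $r$ as a minimal polynomial over $f(A)$ (so that $f(A)+y.R\cong f(A)+z.R$), which holds because $r$ is irreducible and $y\notin f(A)$.
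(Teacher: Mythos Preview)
Your argument is correct, but it takes a more laborious route than the paper. The paper simply notes that the corollary follows from quantifier elimination (Theorem~\ref{completions}): since $\eta_M=\eta_N$, the divisible modules $M$ and $N$ are elementarily equivalent, and because the theory eliminates quantifiers, the partial isomorphism $f\restriction(A\cap M_{tor})$ is automatically partial \emph{elementary}. Saturation of $N$ then lets one extend it to an embedding of $M_{tor}$ into $N$, and the image lands in $N_{tor}$ since torsion is expressed by quantifier-free formulas. No case distinction on whether $\eta_M(r)$ is finite or infinite is needed.

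What you do instead is rerun the algebraic induction from Proposition~\ref{torsionfini}, patching the step where finiteness of $\eta_M(r)$ was used by a cardinality argument via saturation. This is perfectly valid, and your handling of the key point --- that $|M|^+$-saturation forces $|ann_N(r)|\ge |M|^+>|A'|$ at every stage of the Zorn induction whenever $\eta_N(r)=\infty$ --- is sound. The virtue of your approach is that it is more self-contained: you only use elementary equivalence (indeed, only the hypothesis $\eta_M=\eta_N$) together with the definability of $ann_N(r)$, rather than the full strength of quantifier elimination. The paper's approach, on the other hand, is a one-line application of a general model-theoretic principle once QE is in hand, and avoids revisiting the inductive machinery of Proposition~\ref{torsionfini}. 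Since Theorem~\ref{completions} is already available at this point, the paper's route is the cleaner one.
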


\medskip

In the rest of  this section we will describe the {\it building blocks} of divisible $R$-modules and improve Theorem \ref{completions}.

\begin{nota} Let $T_d$ denote  the theory of divisible $R$-modules.
\end{nota}

\begin{defn}
A module $M $ is said to be indecomposable if there are no submodules $N_1, N_2 $, both non zeros, such that
$M = N_1 \oplus N_2$.
\end{defn}

\begin{lem} \label{indecomptor}
Let $N_1 $ and $N_2 $ be indecomposable models of $T_{d} $ and $q \in R $ an irreducible polynomial such that $ann_{N_i} (q ) $ is non-trivial for $i = 1,2$. Then $N_1$ and
$N_2 $ are isomorphic.
\begin{proof}
Let $a \in ann_{N_1} (q) $ and $b \in ann_{N_2} (q) $. Then $N_1 $ is a divisible closure of $a.R$ and $N_2 $ is a divisible closure of $b.R $. So it suffices to show that $a.R$ is isomorphic to $b.R$. In fact, the application  $a.r \mapsto b.r$ is an isomorphism of $R $-module since for all $r \in R \setminus \{0 \} $, $a.r = $0 if and only if $q $ divides $r $, if and only if $b.r = 0 $.
\end{proof}
\end{lem}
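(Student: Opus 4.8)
The plan is to prove Lemma \ref{indecomptor} by reducing the isomorphism of two indecomposable divisible modules to the isomorphism of two cyclic submodules $a.R$ and $b.R$, then extending along the divisible closures. First I would pick nonzero elements $a \in ann_{N_1}(q)$ and $b \in ann_{N_2}(q)$; these exist by hypothesis. Since $q$ is irreducible, it is (up to a unit) the minimal polynomial of each of $a$ and $b$ over $\{0\}$, which immediately gives, for every $r \in R$, the equivalence $a.r = 0 \iff q \mid r \iff b.r = 0$. Hence the assignment $a.r \mapsto b.r$ is a well-defined bijective $R$-module homomorphism $a.R \to b.R$.

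Next I would argue that $N_1$ is a divisible closure of $a.R$ and $N_2$ a divisible closure of $b.R$. Here I expect to use indecomposability together with Lemma \ref{modinjective}: inside the divisible module $N_i$, the submodule $a.R$ (resp. $b.R$) has an injective hull $E$, which by item (2) of that lemma is a direct summand of $N_i$; since $N_i$ is indecomposable and $E$ is nonzero, $E = N_i$. Thus both $N_i$ are injective hulls of the respective cyclic submodules.

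Finally I would invoke the uniqueness of injective hulls (Lemma \ref{modinjective}(3)): the isomorphism $a.R \to b.R$ constructed above extends to an isomorphism of their injective hulls, i.e.\ an isomorphism $N_1 \to N_2$. This completes the proof.

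The main obstacle — really the only subtle point — is justifying that an indecomposable divisible module containing a cyclic submodule is an injective hull of that submodule rather than merely containing one. The clean way is via the direct-summand property of injective submodules (Lemma \ref{modinjective}(2)) combined with indecomposability, as sketched above; one should be slightly careful that the injective hull is taken \emph{inside} $N_i$, which is legitimate since $N_i$ is itself injective, so by item (3) a copy of the injective hull of $a.R$ embeds into $N_i$ over $a.R$, and then the summand argument forces it to be all of $N_i$. Everything else is the routine minimal-polynomial computation already used in Lemma \ref{pp2atom} and the remark following the definition of minimal polynomial.
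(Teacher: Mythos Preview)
Your proposal is correct and follows essentially the same route as the paper: pick nonzero $a,b$ annihilated by the irreducible $q$, show $a.R\cong b.R$ via the minimal-polynomial equivalence $a.r=0\iff q\mid r\iff b.r=0$, and then pass to injective hulls. The only difference is that you spell out in detail (via Lemma~\ref{modinjective}(2),(3) and indecomposability) why each $N_i$ equals the divisible closure of its cyclic submodule, whereas the paper simply asserts this; your extra care there is welcome, though the reference to Lemma~\ref{pp2atom} at the end is not quite apt---the relevant fact is just Lemma~\ref{lemqx} and the remark after the definition of minimal polynomial.
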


\begin{lem} \label{torsionpremier}
Let $N $  be an indecomposable model of $T_d$,   $r \in R $ a irreducible polynomial such that $ann_N (r) \neq 0 $. Then, for all irreducible $q \in R$ we have: $ann_N (q) \neq 0 $ if and only if, there exist $\lambda, \mu \in K \setminus \{0 \} $ such that $q = \lambda r \mu $; in which case $ann_N (r) \simeq ann_N (q) $.
\begin{proof}
 Let $0 \neq a \in ann_N (r) $ and $0 \neq b \in ann_N (q) $. Then $N $ is a divisible closure of both $a.R$ and $b.R$. Let $r_0$ be of minimal degree such that $0 \neq a.r_0 \in b.R$. So $r_0$ divides $r $. Since $r $ is irreducible, $a.r_0 \neq 0 $ implies $r_0 \in K $.
Therefore $a \in b.R $, i.e. $a = bs$ for some $s \in R \setminus \{0 \} $; take such an $s$ of minimal degree. Then $s$ divides $q $. Since $q $ is irreducible and $bs\neq 0$, $s \in K $. Moreover, the fact that  $b.sr = 0 $ implies  $q $ divides $sr$.  Since $s \in K $, $sr $ is also irreducible. Therefore $\deg (q) = \deg (r) $ and $sr = q \nu $ with $s, \nu \in K $.

Conversely, suppose $\lambda, \mu $  are as  in the statement. Then  the map $x \mapsto x. \lambda $ establishes an isomorphism of $K_0 $-vector space between $ann_N (q) $ and $ann_N (r) $.
\end{proof}
\end{lem}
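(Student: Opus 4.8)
The statement has two directions plus an isomorphism claim, and the natural strategy is to exploit that $N$, being an indecomposable divisible module with $ann_N(r)\neq 0$, is the divisible closure (injective hull) of $a.R$ for any nonzero $a\in ann_N(r)$; and likewise the divisible closure of $b.R$ for any nonzero $b\in ann_N(q)$ once we know $ann_N(q)\neq 0$. This is the engine: indecomposability forces $N$ to be "generated" by a single torsion element up to divisible closure, because a divisible closure of $a.R$ inside $N$ is a direct summand (Lemma \ref{modinjective}(2)), hence all of $N$.

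For the forward direction, I would take $0\neq a\in ann_N(r)$ and $0\neq b\in ann_N(q)$, so that $N$ is a divisible closure of $b.R$. By Lemma \ref{modinjective}(4) applied to $b.R\subseteq N$, some nonzero multiple of $a$ lands in $b.R\setminus\{0\}$; pick $r_0\in R$ of minimal degree with $0\neq a.r_0\in b.R$. By Lemma \ref{lemqx}, $r_0$ divides every $s$ with $a.s\in b.R$; in particular $r_0$ divides $r$ (since $a.r=0\in b.R$). As $r$ is irreducible and $a.r_0\neq 0$, minimality forces $r_0$ to be a unit, so $a\in b.R$, say $a=bs$ with $s\neq 0$ of minimal degree. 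The same argument run the other way: $s$ divides $q$ because $b.(sr)=a.r=0$ means $q$ is a minimal polynomial of $b$ (as $q$ is irreducible, $ann_N(q)\neq 0$), so $q\mid sr$; but also $s\mid q$ from $a=bs$ and minimality of $\deg s$ together with irreducibility of $q$ and $bs\neq 0$. Hence $s$ is a unit $\lambda$, and from $q\mid sr=\lambda r$ and $\deg q=\deg(\lambda r)=\deg r$ we get $\lambda r=q\nu$ for a unit $\nu$, i.e. $q=\lambda r\nu^{-1}$, which has the claimed form after renaming $\mu:=\nu^{-1}$.

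The converse is the easy part: if $q=\lambda r\mu$ with $\lambda,\mu\in K^\times$, then right multiplication by $\lambda$ sends $ann_N(q)$ into $ann_N(r)$ — indeed if $x.q=0$ then $(x.\lambda).(r\mu)=0$, and since $\mu$ is a unit $(x.\lambda).r=0$ — and this map is $K_0$-linear and bijective with inverse $y\mapsto y.\lambda^{-1}$. This simultaneously proves $ann_N(q)\neq 0$ (take a nonzero element of $ann_N(r)$ and multiply by $\lambda^{-1}$) and the isomorphism $ann_N(r)\simeq ann_N(q)$.

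The one point requiring care — the \emph{main obstacle} — is the noncommutativity: because $R$ is only \emph{right} euclidean, I must consistently work with \emph{right} ideals and \emph{right} divisibility, track on which side each unit appears (the conjugation $q=\lambda r\mu$ rather than $q=\lambda r$ reflects exactly this), and justify that "$s$ divides $q$ and $q$ divides $sr$ with $s$ a unit" yields $\deg q=\deg r$ and a unit relation — here I use $\deg(pq)=\deg p+\deg q$ and the uniqueness of monic divisors of maximal degree. All of this is available from the ring-theoretic preliminaries recalled at the start of Section \ref{sec:divmod}, so no new tool is needed beyond careful bookkeeping.
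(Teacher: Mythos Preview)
Your argument is essentially the paper's own proof: you use indecomposability to identify $N$ with the divisible closure of $a.R$ and of $b.R$, pass a nonzero multiple of $a$ into $b.R$ via Lemma~\ref{modinjective}(4), use Lemma~\ref{lemqx} and irreducibility of $r$ to force the minimal $r_0$ into $K$, then repeat with $s$ and $q$ to get $s\in K$ and read off $q=\lambda r\mu$; the converse via $x\mapsto x.\lambda$ is identical as well. Your write-up is slightly more explicit about which lemmas are being invoked, but the route and the key steps coincide.
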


\begin{defn} Let $r, q \in R $ be irreducible polynomials. We say that $r $ and $q$ are $K $-conjugate if there exist $\lambda, \mu \in K \setminus \{0 \} $ such that $q = \lambda r \mu $.
\end{defn}

\begin{remark}
$K $-conjugation is an
equivalence relation.
\end{remark}

\begin{nota}For the rest of this section we fix a set  $\mathcal{P}$  of representatives of irreducible polynomials modulo $K$-conjugation. Note that
the map by \ref{TRdannpremier}, the map $\eta_M$ is entirely determined by its restriction on $\mathcal{P}$ for any divisible $R$-module $M$.
\end{nota}

\begin{lem}
Let $r \in \mathcal{P} $ and $N$  be an  indecomposable model of $T_d$ containing a
non zero root of $r $. Then for all $q, s \in R \setminus \{0 \} $ such that
$ann_N (q) \supseteq ann_N (s)$, if
$\absolue{ann_N (q) / ann_N (s)} $ is finite, then it is equal
to $\absolue{ann_N (r)}^k $, for some $0 \neq k \in \N $.
\begin{proof}
By  Lemma \ref{annq_sur_ann_s} there exists $r'
\in R $ such that $$ann_N(q) / ann_N (s) \simeq ann_N
(r'). $$ Now,  $r'=r_0, \dots r_n$  with the $r_i$ irreducible. We have on
one hand $$\absolue{ann_N (r')} = \prod_{i \leq n} \absolue{ann_N (r_i)} $$
by  \ref{TRdannpremier},
and on the other hand, each $\absolue{ann_N (r_i)} \in
\{\absolue{ann_N (r)}, 1 \}$ by Lemma \ref{torsionpremier}.
\end{proof}
\end{lem}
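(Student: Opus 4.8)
The plan is to use the two earlier structural results directly and combine them. First I would invoke Lemma~\ref{annq_sur_ann_s}: since $ann_N(q) \supseteq ann_N(s)$, there is some $r' \in R$ with $ann_N(q)/ann_N(s) \simeq ann_N(r')$ as $K_0$-vector spaces, for this particular $N$ (indeed for every model of $T_d$ containing the relevant torsion, but we only need it here). Hence $\absolue{ann_N(q)/ann_N(s)} = \absolue{ann_N(r')}$, and this quantity being finite forces $\absolue{ann_N(r')}$ finite.

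Next I would factor $r' = r_0 r_1 \cdots r_n$ into irreducibles using the factorization~\eqref{eqirr} available in $R$ (any nonzero element is a product of irreducibles, absorbing the unit somewhere; if $r'$ is a unit then $ann_N(r')=0$ and the quotient is trivial, so we may assume $r'$ is a genuine product of irreducibles). Then Lemma~\ref{TRdannpremier} gives
$$\absolue{ann_N(r')} = \prod_{i=0}^n \absolue{ann_N(r_i)}.$$
Now for each $i$, either $ann_N(r_i) = 0$, in which case $\absolue{ann_N(r_i)} = 1$, or $ann_N(r_i) \neq 0$; in the latter case, since $N$ is an indecomposable model of $T_d$ containing a nonzero root of $r$ and also a nonzero root of $r_i$, Lemma~\ref{torsionpremier} (with $r$ fixed and $q = r_i$) yields $ann_N(r) \simeq ann_N(r_i)$, so $\absolue{ann_N(r_i)} = \absolue{ann_N(r)}$. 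Therefore each factor in the product lies in $\{1, \absolue{ann_N(r)}\}$, and the product equals $\absolue{ann_N(r)}^k$ where $k$ is the number of indices $i$ with $ann_N(r_i) \neq 0$. Finiteness of the whole product is automatic once we know it is finite by hypothesis; and $k \neq 0$ is exactly the assertion that $\absolue{ann_N(q)/ann_N(s)}$ is not $1$—wait, that is not given, so I should be careful: if $k = 0$ then all $r_i$ have trivial annihilator and $ann_N(r') = 0$, meaning $ann_N(q) = ann_N(s)$ and the quotient has size $1 = \absolue{ann_N(r)}^0$. So to get $k \neq 0$ one needs the quotient to be nontrivial; I would either add that hypothesis implicitly or note $\absolue{ann_N(r)}^0 = 1$ is still of the required form with $k=0$, but since the statement insists $k \neq 0$, I read it as tacitly assuming $ann_N(q) \supsetneq ann_N(s)$, in which case $ann_N(r') \neq 0$ and hence at least one $r_i$ has a nonzero root, giving $k \geq 1$.

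The only real subtlety—and the step I would flag as the main obstacle—is justifying that Lemma~\ref{torsionpremier} applies to \emph{every} irreducible factor $r_i$ simultaneously: it applies because $N$ is indecomposable and contains a nonzero root of the fixed $r \in \mathcal{P}$, and $r_i$ is irreducible with $ann_N(r_i) \neq 0$, so the lemma forces $r_i$ to be $K$-conjugate to $r$ and $ann_N(r_i) \simeq ann_N(r)$. There is no circularity: the hypothesis ``$N$ contains a nonzero root of $r$'' is used once, as the standing assumption of Lemma~\ref{torsionpremier}, uniformly for all the $r_i$. Everything else is bookkeeping: the isomorphism from Lemma~\ref{annq_sur_ann_s} transports cardinality, Lemma~\ref{TRdannpremier} turns the cardinality of a product's annihilator into a product of cardinalities, and Lemma~\ref{torsionpremier} pins each factor to $\absolue{ann_N(r)}$ or $1$.
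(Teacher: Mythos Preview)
Your proof is correct and follows essentially the same route as the paper: invoke Lemma~\ref{annq_sur_ann_s} to replace the quotient by some $ann_N(r')$, factor $r'$ into irreducibles, apply Lemma~\ref{TRdannpremier} to get a product of cardinalities, and use Lemma~\ref{torsionpremier} to force each factor to be $1$ or $\absolue{ann_N(r)}$. Your discussion of the edge case $k=0$ (when $ann_N(q)=ann_N(s)$) is in fact more careful than the paper's own proof, which silently assumes the quotient is nontrivial; your reading that the statement tacitly requires $ann_N(q)\supsetneq ann_N(s)$ is the right one.
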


\begin{proposition}
Any  divisible $R $-module is  the  direct sum of  indecomposable divisible submodules.
\begin{proof}
Any injective module over a noetherian ring  is the direct sum of  indecomposable submodules  (see \cite{lam} corollary 7.3) and  every direct summand of a divisible $R$-module is  divisible.
\end{proof}
\end{proposition}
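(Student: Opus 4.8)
The plan is to reduce everything to the classical Matlis--Papp structure theorem for injective modules over a (right) Noetherian ring. By Lemma \ref{modinjective}, for $R$-modules divisibility and injectivity are the same property, so it suffices to show that every injective $R$-module decomposes as a direct sum of indecomposable injective submodules, and then to check that such summands are again divisible. So the real content is purely module-theoretic and owes nothing to the valuation.

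First I would record that $R=K[t;\varphi]$ is right principal — it is right euclidean for the degree function, as noted after Definition \ref{defR} — hence right Noetherian, since every right ideal is singly generated and a fortiori finitely generated. The key input is then the theorem that over a right Noetherian ring every injective right module is a direct sum of indecomposable injective submodules; I would cite \cite{lam}, Corollary 7.3. Its proof, which I would only gesture at rather than reproduce, rests on two facts available over a right Noetherian ring: an arbitrary direct sum of injective modules is again injective, and every nonzero injective module contains an indecomposable injective direct summand (the injective hull of a uniform submodule). A maximal independent family of such indecomposable injective summands, produced by Zorn's Lemma, is then shown to exhaust the whole module using injectivity.

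Finally I would verify that in the resulting decomposition $M=\bigoplus_i E_i$ each $E_i$ is divisible: $E_i$ is a direct summand, hence a retract, of the injective module $M$, and a retract of an injective module is injective; by Lemma \ref{modinjective} it is therefore divisible. The one point I would be most careful about is sidedness: $R$ is not left Noetherian unless $\varphi$ is onto, so I must invoke the \emph{right}-handed version of Matlis--Papp applied to \emph{right} $R$-modules, which is precisely the convention fixed in this section. Everything else is formal bookkeeping, so I do not expect a genuine obstacle here — only the need to state the hypotheses on the correct side.
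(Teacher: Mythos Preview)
Your proposal is correct and follows essentially the same approach as the paper: both invoke the Matlis--Papp decomposition of injectives over a right Noetherian ring (the paper's citation \cite{lam}, Corollary 7.3) after identifying divisibility with injectivity via Lemma \ref{modinjective}, and then note that direct summands of a divisible module are divisible. Your added care about sidedness and the explicit reason $R$ is right Noetherian are welcome clarifications, but the argument is the same.
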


\begin{nota}
Let $M \models T_{d} $ and $q \in R $ be an irreducible polynomial. We denote by
$M_q$ the sum of all divisible indecomposable submodules of $M_{tor} $ containing at least one non zero root of $q$.
\end{nota}
\begin{proposition}
One has $M_{tor} = \oplus_{q \in \mathcal{P}} M_{q} $.
\begin{proof}
It is enough to see that if $N \subseteq M_{tor} $  is a divisible indecomposable submodule of $M_{tor}$
 then $N $ is a divisible closure of some non zero element $a \in M$ annihilated by an irreducible polynomial $q $. First, since $N $ is indecomposable, it is a divisible closure of any of its non zero elements. Then, if $q_x $ is the minimal polynomial of a non zero $x \in N $, $q_x$ can be written as
$q_1 \dots q_k $, where the $q_i$ are irreducible. Hence $a:=x. (q_1 \dots q_{k-1})$ is annihilated by $q_k$.
\end{proof}
\end{proposition}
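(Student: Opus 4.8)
The plan is to show that $M_{tor}$ decomposes as $\bigoplus_{q \in \mathcal{P}} M_q$ by reducing everything to the already-established decomposition of $M_{tor}$ into indecomposable divisible submodules, and then sorting those summands according to which $K$-conjugacy class of irreducible polynomials annihilates (some element of) them.

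First I would invoke the previous proposition: $M_{tor}$, being a divisible (hence injective, by Lemma \ref{modinjective}) $R$-module over the noetherian ring $R$, is a direct sum $\bigoplus_{i \in I} N_i$ of indecomposable divisible submodules. The key structural claim — which the excerpt's proof sketch isolates as ``it is enough to see that\ldots'' — is that each such $N_i$ is a divisible closure of some nonzero $a_i \in M_{tor}$ annihilated by an \emph{irreducible} polynomial. To get this: pick any nonzero $x \in N_i$; since $N_i$ is indecomposable it is a divisible closure of $x.R$ (indeed of any of its nonzero submodules, by Lemma \ref{modinjective}(4) one shows no proper divisible submodule can fail to meet $x.R$, so $N_i$ is the injective hull of $x.R$). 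Let $q_x = q_1 \cdots q_k$ be a minimal polynomial of $x$ factored into irreducibles; then $a_i := x.(q_1 \cdots q_{k-1})$ is nonzero (minimality of $q_x$) and $a_i.q_k = 0$, so $a_i$ is annihilated by the irreducible $q_k$. Now by Lemma \ref{torsionpremier}, the set of irreducible polynomials having a nonzero root in $N_i$ forms exactly one $K$-conjugacy class, so there is a unique $q \in \mathcal{P}$ with $ann_{N_i}(q) \neq 0$; assign $N_i$ to that class. By definition of $M_q$, we then have $N_i \subseteq M_q$ for that $q$, and conversely $M_q$ is by definition the sum of those $N_i$ assigned to $q$ (a sum of direct summands of a direct sum, hence itself a direct summand, and the sum is direct since distinct $N_i$ meet trivially). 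Summing over $\mathcal{P}$ recovers all the $N_i$, so $M_{tor} = \bigoplus_{q \in \mathcal{P}} M_q$, and directness is inherited from the directness of $\bigoplus_i N_i$.

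The one point that needs a little care — and which I expect to be the main obstacle — is the well-definedness of the assignment: I must check that an indecomposable divisible $N_i$ genuinely cannot contain nonzero roots of irreducible polynomials from two different $K$-conjugacy classes, and that $M_q$ as defined (``the sum of all divisible indecomposable submodules of $M_{tor}$ containing a nonzero root of $q$'') is not larger than the sum of the $N_i$ in our fixed decomposition. The first is exactly Lemma \ref{torsionpremier}: if $ann_N(r) \neq 0$ and $ann_N(q) \neq 0$ with $r, q$ irreducible, then $r$ and $q$ are $K$-conjugate. For the second, any divisible indecomposable submodule $N' \subseteq M_{tor}$ containing a nonzero root $a$ of $q$ is, by the structural claim applied to $N'$, a divisible closure of $a.R$; by Lemma \ref{indecomptor} $N'$ is isomorphic (as abstract $R$-module) to each $N_i$ lying in the $q$-class, and more to the point, the projection of $N'$ to any summand $N_j$ outside the $q$-class must vanish (its image would be a nonzero quotient of $N'$, hence divisible indecomposable with a nonzero $q$-root, contradicting that $N_j$ belongs to another class); hence $N' \subseteq \bigoplus_{N_i \text{ in } q\text{-class}} N_i = M_q$. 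This shows $M_q$ really is that sub-sum, completing the argument.

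I would keep the write-up at essentially the length of the sketch in the excerpt, spelling out only the decomposition step and the containment $M_q = \bigoplus_{N_i \text{ assigned to } q} N_i$, and citing Lemmas \ref{torsionpremier}, \ref{indecomptor}, \ref{modinjective} and the preceding proposition for the rest.
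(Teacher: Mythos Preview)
Your proof is correct and follows the paper's approach: invoke the previous proposition to write $M_{tor}$ as a direct sum of indecomposable divisible submodules, then show each summand contains a nonzero root of some irreducible by factoring the minimal polynomial of any nonzero element. You go beyond the paper in justifying directness and the identification of $M_q$ with the sub-sum of the $N_i$ in the $q$-class, which the paper leaves tacit; your projection argument there is sound once one notes that a nonzero divisible submodule of the indecomposable $N_j$ must equal $N_j$, and that the resulting surjection $N' \to N_j$ then splits (as $N_j$ is injective), forcing an isomorphism by indecomposability of $N'$ and hence transferring the nonzero $q$-root.
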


Hence we obtain the following improvement of  theorem \ref{completions}.

\begin{corr} \label{completionsdesRmoddiv}
Completions of $T_{d} \cup \{\exists x \mesp x \neq 0 \} $ are obtained by specifying for each $q \in \mathcal{P} $,
$\absolue{ann (q)} \in  \{\absolue{K_0}^n \tq n\in \mathbb{N}\} \cup \{\mathbb{\infty} \}$, and they admit elimination of quantifiers.
\end{corr}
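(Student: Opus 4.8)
The plan is to combine Theorem \ref{completions} (which already asserts that the completions of the theory of nonzero divisible $R$-modules are obtained by specifying, for each irreducible $q$, the value $\eta_M(q) = |ann_M(q)|$, and that each such completion eliminates quantifiers) with the structure-theoretic results just proved, in order to pin down exactly which cardinals $\eta_M(q)$ can occur and to reduce the data to the finite-to-one set $\mathcal{P}$. Two ingredients do the work: first, the reduction from all irreducible polynomials to representatives modulo $K$-conjugation; second, the computation of the possible cardinalities of $ann_M(q)$.

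First I would note that by Lemma \ref{torsionpremier} (together with the preceding remark that $K$-conjugation is an equivalence relation), for irreducible $q,r$ with $r\in\mathcal{P}$ and $q$ $K$-conjugate to $r$ one has $ann_N(q)\simeq ann_N(r)$ as $K_0$-vector spaces in every indecomposable $N\models T_d$, and by Lemma \ref{TRdannpremier} and the decomposition $M_{tor}=\bigoplus_{q\in\mathcal{P}}M_q$ (the two Propositions immediately preceding this corollary) the same holds in an arbitrary divisible module $M$; hence $\eta_M$ is determined by its restriction to $\mathcal{P}$, as recorded in the Notation fixing $\mathcal{P}$. So specifying $\eta_M$ on $\mathcal{P}$ is the same as specifying the whole completion, by Theorem \ref{completions}.

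Next I would determine the range of the function $|ann(q)|$ for $q\in\mathcal{P}$. Since $ann_M(q)$ is a $K_0$-vector space, $|ann_M(q)|=|K_0|^{\dim}$ where the dimension is a cardinal. If the dimension is finite, say $n$, we get $|K_0|^n$; if it is infinite, then (using that a divisible indecomposable module is the divisible closure of any nonzero root of $q$, and that $M_q$ is a direct sum of such modules) the cardinality is $\infty$ in the sense of $\mathbb{N}\cup\{\infty\}$, i.e. the only relevant distinction for the first-order theory is "infinite". Concretely: if some $M_q$-component is an infinite direct sum of copies of the indecomposable divisible closure of $q$, then $ann_M(q)$ is infinite; otherwise it is a finite direct sum, of cardinality $|ann_N(q)|^m=(|K_0|^{\dim ann_N(q)})^m$. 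By the last lemma before the first Proposition (the one computing $|ann_N(q)/ann_N(s)|$ as $|ann_N(r)|^k$) and Lemma \ref{TRdannpremier}, the only finite values attainable are powers of $|K_0|$; conversely each $|K_0|^n$ and the value $\infty$ are realized by taking suitable direct sums of indecomposable divisible closures. This gives exactly $\{|K_0|^n : n\in\mathbb{N}\}\cup\{\infty\}$.

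Finally, the quantifier-elimination clause is immediate: it is already part of the statement of Theorem \ref{completions} (equivalently, of Corollary \ref{thm=thM_tor}), and is inherited by every completion; restricting attention to $T_d\cup\{\exists x\, x\neq 0\}$ changes nothing here. The main obstacle, such as it is, is purely bookkeeping: one must be careful that when $\varphi$ is not onto the ring $R$ is only right-euclidean, so $ann_M(q)$ need not be closed under the left $K$-action, and the correct invariant is its $K_0$-dimension rather than a $K$-dimension — but this subtlety has already been absorbed into Lemmas \ref{torsionpremier} and \ref{annq_sur_ann_s}, so here it only needs to be invoked. I would therefore present the proof as a short assembly: "By Theorem \ref{completions} a completion is given by $\eta_M$; by the two Propositions and Lemma \ref{torsionpremier} this is determined by $\eta_M\restriction\mathcal{P}$; by Lemma \ref{TRdannpremier} and the preceding lemma the values lie in $\{|K_0|^n\}\cup\{\infty\}$ and all are realized; quantifier elimination is by Theorem \ref{completions}."
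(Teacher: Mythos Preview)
Your proposal is correct and follows exactly the approach the paper intends: the paper gives no explicit proof here, writing only ``Hence we obtain the following improvement of Theorem \ref{completions}'', and your assembly of Theorem \ref{completions}, the reduction to $\mathcal{P}$ via Lemma \ref{torsionpremier} and the decomposition $M_{tor}=\bigoplus_{q\in\mathcal{P}}M_q$, together with the $K_0$-vector-space structure of $ann_M(q)$, is precisely the intended derivation. Your final one-line summary is an accurate reconstruction of what the paper leaves implicit.
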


\subsection{The case of $K[t; Frob]$-modules}
Let $ K $  be a field of characteristic $ p> 0 $ and $Frob$ be the Frobenius endomorphism of $K$. We say that $K$ is $ p $-closed if every polynomial of the form
$ X^{p^n} + X^{p^{n-1}}a_{n-1}+ \dots + X^pa_1+ Xa_0 + b \in K[X]$ has a root in $ K $. This amounts to say that $ K $ is divisible as a
$ K [t; Frob] $-module. We  have also  the following result which was first shown by Whaples (cf. \cite{whaples} ) and then a more elementary proof was provided by  Delon in her thesis (cf. \cite{delon}).
\begin{proposition}\label{PCLOS}
A field of characteristic $ p> 0 $ is $ p $-closed if and only if it has no extension of degree divisible by $ p $.
\end{proposition}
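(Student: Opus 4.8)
The plan is to characterize $p$-closedness of $K$ via the existence of roots of the relevant additive-plus-constant polynomials, and to connect this with the degrees of finite extensions. Recall that $K$ is $p$-closed means exactly that every polynomial $f_{a,b}(X)=X^{p^n}+a_{n-1}X^{p^{n-1}}+\dots+a_1X^p+a_0X+b$ has a root in $K$, i.e.\ that the additive polynomial $\wp(X):=X^{p^n}+a_{n-1}X^{p^{n-1}}+\dots+a_0X$ (a $K_0$-linear — in fact $\F_p$-linear — endomorphism of $K$, where the coefficients $a_i$ lie in $K$ and we view $K$ as a $K[t;Frob]$-module) is surjective on $K$. So $p$-closedness is precisely divisibility of $K$ as an $R$-module with $R=K[t;Frob]$, as the text already notes. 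The core of the statement is therefore: $K$ admits no finite extension of degree divisible by $p$ if and only if every such additive-polynomial map $\wp\colon K\to K$ is onto.

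First I would prove the easy direction: \emph{if $K$ has a finite extension $L/K$ with $p\mid [L:K]$, then $K$ is not $p$-closed.} By passing to a subextension I may assume $[L:K]=p$ (take a Sylow-type step: inside the Galois closure, or more elementarily, note that if $p\mid[L:K]$ then $L$ contains, or is contained in a normal closure containing, a subfield of $K$-degree $p$ — I would be careful here, since $L/K$ need not be separable, but in the inseparable case $X^p-c$ with $c\in K\setminus K^p$ already gives a degree-$p$ extension and the additive polynomial $X^p-X-$ doesn't directly help, so the argument splits into the separable and purely inseparable cases). In the inseparable case, $K\ne K^p$, so $X^p=b$ has no root for suitable $b$, and $X^p$ is an additive polynomial, so $K$ is not $p$-closed immediately. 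In the separable case, Artin–Schreier theory applies: a degree-$p$ separable extension is generated by a root of $X^p-X-b$ for some $b\in K$ with no root in $K$; since $X^p-X$ is additive, $K$ is not $p$-closed.

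Next, the harder direction: \emph{if $K$ has no finite extension of degree divisible by $p$, then every additive polynomial map $\wp\colon K\to K$ is surjective.} The idea is to argue by contradiction and produce a degree-$p$ extension. Suppose $\wp(X)=X^{p^n}+a_{n-1}X^{p^{n-1}}+\dots+a_0X$ is not onto, say $b\notin\wp(K)$. Consider the polynomial $g(X)=\wp(X)-b\in K[X]$; it is separable (its derivative is the nonzero constant $a_0$, provided $a_0\ne0$; if $a_0=0$ one factors out a power of $t$, i.e.\ replaces $X$ by $X^{p^j}$, using that $Frob$ is injective on $K$ — this reduces to the case $a_0\ne 0$ after noting $K\to K$, $x\mapsto x^p$ is injective hence the $t$-part is handled). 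So $g$ is a separable polynomial of degree $p^n$ with no root in $K$; let $\alpha$ be a root in $\bar K$, so $[K(\alpha):K]>1$ and divides $p^n$, hence equals $p^m$ for some $m\ge1$, contradicting the hypothesis that no finite extension has degree divisible by $p$. The main obstacle I anticipate is the careful bookkeeping around \emph{inseparability and the factor $t^j$}: one must ensure that when $a_0=0$ the reduction to an honestly separable situation goes through, and in the easy direction one must correctly handle a degree-$p$ inseparable extension (where Artin–Schreier is unavailable but $X^p-b$ does the job directly). Beyond that, the equivalence of $p$-closed with $R$-divisibility is already granted in the text, so the remaining content is exactly this Galois-theoretic dichotomy, and I would simply cite Whaples/Delon (\cite{whaples}, \cite{delon}) for the details once the structure above is in place.
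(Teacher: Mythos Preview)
The paper does not give its own proof of this proposition; it only cites Whaples and Delon. So there is no proof here to compare against, and I comment only on the correctness of your plan.

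There are gaps in both directions. In your ``hard'' direction you assert $[K(\alpha):K]\mid p^n$, but this is unjustified: the minimal polynomial of $\alpha$ divides $\wp(X)-b$, so its degree is \emph{at most} $p^n$, but nothing forces it to divide $p^n$ (one can arrange, for $p=2$ and $n=3$, a root of degree $6$ over $K$). What you actually need, and what is true, is that $p\mid[K(\alpha):K]$ whenever no root lies in $K$; this follows from a trace argument you did not supply: the $d:=[K(\alpha):K]$ conjugates $\alpha_i$ of $\alpha$ all satisfy $\wp(\alpha_i)=b$, so if $p\nmid d$ then by $\F_p$-linearity of $\wp$ the element $d^{-1}\sum_i\alpha_i\in K$ is already a root in $K$.

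In your ``easy'' direction, two steps fail. First, the Sylow step yields a degree-$p$ extension of the intermediate field $M^{P}$, not of $K$, and you give no way to descend. Second, your assertion that a degree-$p$ separable extension in characteristic $p$ is Artin--Schreier is false: take $K$ to be the field of symmetric functions in $t_1,\dots,t_p$ over $\F_p$ and $L=K(t_1)$; then $[L:K]=p$ is separable but the Galois closure has group $S_p$, so $L/K$ is not cyclic and Artin--Schreier does not apply. This direction genuinely requires more, which is precisely the content of the references you plan to cite.
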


\begin{corr}
If $ L $ is a field of characteristic $ p> 0 $, $ p $-closed, then every algebraic extension of $ L $ is $ p $-closed.
\end{corr}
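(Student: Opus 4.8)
The plan is to reduce the statement about algebraic extensions to Proposition \ref{PCLOS}, which characterizes $p$-closedness by the absence of finite extensions of degree divisible by $p$. So let $L$ be $p$-closed of characteristic $p>0$ and let $M/L$ be an algebraic extension; I want to show $M$ has no finite extension of degree divisible by $p$.

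\begin{proof}
By Proposition \ref{PCLOS}, it suffices to show that $M$ has no finite extension whose degree is divisible by $p$. Suppose toward a contradiction that $N/M$ is a finite extension with $p \mid [N:M]$. Since $N/M$ is finite, we may write $N = M(\alpha_1,\dots,\alpha_m)$ with each $\alpha_j$ algebraic over $M$; as $M/L$ is algebraic, each $\alpha_j$ is algebraic over $L$, and so $N$ is generated over $L$ by the finitely many coefficients of the minimal polynomials of the $\alpha_j$ over $M$ together with the $\alpha_j$ themselves. Hence there is a \emph{finite} subextension $L \subseteq L_0 \subseteq M$ such that $N = L_0(\alpha_1,\dots,\alpha_m)$ and all the relevant minimal polynomials have coefficients in $L_0$; in particular $N_0 := L_0(\alpha_1,\dots,\alpha_m)$ is a finite extension of $L_0$, and $N = N_0 \cdot M$, with $[N:M]$ dividing $[N_0:L_0]$ by the usual bound on degrees of composita (or simply: $N/M$ is obtained from $N_0/L_0$ by base change, so $[N:M] \leq [N_0:L_0]$, and more precisely $[N:M] \mid [N_0:L_0]$ since the minimal polynomial over $M$ of a primitive element of $N_0/L_0$ divides its minimal polynomial over $L_0$). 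Now $[N_0 : L] = [N_0 : L_0]\,[L_0 : L]$ is finite, so $N_0/L$ is a finite extension. Since $p \mid [N:M]$ and $[N:M] \mid [N_0:L_0]$, we get $p \mid [N_0:L_0]$, hence $p \mid [N_0:L]$. Thus $N_0$ is a finite extension of $L$ of degree divisible by $p$, contradicting the fact that $L$ is $p$-closed (Proposition \ref{PCLOS}). Therefore no such $N$ exists, and $M$ is $p$-closed.
\end{proof}

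The only genuinely delicate point is the descent to a finitely generated intermediate field: one must argue that a finite extension $N/M$, with $M/L$ merely algebraic, already ``lives over'' a finite subextension $L_0$ of $L$, so that the $p$-divisibility of $[N:M]$ is reflected in the degree of a genuine finite extension of $L$. This is handled by taking $L_0$ to be generated over $L$ by the finitely many coefficients appearing in a finite set of defining minimal polynomials; everything else is bookkeeping with the multiplicativity of degrees and the divisibility $[N:M]\mid [N_0:L_0]$ coming from base change. I would also remark that one could instead phrase the whole argument module-theoretically — $p$-closedness of a field $E$ is equivalent to $E$ being divisible as a $K[t;Frob]$-module — but the cleanest route here is the field-theoretic one via Proposition \ref{PCLOS}.
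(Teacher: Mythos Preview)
The paper states this corollary without proof; the intended argument is the direct two–step one that falls out of Proposition~\ref{PCLOS}: first, any \emph{finite} extension $L_0/L$ is $p$-closed, since $p\nmid[L_0:L]$ and a finite extension $L_1/L_0$ with $p\mid[L_1:L_0]$ would give $p\mid[L_1:L]$; second, for arbitrary algebraic $M/L$, the coefficients of any polynomial $X^{p^n}+a_{n-1}X^{p^{n-1}}+\dots+a_0X+b$ over $M$ lie in some finite subextension $L_0\subseteq M$, which is $p$-closed by step one, so the polynomial already has a root in $L_0\subseteq M$.

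Your contrapositive descent is more elaborate and contains a genuine gap. The step $[N:M]\mid[N_0:L_0]$ is justified by invoking a primitive element of $N_0/L_0$, but in characteristic $p$ the extension $N_0/L_0$ need not be simple (it can be inseparable), so that argument does not go through as written. Only the inequality $[N:M]\le[N_0:L_0]$ is automatic from $N\cong (N_0\otimes_{L_0}M)/\mathfrak m$. The gap can be repaired --- for instance by first observing that $L$ $p$-closed forces $L$ (and hence $M$) to be perfect, so $N/M$ is separable and one may take a \emph{single} generator $\alpha$, in which case $[N_0:L_0]=\deg f=[N:M]$ on the nose --- but once you have noticed this, you are essentially back to the direct argument above. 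I would recommend replacing your descent by the two–line proof: it uses nothing beyond multiplicativity of degrees and the fact that a polynomial has only finitely many coefficients.
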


\begin{theorem}
 Let $ K \subseteq F $  be an extension of fields of characteristic $ p> 0 $. Then the torsion submodule  $F_{tor} $ of the $ K[t; Frob] $-module $ F $ is equal to the algebraic closure of $ K $ in $ F $. In addition, two $ p $-closed algebraic extensions  $ F_1 $ and $ F_2 $ of $ K $ are elementary equivalent as $ K[t; Frob] $-modules if and only if $ F_1 $ and $ F_2 $ are isomorphic as $K[t; Frob]$-modules.
\begin{proof}
Any algebraic  element over  $ K $ is a zero of an additive polynomial  by the fact that any polynomial divides an additive polynomial (see \cite{ore}).
This shows the first assertion.

Since $ F_1 $ and $ F_2 $ are $ p $-closed, by Corollary \ref{thm=thM_tor}, their theories as $ K[t; Frob] $-modules are given by the theories of their torsion submodules. Since $ F_1 $ and $ F_2 $ are algebraic over $ K $, $ F_1 = F_ {1_ {tor}} $ and $ F_2 = F_ {2_ {tor}} $. By  Proposition \ref{torsionfini},  they are isomorphic.
\end{proof}
\end{theorem}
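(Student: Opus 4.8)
The plan is to establish the two assertions separately, using the structural results already developed. For the first assertion — that $F_{tor}$ equals the relative algebraic closure of $K$ in $F$ — I would argue by double inclusion. For one direction: if $a \in F$ is algebraic over $K$, then by the classical fact (due to Ore, see \cite{ore}) that every nonzero polynomial over $K$ right-divides some additive polynomial, the minimal polynomial of $a$ over $K$ divides an additive polynomial $P$, so $P(a) = 0$; since $P$ corresponds to an element of $R = K[t;Frob]$ acting on the module $F$, this exhibits $a \in F_{tor}$. For the reverse direction: if $a \in F_{tor}$, then $a.r = 0$ for some nonzero $r \in R$, which, translated back into the language of additive polynomials, says $P(a) = 0$ for a nonzero additive polynomial $P \in K[X]$; since $P$ is in particular a nonzero ordinary polynomial with coefficients in $K$, $a$ is algebraic over $K$. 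This gives $F_{tor} = K^{alg} \cap F$.

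For the second assertion, assume $F_1$ and $F_2$ are $p$-closed algebraic extensions of $K$. One implication is immediate: isomorphic $R$-modules are elementarily equivalent as $R$-modules. For the converse, I would proceed as follows. Since $F_i$ is $p$-closed, it is divisible as an $R$-module (as noted in the excerpt), so Corollary \ref{thm=thM_tor} applies and tells us that $(F_i)_{tor}$ is an elementary substructure of $F_i$; hence the $L_{Mod}(R)$-theory of $F_i$ coincides with that of $(F_i)_{tor}$. But by the first assertion, $(F_i)_{tor} = K^{alg} \cap F_i = F_i$, the last equality because $F_i$ is assumed algebraic over $K$; so in fact $F_i = (F_i)_{tor}$ is a torsion module and is divisible. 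Now if $F_1 \equiv F_2$ as $R$-modules, then by Corollary \ref{completionsdesRmoddiv} (or directly Theorem \ref{completions}) the invariant $\eta_{F_1}$ and $\eta_{F_2}$ agree on $\mathcal{P}$, hence agree everywhere by \ref{TRdannpremier}. Since $F_i$ is a torsion divisible module, $F_i = (F_i)_{tor}$; by Proposition \ref{torsionfini}, the condition $ann_{F_1}(q) = ann_{F_2}(q)$ for all nonzero $q$ yields an isomorphism $(F_1)_{tor} \cong (F_2)_{tor}$, that is, $F_1 \cong F_2$ as $R$-modules.

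The one point requiring a little care is that Proposition \ref{torsionfini} as stated needs the annihilators to be \emph{finite}, i.e. $\eta_{F_i}(q) < \infty$ for all nonzero $q$. Here this holds because $F_i$ is algebraic over $K$: for irreducible $q \in R$, the roots of $q$ in $F_i$ lie in the splitting field of the associated additive polynomial over $K$, a finite extension, so $ann_{F_i}(q)$ is finite, and then $\eta_{F_i}$ is finite everywhere by \ref{TRdannpremier}. So I expect the main obstacle — really the only one — to be bookkeeping: making sure that "algebraic extension" is used to simultaneously conclude $F_i = (F_i)_{tor}$ and that the torsion is "thin" enough ($\eta_{F_i}$ finite-valued) for Proposition \ref{torsionfini} to apply. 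Once both observations are in place, the chain Corollary \ref{thm=thM_tor} $\Rightarrow$ elementary equivalence reduces to torsion $\Rightarrow$ equality of $\eta$ on $\mathcal{P}$ $\Rightarrow$ isomorphism of torsion submodules closes the argument.
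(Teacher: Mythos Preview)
Your proof is correct and follows essentially the same route as the paper's: Ore's divisibility result for the first assertion, and the chain $F_i$ divisible $\Rightarrow$ theory determined by $\eta_{F_i}$ (Theorem~\ref{completions}/Corollary~\ref{thm=thM_tor}) $\Rightarrow$ $\eta_{F_1}=\eta_{F_2}$ $\Rightarrow$ isomorphism via Proposition~\ref{torsionfini} for the second. You simply spell out more of the bookkeeping (the reverse inclusion in part one, the trivial direction in part two, and the finiteness of $\eta_{F_i}$---which, incidentally, is immediate just because a polynomial of degree $p^{\deg q}$ has at most $p^{\deg q}$ roots in any field).
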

\begin{corr}
Consider a field $K$ of characteristic $p>0$, p-closed and the following  tower of extensions:
$$ K \subseteq L \subsetneq M \subseteq K^{alg}.$$  Then $ L \not \equiv M $ as $ K [t; Frob] $-modules.
\end{corr}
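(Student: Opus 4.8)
The statement to prove is: if $K$ is $p$-closed of characteristic $p>0$ and $K\subseteq L\subsetneq M\subseteq K^{alg}$, then $L\not\equiv M$ as $K[t;Frob]$-modules. The plan is to reduce to the preceding theorem, which says that for $p$-closed algebraic extensions $F_1,F_2$ of $K$, elementary equivalence as $K[t;Frob]$-modules coincides with isomorphism as $K[t;Frob]$-modules. So the task becomes: show $L$ and $M$ are \emph{not} isomorphic as $K[t;Frob]$-modules. First I would observe that since $K$ is $p$-closed, so are all algebraic extensions of $K$ (by the Corollary following Proposition \ref{PCLOS}); hence $L$ and $M$ are both $p$-closed, and both are algebraic over $K$, so the preceding theorem applies once we know they are algebraic extensions — and indeed $K\subseteq L$, $K\subseteq M$ with $L,M\subseteq K^{alg}$ makes them algebraic.

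Next I would exploit that, by the first part of the preceding theorem, for any algebraic extension $F$ of $K$ one has $F=F_{tor}$ as a $K[t;Frob]$-module; that is, the whole of $L$ (resp. $M$) is torsion. So $L$ and $M$ are torsion $p$-closed, hence divisible, $K[t;Frob]$-modules with $L=L_{tor}\subsetneq M=M_{tor}$. Now suppose toward a contradiction that there is a $K[t;Frob]$-module isomorphism $\theta:L\to M$. The key point is that the underlying set inclusion $L\subsetneq M$ is strict, so pick $c\in M\setminus L$. Being algebraic over $K$, $c$ is a root of some additive polynomial, i.e. $c.q=0$ for some nonzero $q\in R=K[t;Frob]$; take $q$ of minimal degree (a minimal polynomial of $c$, existing by Lemma \ref{lemqx}). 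The strategy is to compare, inside $L$ and $M$, the annihilator sets $ann(\cdot)$ of irreducible polynomials and show that some $\eta$-value differs, contradicting that an isomorphism preserves all $\eta$-values. Concretely: factor $q=q_1\cdots q_k$ into irreducibles; then $c.(q_1\cdots q_{k-1})$ is a nonzero root of the irreducible $q_k$ lying in $M$. I would then argue that this produces, for some irreducible $r$ ($K$-conjugate to $q_k$), a strict inequality $\eta_L(r)<\eta_M(r)$: the $M_r$-component of $M$ strictly contains that of $L$ because a new indecomposable summand (a divisible closure of $c.(q_1\cdots q_{k-1})$) appears in $M_{tor}$ that has no counterpart in $L_{tor}$ — here one uses the decomposition $M_{tor}=\bigoplus_{q\in\mathcal P}M_q$ and Corollary \ref{completionsdesRmoddiv} together with the fact that $L\subseteq M$ forces $M_r\supseteq L_r$ for every $r\in\mathcal P$, with the containment strict for at least one $r$.

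The main obstacle, and the step requiring the most care, is making rigorous the claim that \emph{strict} set-inclusion $L\subsetneq M$ forces a \emph{strict} inequality $\eta_L(r)<\eta_M(r)$ for some irreducible $r$ — equivalently, that $L$ and $M$ cannot be isomorphic. A clean way to handle this: since both are divisible, $L$ is a pure (indeed direct) submodule of $M$, so $M\cong L\oplus C$ for some complement $C$ (Lemma \ref{modinjective}(2)), and $C\neq 0$ because $L\subsetneq M$; moreover $C$ is a nonzero divisible submodule of $K^{alg}$, hence $C=C_{tor}\neq 0$, so $C$ contains a nonzero root of some irreducible $r\in\mathcal P$, giving $\eta_M(r)=\eta_L(r)\cdot\eta_C(r)>\eta_L(r)$ by Lemma \ref{TRdannpremier} (using that $\eta_C(r)\ge|K_0|>1$, or at minimum $\ge 2$, since $ann_C(r)$ is a nonzero $K_0$-vector space). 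Thus $\eta_L\neq\eta_M$, so by Corollary \ref{completionsdesRmoddiv} (or already by Theorem \ref{completions}) $L$ and $M$ are not elementarily equivalent as $K[t;Frob]$-modules, which is exactly the claim.
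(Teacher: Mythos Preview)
Your argument is correct and follows the route the paper intends (the corollary is stated without proof, so the ``paper's proof'' is just the observation that the preceding theorem reduces the question to distinguishing $L$ and $M$ by their $\eta$-functions). Two small points are worth tightening. First, your appeal to Lemma~\ref{TRdannpremier} for the identity $\eta_M(r)=\eta_L(r)\cdot\eta_C(r)$ is a mis-citation: that lemma concerns factorizations $q=q_1\cdots q_k$ of a single polynomial, not direct-sum decompositions of the module. The fact you actually need is the trivial one that $ann_{L\oplus C}(r)=ann_L(r)\oplus ann_C(r)$. Second, for the strict inequality $\eta_L(r)\cdot\eta_C(r)>\eta_L(r)$ to yield $\eta_L\neq\eta_M$ in the sense of Corollary~\ref{completionsdesRmoddiv} (where values lie in $\{|K_0|^n:n\in\Nn\}\cup\{\infty\}$), you need $\eta_L(r)<\infty$; this holds because $L\subseteq K^{alg}$ and the additive polynomial associated to an irreducible $r$ of degree $d$ has at most $p^d$ roots in $K^{alg}$, but you should say so explicitly. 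With these two clarifications the proof is complete, and indeed your final paragraph already bypasses the isomorphism detour of the opening plan: once $\eta_L(r)<\eta_M(r)$ for some irreducible $r$, Theorem~\ref{completions} gives $L\not\equiv M$ directly.
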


\section{Valued Modules}
We are going to generalize the following situation:\\[2mm]
Let $(U,v)$ be a valued field  of equal characteristic $p>0$ with
$K \subseteq U $ a subfield, on which the valuation $v$ is trivial. We consider the valued  group $(U,+,v)$ together with the right action of the ring $R=K[t;Frob]$ of additive polynomials over $K$. Consider $\tau:v(U) \to v(U)$, $\gamma \mapsto p\gamma$. The valued module structure of $(U,v)$
is  the two-sorted structure
$$ \langle K,+,(.r)_{r\in R}, v(U),\leq, \tau, v, \infty \rangle$$
where $\langle U,+, (.r)_{r\in R} \rangle$ is the (right)-$R$-module
structure on  $(U,+) $ and $\langle v(U),\leq, \tau,\infty \rangle$ is the ordered value
set structure  of $v(U)$ equipped with $\tau$.

Since $v$ is trivial on $K$, for all $x \in U$ and $a \in U$, we have $v(x^{p^m}a)=
v(x^{p^m})=p^mv(x)$ and if $v(x)\neq 0$ then
we have  $v(x^{p^m})\neq v(x^{p^n})$   whenever $n\neq m$. Hence by ultrametric inequality, for an
additive polynomial $P(X):=\sum_{i \in I} X^{p^i}a_i$ $(a_i \neq 0)$ over $K$, for all $x$ with $v(x)\neq 0$ we have

$$v(P(x))=\min_{i\in I}\{p^iv(x)\}=
\begin{cases}\label{tauregular} \tau^{\min I}(v(x)) \mesp \mbox{if} \mesp v(x)>0 \\
\tau^{\max I}(v(x)) \mesp  \mbox{if} \mesp v(x)<0 \end{cases}.$$

\subsection{$\tau$-chains}
Let $L_{V_0}$ be the language $\{\leq, \tau, \infty\}$.
\begin{definition}  A $\tau$-chain is a $L_{V_0}$-structure $\Delta$ satisfying the following  axioms:
\begin{enumerate}[1.]
\item $\Delta$ is linearly ordered by $\leq$ with $\infty$ being its maximum,
\item $\tau$ is strictly increasing on $\Delta$ and $\tau(\infty)=\infty$,
\item $ \forall \gamma \neq\infty \, \forall \delta \mesp \left(\tau(\gamma)\leq\gamma \wedge \delta<\gamma\right) \rightarrow \tau(\delta)<\delta\label{tauvaluationnel0}$.
\end{enumerate}
\end{definition}
\begin{example}
For all integer $n\geq 1$ the structure $\langle \Z\cup \{\infty\}, \leq, x\mapsto nx\rangle$ satisfies the above axioms.
\end{example}

Remark that  axiom \ref{tauvaluationnel0} imply its dual:
\begin{equation} \forall \gamma \, \forall \delta \neq\infty \mesp \left(\tau(\gamma)\geq\gamma \wedge \gamma<\delta\right) \rightarrow \tau(\delta)>\delta.\label{dualtauvaluationnel0}\end{equation}

We denote:
\begin{itemize}
\item $\Delta_{+}:=\{\gamma \in \Delta \tq \tau(\gamma)>\gamma\}\cup\{\infty\}$   \item $\Delta_{-}:= \{\gamma \in \Delta \tq \tau(\gamma)<\gamma\}$

\end{itemize}
Thus,  $\Delta_{+}$ is a final segment  and $\Delta_{-}$ is an
initial segment of $\Delta$. In addition,  by Axiom  $\ref{tauvaluationnel0}$  there exists at most
one fixed point of  $\tau$ other than $\infty$: if it exists it is the unique point $\theta$ of $\Delta$ such that $\Delta_{-} < \theta < \Delta_{+}$. We define:
\begin{itemize}
\item the predicate $\delta < \theta$,    saying $\delta \in \Delta_{-}$  and,
\item the predicate $\delta>\theta$,  saying $\delta \in \Delta_{+}.$
\end{itemize}
Similarly the expressions $\delta \leq  \theta$ or $\delta \geq \theta$, does not mean that the cut $(\Delta_{-},\Delta_{+})$
is realized in $\Delta$.

\subsection{($K$-trivially) valued modules}

Recall that a valued abelian group is a structure given by
\begin{itemize}
\item an abelian group $M$,
\item a linear order $\Delta$ with a maximum element $\infty$,
\item a surjective map $v:M\to \Delta$ such that
\begin{itemize}
\item for all $x\in M$, $v(x)=\infty$ if and only if $x=0$
\item for  all $x,y \in M$, $v(x\pm y)\geq \min\{v(x),v(y)\}.$
\end{itemize}
\end{itemize}
These axioms imply that for all
$x,y \in M$,  $v(x\pm y)=\min\{v(x),v(y)\}$, whenever $v(x)\neq v(y)$, and
$v(x)=v(-x)$.

In a valued group $(G,v)$, the valuation  $v$ induces a topology, a basis of which
is given by the  {\it open balls}: these are subsets of $G$ of the form
$\{x\in G \tq v(x-a)>\gamma\}$, where $a\in G$ (center) and $\gamma \in v(G)$ (radius). We define
{\it closed balls} as usual  by changing $>$ to $\geq$.

\begin{nota}
In $(G,v)$, for all $\gamma \in v(G)$ we denote by $G_{\geq \gamma}$ (respectively $G_{>\gamma}$)  the closed ball (respectively the open ball) centered at $0$ with radius  $\gamma$.
\end{nota}

For the rest of this article we fix a field $K$  and we let $R:=K[t;\vfi]$
with $\vfi \in \text{End}(K,+,\times,1,0)$.

\begin{defn}\label{modktrivialementvalue}
Let $(M,v)$ be a valued abelian group with $v:M\to \Delta$. A $K$-trivially  valued module  structure on $(M,v)$ is given by  a right $R$-module structure on $M$
such that
\begin{enumerate}[1.]\label{axiomesmodvaluestriviales}
\item the function
$x \mapsto x.t$ is injective,
\item $\Delta$ is a $\tau$-chain,
\item $\forall x\in M$, $v(x.\lambda)=v(x)$ for all $\lambda \in K^{\times}$ \label{actiontrivideK}
\item $\forall x\in M$, $v(x.t)=\tau(v(x))$. \label{actiondeTau}
\end{enumerate}
\end{defn}

The name $K$-trivial comes from  Axiom \ref{actiontrivideK}. In \cite{gonenc} we consider also different (``non trivial")
actions of $K$ but in the present paper we will only deal with $K$-trivially valued modules. Therefore we permit our self
to omit the expression $K$-trivial and say only {\bf valued module} until the end of this paper.
\begin{remark}\label{remarkM_machantestconvexe}
When we deal with a valued module $(M,v)$,  we  write
$M_{>\theta}$ or $M_{\geq \theta}$ independently of the existence of $\theta$.
\end{remark}

%We will first  consider valued modules as two-sorted first order structures
%in the language $L_0:=L_{Mod}(R)\cup L_{V_{0}}\cup\{v\}$.

\begin{remark}
One has
$v(x) > \theta$ if and only if $v(x.t) > v(x)$, and $v(x)<\theta$ if and only if  $v(x.t)<v(x)$.
\end{remark}

%\begin{defn}
%Let $r \in R\setminus\{0\}$. We call $r$ separable if $r$  has constant term  $\neq 0$.
%\end{defn}
%Note that we understand the constant term in the sense of $R$. An additive polynomial $P\in F[X]$ over a field $F$ of characteristic $p>0$ has always its constant term equal to $0$ in the sens of $F[X]$. It is separable if and only if the coefficient of $X$ is non zero, equivalently
%if and only if, seen as an element of  $FnFrob]$, $P$ has its constant term equal to $0$.

\begin{defn}
A polynomial $r\in R$ such that $t$ does not divide $r$ is called separable.
\end{defn}
\begin{remark} Any irreducible $r\neq t$ is separable. An $R$-module is divisible if and only if it is divisible by separable polynomials and by $t$, in which case we will say $t$-divisible for short.
\end{remark}
\begin{nota}
Using Equation \ref{eqirr} we observe that any  non zero polynomial $q \in R$ can be written in the form $q=t^ns$ where $s$ separable. We  set $$\deg_{is}(q):=n .$$
\end{nota}

\begin{lem}\label{ConsT}
Let $(M,v)$ be a valued module and  $r=\sum_{i\in I}t^ia_i \in R\setminus \{0\}$. Then, for all $x \in M$, we have  $v(x.r)\geq \min_{i \in I} \{v(x.t^ia_i)\}=\min_{i\in I}\{\tau^i(v(x))\}$. In addition for all $x \in M$,
\begin{enumerate}[$1.$]
\item  if $v(x)>\theta$ then $v(x.r)=\tau^{\deg_{is}(r)}(v(x))=\min_{i\in I}\{\tau^i(v(x))\}$,
\item if  $v(x)<\theta$ then $v(x.r)=\tau^{\deg (r)}(v(x))=\min_{i\in I}\{\tau^i(v(x))\}$,
\item if $v(x.r)>min_{i\in I}\{\tau^i(v(x))\}$ and $x\neq 0$ then  $v(x)=\theta$.
\end{enumerate}
\begin{proof}
Let $x \in M\setminus\{0\}$.  By the valued abelian group structure of $(M,v)$  we have:  $v(\sum x.t^ia_i)\geq v(x.t^ia_i)$ for all $i \in I$. Then we have $v(x.t^ia_i)=v(x.t^i)=\tau^i(v(x))$ by Axioms \ref{actiontrivideK} and \ref{actiondeTau} of Definition \ref{axiomesmodvaluestriviales}.
\begin{enumerate}[1.]
\item If $v(x)>\theta$ then  $v(x.t)>v(x)$. Now, $$v(x.t^ia_i)=v(x.t^i) > v(x.t^j)=v(x.t^ja_j)$$ whenever $i > j$. Hence, $v(x.r)=v(\sum x.t^ia_i)= \min_i\{v(x.t^i)\}=\tau^k(v(x))$, where $k=\deg_{is}(r)$.
\item  In this case we have   $v(x.t)<v(x)$  and hence   $v(x.t^ia_i) < v(x.t^ja_j)$ whenever $i > j$. Thus, $v(x.r)=v(\sum x.t^ia_i)= \min_i\{v(x.t^i\}=\tau^n(v(x))$ where $n=\deg(r)$.
\item By points $1.$ and $2.$ above, $x$ can only have valuation $\theta$.
\end{enumerate}\vspace{-0.465cm}\end{proof}
\end{lem}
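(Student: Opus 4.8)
The statement to prove is Lemma~\ref{ConsT}, which has three parts plus a preliminary inequality. Let me sketch the proof.

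The plan is to start from the ultrametric inequality applied to the sum $x.r = \sum_{i \in I} x.t^i a_i$, which immediately gives $v(x.r) \geq \min_{i \in I}\{v(x.t^i a_i)\}$. The first reduction is to identify $v(x.t^i a_i)$ with $\tau^i(v(x))$: since $a_i \in K^\times$ (we may assume $a_i \neq 0$ for $i \in I$), Axiom~\ref{actiontrivideK} gives $v(x.t^i a_i) = v(x.t^i)$, and then iterating Axiom~\ref{actiondeTau} gives $v(x.t^i) = \tau^i(v(x))$. This establishes the preliminary inequality $v(x.r) \geq \min_{i \in I}\{\tau^i(v(x))\}$.

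For part~1, suppose $v(x) > \theta$, so that $v(x.t) > v(x)$, i.e. $\tau(\gamma) > \gamma$ for $\gamma = v(x) \in \Delta_+$. Since $\Delta_+$ is a final segment and $\tau$ maps it to itself (as $\tau$ is strictly increasing and fixes $\infty$), repeated application shows $\tau^i(v(x))$ is strictly increasing in $i$; hence among the terms $v(x.t^i a_i) = \tau^i(v(x))$, $i \in I$, the minimum is uniquely attained at $i = \min I = \deg_{is}(r)$ (here I use that $r = t^n s$ with $s$ separable means $\min I = n = \deg_{is}(r)$). Because the minimum is \emph{strictly} smaller than all other terms, the ultrametric inequality is an equality: $v(x.r) = \tau^{\deg_{is}(r)}(v(x))$. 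Part~2 is the dual: if $v(x) < \theta$, then $v(x) \in \Delta_-$, so by Axiom~3 of the $\tau$-chain definition $\tau$ strictly decreases on $\Delta_-$, and $\tau$ maps $\Delta_-$ into itself (again since $\tau$ is strictly increasing, preserves the order, and $\Delta_-$ is an initial segment not containing $\infty$); thus $\tau^i(v(x))$ is strictly \emph{decreasing} in $i$, so the minimum of the $\tau^i(v(x))$ over $i \in I$ is uniquely attained at $i = \max I = \deg(r)$, again forcing equality $v(x.r) = \tau^{\deg(r)}(v(x))$. Part~3 is then just the contrapositive of parts~1 and~2 together: if $x \neq 0$ and $v(x) \neq \theta$, then either $v(x) > \theta$ or $v(x) < \theta$, and in both cases we showed $v(x.r) = \min_{i \in I}\{\tau^i(v(x))\}$; so if strict inequality holds, $v(x)$ must be $\theta$ (in particular $\theta$ exists).

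The only mildly delicate point, and the one I would be most careful about, is verifying that $\tau$ maps $\Delta_+$ into $\Delta_+$ and $\Delta_-$ into $\Delta_-$, so that the sequences $(\tau^i(v(x)))_i$ are genuinely monotone and the minima are attained at the extreme index with a \emph{strict} gap — the strictness is what upgrades the ultrametric inequality to an equality. This follows from the $\tau$-chain axioms (strict monotonicity of $\tau$, $\tau(\infty) = \infty$, and Axiom~3 together with its dual~\eqref{dualtauvaluationnel0} already recorded in the text), but it is worth stating explicitly rather than leaving implicit. Everything else is a routine unwinding of the definitions and the ultrametric inequality.
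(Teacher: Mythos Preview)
Your proof is correct and follows essentially the same approach as the paper: ultrametric inequality plus Axioms~\ref{actiontrivideK} and~\ref{actiondeTau} for the preliminary bound, strict monotonicity of $(\tau^i(v(x)))_i$ in each case to upgrade to equality at the extreme index, and the contrapositive for part~3. Your explicit verification that $\tau$ preserves $\Delta_+$ and $\Delta_-$ (so that the iteration goes through) is a point the paper leaves implicit, and it is a welcome clarification.
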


\begin{corr}
The subsets $M_{>\theta}$ and $M_{\geq \theta}$ are   $R$-submodules of $M$.
\end{corr}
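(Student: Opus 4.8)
The plan is to verify the two submodule axioms directly: closure under addition and closure under the right $R$-action, using the ultrametric inequality together with Lemma \ref{ConsT}. For closure under addition, note that both $M_{>\theta}$ and $M_{\geq\theta}$ are of the form $\{x\in M\tq v(x)\bowtie\theta\}$ for $\bowtie\,\in\{>,\geq\}$; since the predicate ``$v(x)>\theta$'' really abbreviates ``$v(x)\in\Delta_+$'' and ``$v(x)<\theta$'' abbreviates ``$v(x)\in\Delta_-$'', and since $\Delta_+$ is a final segment while $\Delta_-$ is an initial segment, the inequality $v(x+y)\geq\min\{v(x),v(y)\}$ immediately gives that if $v(x),v(y)\in\Delta_+$ (resp.\ both $\geq\theta$) then $v(x+y)$ lies in the same segment. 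Similarly $v(-x)=v(x)$ handles inverses, and $v(0)=\infty\in\Delta_+$.

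For closure under scalars, take $x\in M_{>\theta}$ (the case $M_{\geq\theta}$ being handled the same way, splitting off the trivial subcase $v(x)=\theta$) and $r\in R$; I want $v(x.r)\in\Delta_+$ as well. If $r=0$ then $x.r=0$ and $v(x.r)=\infty\in\Delta_+$. If $r\neq0$, write $r=\sum_{i\in I}t^ia_i$; by Lemma \ref{ConsT}(1), since $v(x)>\theta$ we have $v(x.r)=\tau^{\deg_{is}(r)}(v(x))$. Now I invoke the key monotonicity property of a $\tau$-chain: $\Delta_+$ is a final segment stable under $\tau$ (indeed $\tau(\gamma)>\gamma$ for $\gamma\in\Delta_+\setminus\{\infty\}$, so $\tau(\gamma)\in\Delta_+$, and $\tau(\infty)=\infty$), hence stable under every iterate $\tau^k$. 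Therefore $\tau^{\deg_{is}(r)}(v(x))\in\Delta_+$, i.e.\ $v(x.r)>\theta$, as required. For $M_{\geq\theta}$ the only extra point is that when $v(x)=\theta$ (the fixed point), $v(x.r)\geq\min_i\{\tau^i(v(x))\}=\theta$ by the first inequality in Lemma \ref{ConsT}, so $x.r\in M_{\geq\theta}$; and when $v(x)>\theta$ we are back in the previous case, which gives $v(x.r)>\theta\geq\theta$.

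The only mildly delicate point — and the one I would be most careful about — is the bookkeeping around the possibly-nonexistent fixed point $\theta$: the statement must make sense (and be proved) uniformly whether or not the cut $(\Delta_-,\Delta_+)$ is realized in $\Delta$. This is exactly why $M_{>\theta}$ and $M_{\geq\theta}$ were defined via the segment predicates ``$v(x)\in\Delta_+$'' and ``$v(x)\in\Delta_+\cup\{\theta\}$'' rather than by literal comparison with an element $\theta$; once one works with the segments $\Delta_+$ and $\Delta_-$ directly, the argument is purely the combination of the ultrametric inequality with the final/initial segment structure, and there is no real obstacle. I would simply phrase the proof in terms of ``$v(x)\in\Delta_+$'' throughout to avoid any ambiguity, and remark that $M_{\geq\theta}=M_{>\theta}$ when no fixed point exists, so nothing new is needed in that case.
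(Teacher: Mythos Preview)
Your proof is correct and is exactly the natural argument the paper has in mind: the corollary is stated immediately after Lemma \ref{ConsT} with no proof given, and your reasoning (ultrametric inequality plus final-segment structure for additive closure, and the explicit formula $v(x.r)=\tau^{\deg_{is}(r)}(v(x))$ from Lemma \ref{ConsT}(1) together with $\tau$-stability of $\Delta_+$ for scalar closure) is the intended unpacking. Your care about the possibly-nonexistent $\theta$ is appropriate and matches the paper's convention in Remark \ref{remarkM_machantestconvexe}.
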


\begin{defn}\label{defconvexe}
Let $(G,v)$ be an abelian valued group. A subgroup of $G$ is called convex  if
it is the inverse image under  $v$ of a non empty final segment  of $v(G)$.
\end{defn}

\begin{lem}\label{M_>theta-convexe}
A (closed or open) ball centered at $0$ is a convex subgroup. If $(M,v)$ is a valued module then $M_{\geq \theta}$ and  $M_{>\theta}$ are convex subgroups of $M$.
\begin{proof}
A ball centered at $0$ with radius $\gamma$ is the inverse image under $v$ of the final segment $(\gamma, \infty]$ or  $[\gamma,\infty]$. Hence $M_{\geq \theta}$ and $M_{>\theta}$ are convex subgroups  if $\theta \in v(M)$. Otherwise,  \ref{remarkM_machantestconvexe},  $M_{\geq \theta}=M_{>\theta}=v^{-1}(\Delta_{+})$.
\end{proof}
\end{lem}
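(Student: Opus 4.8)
The plan is to prove each of the two assertions in Lemma \ref{M_>theta-convexe} directly from the relevant definitions, the ultrametric inequality, and Lemma \ref{ConsT}.

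First I would dispose of the statement about balls. By Definition \ref{defconvexe}, a convex subgroup of an abelian valued group $(G,v)$ is exactly the preimage under $v$ of a non-empty final segment of $v(G)$. Now fix $\gamma \in v(G)$. The closed ball $G_{\geq\gamma}=\{x\in G\tq v(x)\geq\gamma\}$ is visibly $v^{-1}\bigl([\gamma,\infty]\bigr)$ and the open ball $G_{>\gamma}=v^{-1}\bigl((\gamma,\infty]\bigr)$; both $[\gamma,\infty]$ and $(\gamma,\infty]$ are non-empty final segments of $v(G)$ (non-empty because they contain $\infty=v(0)$), so it remains only to check these preimages are subgroups. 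That is immediate from the valued-group axioms: $v(0)=\infty$ so $0$ lies in each ball, $v(-x)=v(x)$ gives closure under inversion, and $v(x\pm y)\geq\min\{v(x),v(y)\}$ gives closure under addition. So any ball centered at $0$ is a convex subgroup.

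For the second assertion, suppose $(M,v)$ is a valued module with $v:M\to\Delta$. There are two cases according to whether the fixed point $\theta$ is realized in $\Delta=v(M)$. If $\theta\in v(M)$, then by Remark \ref{remarkM_machantestconvexe} the notations $M_{\geq\theta}$ and $M_{>\theta}$ denote respectively the closed and open balls centered at $0$ of radius $\theta$, so they are convex subgroups by the first part. If $\theta\notin v(M)$, then by Remark \ref{remarkM_machantestconvexe} (the convention about writing $M_{>\theta}$, $M_{\geq\theta}$ regardless of the existence of $\theta$) we have $M_{\geq\theta}=M_{>\theta}=\{x\in M\tq v(x)>\theta\}=v^{-1}(\Delta_{+})$, using that $\Delta_{+}$ is the final segment of $\Delta$ of all elements strictly above the (unrealized) cut. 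Since $\Delta_{+}$ is a non-empty final segment of $v(M)$ by the discussion following the $\tau$-chain axioms, it suffices to note $v^{-1}(\Delta_{+})$ is a subgroup: again $0\in v^{-1}(\Delta_+)$ since $\infty\in\Delta_+$, it is closed under negation by $v(-x)=v(x)$, and closed under sums by the ultrametric inequality together with the fact that $\Delta_+$ is a final segment (if $v(x),v(y)\in\Delta_+$ then $v(x+y)\geq\min\{v(x),v(y)\}\in\Delta_+$, hence $v(x+y)\in\Delta_+$). In either case $M_{\geq\theta}$ and $M_{>\theta}$ are convex subgroups.

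I expect no serious obstacle here: the only mild subtlety is bookkeeping the case distinction on whether $\theta\in v(M)$, and making sure the convention of Remark \ref{remarkM_machantestconvexe} is invoked so that $M_{>\theta}$ is meaningful in the unrealized case; that this set equals $v^{-1}(\Delta_+)$ is precisely what the predicate ``$\delta>\theta$'' was defined to mean. Everything else is a routine verification of the subgroup axioms from the valued-group axioms. (The fact, noted earlier in the excerpt, that $M_{\geq\theta}$ and $M_{>\theta}$ are even $R$-submodules is not needed for this lemma, which only claims they are convex subgroups of the underlying valued group.)
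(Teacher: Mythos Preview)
Your proof is correct and follows essentially the same route as the paper's: identify the balls as preimages of the final segments $[\gamma,\infty]$ or $(\gamma,\infty]$, then split on whether $\theta\in v(M)$ and invoke Remark \ref{remarkM_machantestconvexe} to get $M_{\geq\theta}=M_{>\theta}=v^{-1}(\Delta_+)$ in the unrealized case. The only difference is that you spell out the subgroup verification from the ultrametric axioms, which the paper leaves implicit (and note that Lemma \ref{ConsT}, which you list in your plan, is not actually needed here).
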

\begin{lem}\label{valuationconvexe}
If $H$ is a  convex subgroup of an abelian valued group  $(G,v)$ then  the quotient
$G/H$ is valued by $v_H$, defined by: $$ v_H(g + H):= \begin{cases}v(g) \mesp \text{if} \;  g\notin H \\  \infty \mesp \text{otherwise}\end{cases}.$$
\begin{proof}
It is sufficient to verify that $v_H$ is well defined. Let $I_H$ be the final segment of  $v(G)$ such that $H=v^{-1}(I_H)$. If  $g\notin H$ and $g'\notin H$ are two representatives  of the same class, then $v(g) \notin I_H$, $v(g') \notin I_H$ and  $g-g' \in H$. Thus  $v(g-g') \in I_H$ and   $v(g-g')>v(g)$ necessarily. It follows that $v(g)=v(g')$.
\end{proof}
\end{lem}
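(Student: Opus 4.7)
The plan is to verify that $v_H$ is well-defined and then check the two axioms of a valued abelian group. The bulk of the substantive content is in well-definedness; the ultrametric inequality will reduce to a short case analysis, using that $H$ being convex means $H=v^{-1}(I_H)$ for a nonempty final segment $I_H$ of $v(G)$.

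First I would show well-definedness. Suppose $g$ and $g'$ are two representatives of the same coset, so $g-g'\in H$, and assume $g\notin H$. Then in particular $g\notin H$ forces $v(g)\notin I_H$, and I claim $g'\notin H$ as well (otherwise $g=g'+(g-g')\in H$, contradiction), so $v(g')\notin I_H$. Meanwhile $v(g-g')\in I_H$, hence $v(g-g')$ is strictly larger than both $v(g)$ and $v(g')$ (since $I_H$ is a final segment disjoint from $\{v(g),v(g')\}$). The ultrametric law then forces $v(g)=v(g')$: if they differed, $v(g-g')$ would equal the minimum of the two, contradicting $v(g-g')\in I_H$. Thus $v_H$ is well-defined on the coset. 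The axiom $v_H(x)=\infty\iff x=0$ in $G/H$ is immediate from the definition.

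Next I would verify the ultrametric inequality $v_H((g_1+H)\pm(g_2+H))\geq\min\{v_H(g_1+H),v_H(g_2+H)\}$ by cases on whether the representatives belong to $H$. If both $g_1,g_2\in H$ then $g_1\pm g_2\in H$ and all three terms are $\infty$. If $g_1\in H$ and $g_2\notin H$, then $g_1\pm g_2\notin H$; moreover $v(g_1)\in I_H$ while $v(g_2)\notin I_H$ gives $v(g_1)>v(g_2)$, so $v(g_1\pm g_2)=v(g_2)$, and the required inequality becomes an equality. Finally, if $g_1,g_2\notin H$ but $g_1\pm g_2\in H$ then $v_H((g_1\pm g_2)+H)=\infty$ and the inequality is trivial; and if $g_1\pm g_2\notin H$ as well, then $v_H$ agrees with $v$ on all three terms and the inequality is inherited from $v$.

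The only step that requires any care is well-definedness, where one has to invoke the convexity of $H$ (i.e.\ the fact that $H=v^{-1}(I_H)$ for a final segment) in order to control $v(g-g')$ relative to $v(g)$ and $v(g')$; the rest is essentially bookkeeping. I do not anticipate any genuine obstacle.
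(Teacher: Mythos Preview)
Your proof is correct and follows essentially the same approach as the paper. The paper's own proof checks only well-definedness (arguing exactly as you do, via $v(g-g')\in I_H$ hence $v(g-g')>v(g)$, forcing $v(g)=v(g')$) and declares the remaining axioms obvious; you simply spell out the case analysis for the ultrametric inequality and the $v_H(x)=\infty\iff x=0$ axiom that the paper leaves implicit.
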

\begin{corr}\label{cor:quotientmoduleisvalued}
Let $(M,v)$ be a valued module. Then $M/M_{\geq \theta}$ is canonically equipped   with a valued module structure.
\end{corr}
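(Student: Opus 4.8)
The plan is to combine the two immediately preceding results. By Lemma \ref{M_>theta-convexe}, $M_{\geq \theta}$ is a convex subgroup of the abelian valued group $(M,v)$, so Lemma \ref{valuationconvexe} applies and tells us that the quotient abelian group $M/M_{\geq \theta}$ carries a valuation $v_H$ with value set a final segment of $v(M)$ (in fact, the set $\Delta_{<\theta}$ together with $\infty$). It remains only to check that this valued abelian group, equipped with the natural $R$-module structure on the quotient, satisfies the four axioms of Definition \ref{modktrivialementvalue}.

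First I would note that $M_{\geq \theta}$ is an $R$-submodule (by the Corollary following Lemma \ref{ConsT}), so $M/M_{\geq \theta}$ is a well-defined $R$-module and the canonical projection $\pi$ is an $R$-module homomorphism; this is what ``canonically'' refers to. Then I would verify the axioms one by one. Axiom 2 is automatic: the value set of the quotient is a final segment of the $\tau$-chain $\Delta$, hence itself a $\tau$-chain (all three defining properties of a $\tau$-chain are inherited by final segments, since $\tau$ maps such a segment into itself because $\tau$ is increasing and fixes $\infty$). Axiom 3 ($v_H(\bar{x}.\lambda)=v_H(\bar{x})$ for $\lambda\in K^\times$) and Axiom 4 ($v_H(\bar{x}.t)=\tau(v_H(\bar{x}))$) follow from the corresponding axioms for $(M,v)$ together with the definition of $v_H$: if $\bar{x}\neq 0$, i.e. $x\notin M_{\geq\theta}$, then $v(x)<\theta$, so $v(x.\lambda)=v(x)<\theta$ and $v(x.t)=\tau(v(x))<v(x)<\theta$ (using the Remark that $v(y)<\theta$ iff $v(y.t)<v(y)$), so neither $x.\lambda$ nor $x.t$ lies in $M_{\geq\theta}$, and the values pass to the quotient unchanged; the case $\bar x = 0$ is trivial since $\tau(\infty)=\infty$.

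The one axiom needing a genuine (if short) argument is Axiom 1, injectivity of $\bar{x}\mapsto\bar{x}.t$ on the quotient. Suppose $\bar{x}.t=0$ in $M/M_{\geq\theta}$, i.e. $x.t\in M_{\geq\theta}$, meaning $v(x.t)\geq\theta$. If $x\notin M_{\geq\theta}$ then $v(x)<\theta$, whence $v(x.t)=\tau(v(x))<v(x)<\theta$, contradicting $v(x.t)\geq\theta$. Hence $x\in M_{\geq\theta}$, i.e. $\bar{x}=0$, which is precisely injectivity. I expect this to be the main (and only) point requiring care, and it is really just an application of the characterization of $M_{<\theta}$ via the action of $t$; the rest is bookkeeping, so the proof will be quite brief.
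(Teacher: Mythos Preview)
Your approach is correct and considerably more detailed than the paper's, which states the result as an immediate corollary of Lemmas \ref{M_>theta-convexe} and \ref{valuationconvexe} without further argument. Your decision to verify the four axioms of Definition \ref{modktrivialementvalue} explicitly is sound, and your treatment of Axioms 1, 3, and 4 is correct.

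There is, however, a slip in your verification of Axiom 2. You write that the value set of the quotient is ``a final segment of the $\tau$-chain $\Delta$'', and then argue that $\tau$ preserves final segments because it is increasing and fixes $\infty$. Both claims are wrong. The value set is $\Delta_{-}\cup\{\infty\}$ (as you yourself note parenthetically), which is an \emph{initial} segment together with $\infty$, not a final segment. And an increasing map need not preserve final segments: that would require $\tau(\gamma)\geq\gamma$, which fails on $\Delta_{-}$. The correct (and equally short) argument is that $\tau$ preserves $\Delta_{-}\cup\{\infty\}$ precisely because on $\Delta_{-}$ one has $\tau(\gamma)<\gamma<\theta$, so $\tau(\gamma)\in\Delta_{-}$; the three $\tau$-chain axioms, being universal, then restrict. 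With this correction your proof is complete.
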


\begin{defn}
A valued module $(M,v)$ will be called regular if,  for all $x\in M$ and non zero
$r=\sum_{i=0}^nt^ia_i \in R$, we have  $$v(x.r)=\min_{i}\{v(x.t^i)\}=\min_{i}\{\tau^i(v(x))\}.$$
If the above equality holds for a pair $(x,r)$ we say that $x$ is regular for $r$ (otherwise irregular for $r$) and $x$ is said to be regular
if it is regular for all $r$ (otherwise irregular).\end{defn}
\begin{remark}\label{irregelts}
 By \ref{ConsT} $x$ is irregular if and only if $v(x)=\theta$ and $v(x.r) \in M_{>\theta}$ for some non zero $r$.
\end{remark}
\begin{remark} Regular modules are necessarily torsion free.
\end{remark}

The following lemma follows directly by  Corollary \ref{cor:quotientmoduleisvalued} and by  Lemma \ref{ConsT}.
\begin{lem}\label{ConsTcorr} Let  $(M,v)$ be a valued module. The submodule
$(M_{>\theta},v)$ and the quotient
$\left(M/M_{\geq \theta}, v_{M_{\geq \theta}}\right)$ are regular valued modules.
\end{lem}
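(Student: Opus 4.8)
The plan is to prove the statement of Lemma~\ref{ConsTcorr}: that $(M_{>\theta},v)$ is a regular valued module, and that $(M/M_{\geq\theta},v_{M_{\geq\theta}})$ is a regular valued module. First I would observe that these two objects are already known to carry valued module structures — the first because $M_{>\theta}$ is an $R$-submodule of $M$ (by the corollary following Lemma~\ref{ConsT}) equipped with the restricted valuation, and the second by Corollary~\ref{cor:quotientmoduleisvalued}. So the only thing to check is \emph{regularity}, i.e. that for every element $x$ in the relevant module and every non-zero $r=\sum_{i\in I}t^ia_i\in R$, one has $v(x.r)=\min_{i\in I}\{\tau^i(v(x))\}$. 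The natural strategy is to invoke the trichotomy of Lemma~\ref{ConsT} together with Remark~\ref{irregelts}, which says precisely that the only irregular elements of $M$ are those of valuation $\theta$ whose image under some non-zero polynomial lands strictly above $\theta$.

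For the submodule $M_{>\theta}$: take $0\neq x\in M_{>\theta}$, so $v(x)>\theta$. By case~1 of Lemma~\ref{ConsT}, $v(x.r)=\tau^{\deg_{is}(r)}(v(x))=\min_{i\in I}\{\tau^i(v(x))\}$, which is exactly the regularity equation. (The valuation on $M_{>\theta}$ is just the restriction of $v$, so there is nothing further to reconcile.) Hence every non-zero element of $M_{>\theta}$ is regular, and regular modules being torsion free is consistent with $M_{>\theta}$ being a submodule of a not-necessarily-torsion-free $M$ only insofar as — well, one should just note the regularity equation holds; I need not separately re-derive torsion-freeness.

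For the quotient $M/M_{\geq\theta}$: write $\bar x = x+M_{\geq\theta}$ and recall $v_{M_{\geq\theta}}(\bar x)=v(x)$ when $x\notin M_{\geq\theta}$ and $=\infty$ otherwise. If $\bar x=0$ there is nothing to check (both sides are $\infty$, using $\tau(\infty)=\infty$). If $\bar x\neq 0$, then $x\notin M_{\geq\theta}$, meaning $v(x)<\theta$ (it cannot be $\geq\theta$). By case~2 of Lemma~\ref{ConsT}, $v(x.r)=\tau^{\deg(r)}(v(x))=\min_{i\in I}\{\tau^i(v(x))\}$; since $v(x)<\theta$ and $\tau$ is strictly decreasing below $\theta$, all the values $\tau^i(v(x))$ are themselves $<\theta$ (in particular $x.r\notin M_{\geq\theta}$), so $v_{M_{\geq\theta}}(\bar x.r)=v(x.r)=\min_{i\in I}\{\tau^i(v_{M_{\geq\theta}}(\bar x))\}$, which is the regularity equation in the quotient.

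The main (and really only) subtlety to watch is the bookkeeping between $v$ and the induced valuations $v|_{M_{>\theta}}$ and $v_{M_{\geq\theta}}$: one must make sure that ``$x.r$ lies in the submodule/avoids the convex subgroup'' so that the induced valuation of $x.r$ genuinely equals $v(x.r)$, and that the minimum on the right-hand side, computed with the induced valuation of $x$, coincides with the minimum computed with $v(x)$. Both facts follow immediately from the sign of $v(x)$ relative to $\theta$ together with $\tau$ being strictly increasing and mapping $\Delta_+$ into $\Delta_+$ and $\Delta_-$ into $\Delta_-$. Given that, the lemma is essentially an unpacking of Lemma~\ref{ConsT} and Corollary~\ref{cor:quotientmoduleisvalued}, exactly as the sentence preceding the statement asserts, so the proof is short.

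\begin{proof}
By the corollary following Lemma~\ref{ConsT}, $M_{>\theta}$ is an $R$-submodule of $M$, hence a valued module with the restricted valuation; and by Corollary~\ref{cor:quotientmoduleisvalued}, $M/M_{\geq\theta}$ is a valued module. It remains to check regularity in each case.

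Let $0\neq x\in M_{>\theta}$, so $v(x)>\theta$, and let $r=\sum_{i\in I}t^ia_i\in R\setminus\{0\}$. By point $1.$ of Lemma~\ref{ConsT}, $v(x.r)=\tau^{\deg_{is}(r)}(v(x))=\min_{i\in I}\{\tau^i(v(x))\}$, which is precisely the regularity equality for the pair $(x,r)$. Thus every non zero element of $M_{>\theta}$ is regular, so $(M_{>\theta},v)$ is regular.

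Now consider $\overline{x}=x+M_{\geq\theta}$ in $M/M_{\geq\theta}$ and $r=\sum_{i\in I}t^ia_i\in R\setminus\{0\}$. If $\overline{x}=0$ then $v_{M_{\geq\theta}}(\overline{x})=\infty$ and, since $\tau(\infty)=\infty$, both sides of the regularity equality equal $\infty$. Otherwise $x\notin M_{\geq\theta}$, so $v(x)<\theta$ and $v_{M_{\geq\theta}}(\overline{x})=v(x)$. By point $2.$ of Lemma~\ref{ConsT}, $v(x.r)=\tau^{\deg(r)}(v(x))=\min_{i\in I}\{\tau^i(v(x))\}$. Since $v(x)<\theta$ and $\tau$ maps $\Delta_{-}$ into $\Delta_{-}$, each $\tau^i(v(x))$ lies in $\Delta_{-}$, hence is $<\theta$; in particular $v(x.r)<\theta$, so $x.r\notin M_{\geq\theta}$ and $v_{M_{\geq\theta}}(\overline{x}.r)=v(x.r)$. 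Therefore
$$v_{M_{\geq\theta}}(\overline{x}.r)=\min_{i\in I}\{\tau^i(v(x))\}=\min_{i\in I}\{\tau^i(v_{M_{\geq\theta}}(\overline{x}))\},$$
which is the regularity equality in $M/M_{\geq\theta}$. Hence $\left(M/M_{\geq\theta},v_{M_{\geq\theta}}\right)$ is regular.
\end{proof}
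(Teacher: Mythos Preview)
Your proof is correct and follows exactly the route the paper indicates: the paper merely states that the lemma follows directly from Corollary~\ref{cor:quotientmoduleisvalued} and Lemma~\ref{ConsT}, and you have spelled out precisely those deductions. The only slip is in your preliminary discussion, where you write ``$\tau$ is strictly decreasing below $\theta$'' --- $\tau$ is strictly increasing everywhere, it just satisfies $\tau(\gamma)<\gamma$ on $\Delta_-$ --- but your formal proof does not repeat this and correctly argues that $\tau$ maps $\Delta_-$ into $\Delta_-$.
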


\begin{defn}
We say that  a sequence of submodules $(A_i)_{i \in I}$ of $M$ is  valuation independent  if, for any sequence $(x_i)_{i \in I}$ with
$x_i \in A_i$ and for all finite $J$ with $J\subseteq I$, we have
$$v(\sum_{i \in J} x_i)
=\min\{v(x_i)  \tq i \in J\}.$$
\end{defn}
\begin{remark}
If  $(A_i)_{i \in I}$ are torsion free and valuation independent
then  any sequence $(x_i)_{i \in I}$ with $x_i \in A_i$, is
$R$-linearly independent.
\end{remark}
\begin{fact}\label{faitreg} Let $(M,v)$ be a valued module,  $A \subseteq M$ be a regular submodule and $B$ a submodule of  $M_{tor}$. Then
 $A$ and $B$  are valuation independent.
 \begin{proof}
Let $x \in M_{tor}$, and $a\in A$. Without loss of generality we may assume both $x$ and $a$  are non zero. If $v(a-x)>v(x)=v(a)$ then $v(a)=\theta$ and since $a$ is regular $v(a.r)=\theta$. But now, for some non zero $r$ such that $x.r=0$ we have $\theta=v(a.r)<v((a-x).r)$, a contradiction.
\end{proof}
\end{fact}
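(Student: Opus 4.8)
The plan is to unwind the definition of valuation independence for the two‑element family $(A,B)$, i.e.\ to show that $v(a+b)=\min\{v(a),v(b)\}$ for all $a\in A$, $b\in B$. Since $B$ is a submodule I can replace $b$ by $-x$ with $x\in B\subseteq M_{tor}$, and since $v(a-x)\ge\min\{v(a),v(x)\}$ and equality when $v(a)\ne v(x)$ are already built into the valued abelian group axioms, everything reduces to the single bad case $v(a)=v(x)$, where I must exclude $v(a-x)>v(a)=v(x)$. One may assume $a\neq 0$ and $x\neq 0$ (otherwise there is nothing to prove; in particular one may assume $B\neq 0$), and argue by contradiction from $v(a-x)>v(a)=v(x)$.

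The first key step is to pin down the valuation of a nonzero torsion element. Pick $r\in R\setminus\{0\}$ with $x.r=0$. If $v(x)\neq\theta$, then Lemma~\ref{ConsT} (points~$1$ and~$2$) gives $v(x.r)=\tau^{k}(v(x))$ for a suitable $k$; but $\tau$ is strictly increasing with $\tau(\infty)=\infty$, so $v(x)<\infty$ forces $\tau^{k}(v(x))<\infty$, i.e.\ $x.r\neq 0$, a contradiction. Hence $v(x)=\theta$ — this is just Remark~\ref{irregelts} read for torsion elements — and in particular $\theta$ is realized in $v(M)$. Consequently $v(a)=v(x)=\theta$ as well.

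The second key step exploits regularity of $a$: since $v(a)=\theta$ and $\tau(\theta)=\theta$, regularity yields $v(a.r)=\min_{i}\{\tau^{i}(v(a))\}=\theta$, so in particular $a.r\neq 0$. On the other hand, $v(a-x)>\theta$ puts $a-x$ in the submodule $M_{>\theta}$ (the corollary following Lemma~\ref{ConsT}), hence $(a-x).r\in M_{>\theta}$, meaning $(a-x).r$ is either $0$ or has valuation $>\theta$. But $(a-x).r=a.r-x.r=a.r$, which is nonzero of valuation exactly $\theta$: contradiction. (Equivalently one can invoke that $M_{>\theta}$ is a regular valued module, Lemma~\ref{ConsTcorr}, to compute $v((a-x).r)=\tau^{\deg_{is}(r)}(v(a-x))>\theta$ directly.)

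As for the main obstacle: there is no genuine computation here, so the only real content is spotting the right reduction — that valuation independence against a torsion submodule is controlled entirely by the point $\theta$ — and then playing ``torsion elements sit at $\theta$'' against ``a regular element keeps valuation $\theta$ under every scalar'' via the submodule/convexity property of $M_{>\theta}$. The one mild subtlety worth a sentence is the degenerate case where $\theta$ is not realized in $v(M)$: then by the first step $M_{tor}=0$, so $B=0$ and the statement is vacuous.
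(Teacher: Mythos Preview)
Your proof is correct and follows essentially the same route as the paper's: assume $v(a-x)>v(a)=v(x)$, deduce $v(a)=v(x)=\theta$, use regularity of $a$ to get $v(a.r)=\theta$, and obtain a contradiction from $(a-x).r=a.r$ while $(a-x).r\in M_{>\theta}$. Your write-up is more explicit about why nonzero torsion elements must sit at $\theta$ and about the role of $M_{>\theta}$ being a submodule, but the argument is the same.
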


%\begin{proof}
%The fact that  $\left(M/M_{\geq \theta}, v_{M_{\geq \theta}}\right)$ has an induced valued module  structure  follows from  lemma \ref{valuationconvexe} and from the fact: if $x \notin M_{\geq \theta}$ then $v(x.t)<v(x)$. Then these are regular valued modules by lemma \ref{ConsT}.\end{proof}

\subsection{Henselian valued modules}
\begin{defn}\label{defhensel}
A valued module  is said to be henselian if it satisfies the following  axiom scheme:
\begin{enumerate}[H]
\item : $\forall x \mesp v(x)>\theta \rightarrow \exists y \mesp x=y.r\wedge v(y)>\theta$\\
for all separable $r \in R$.\label{henseltrivi}
\end{enumerate}
We denote by $T_h$ the theory of  henselian valued modules.
\end{defn}
%Models of $T_h$  that we want to study  include indeed the  field
%$(\F_p(X))^{alg}$,  with its $X$-adic valuation  when we consider  it as module over $\F_p^{alg}[t;Frob]$.
%Moreover:
\begin{lem}\label{henscommecorsp_imp_hens_commemodule}
Let $(K \subseteq U,v)$ be an extension of valued fields  of characteristic $p>0$, where $v$ is trivial on $K$. Then $(U,v)$   canonically inherits a valued $K[t;Frob]$-module structure. In addition, if $v$ is  henselian on the field  $U$, then $(U,v)$ is  a henselian valued module. Moreover, if  $U$ is perfect, then the maximal ideal  $\mathcal{M}_U$ associated to $v$ is a divisible $K[t;Frob]$-module.
\begin{proof}
As usual we interpret $x.t$ as  $x^p$, $\tau$ as the map $\tau:v(U) \to v(U)$,
$\gamma \mapsto p\gamma$, and $\theta$ as $0 \in v(U)$ which makes $(U,v)$ a valued module. Suppose now that $(U,v)$ is henselian as a valued field. Let $q=t^na_n + \dots + a_0$ be separable and
$y \in U$ such that $v(y)> 0$. Let $Q$ be the additive polynomial associated to $q$: i.e.
$Q(X)=a_nX^{p^{n}} + \dots + a_0X$. Since $a_n,\dots, a_0 \in K$, they have common valuation $0$, and since $q$ is separable, $a_0 \neq 0$. Then, by setting $F(X):=Q(X) - y$, we have $F'(0)=a_0$ and $F(0)=-y$. Since $v$ is henselian, there exists $z \in U$ of valuation $>0$ such that $F(z)=0$. Thus $Q(z)=y$ and $v(z)>0$. In other words,  in the language of valued modules, we have  $z.q=y$ and $v(z)>\theta$.

Finally, remark that $U_{> \theta}=\mathcal{M}_U$ and if $U$ is perfect, then   $U_{> \theta}$ is   divisible by $t$. Since by the above paragraph it is also divisible by separable polynomials, it is divisible.
\end{proof}
\end{lem}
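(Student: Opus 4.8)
The plan is to verify the three assertions of Lemma~\ref{henscommecorsp_imp_hens_commemodule} in the order they are stated, since each one feeds the next. First I would set up the module structure: interpret $x.t$ as the $p$-th power map $x\mapsto x^p$, which is injective because $U$ is a field; interpret $\tau:v(U)\to v(U)$ as $\gamma\mapsto p\gamma$, which is strictly increasing with $\tau(\infty)=\infty$; and note that $\theta=0\in v(U)$ is the unique fixed point, with $\Delta_-=v(U)_{<0}$ and $\Delta_+=v(U)_{>0}$, so that $v(U)$ is a $\tau$-chain. Axiom~\ref{actiontrivideK} holds because $v$ is trivial on $K$, so $v(x\lambda)=v(x)+v(\lambda)=v(x)$ for $\lambda\in K^\times$, and Axiom~\ref{actiondeTau} is just $v(x^p)=pv(x)=\tau(v(x))$. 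This gives the canonical valued module structure; no real obstacle here.

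Next, for henselianity as a valued module, I would argue exactly as the displayed computation before Definition~\ref{modktrivialementvalue} suggests: given a separable $q=t^na_n+\dots+t a_1+a_0$ (separability meaning $t\nmid q$, i.e. $a_0\neq0$) and $y\in U$ with $v(y)>0$, form the associated additive polynomial $Q(X)=a_nX^{p^n}+\dots+a_1X^p+a_0X$ and set $F(X)=Q(X)-y$. Then $F(0)=-y$ has $v(F(0))=v(y)>0$, while $F'(X)=a_0+(\text{terms divisible by }p)=a_0$ in characteristic $p$, so $v(F'(0))=v(a_0)=0$. Hence $v(F(0))>2v(F'(0))=0$, so the field-theoretic Hensel's lemma (in its strong/Newton form) produces $z\in U$ with $F(z)=0$ and $v(z-0)=v(F(0))-v(F'(0))=v(y)>0$. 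Translating back, $z.q=y$ with $v(z)>\theta$, which is exactly axiom scheme~H\ref{henseltrivi}. The one point to be slightly careful about is invoking the correct version of Hensel's lemma that simultaneously gives the root and controls its valuation; this is standard but worth citing precisely.

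Finally, for the last assertion assume $U$ is perfect. I would first identify $U_{>\theta}$ with the maximal ideal $\mathcal{M}_U=\{x\in U: v(x)>0\}$ of the valuation ring, which is immediate from $\theta=0$. By the previous paragraph $\mathcal{M}_U$ is divisible by every separable polynomial. Perfectness means the Frobenius $x\mapsto x^p$ is surjective on $U$; since $v(x^{1/p})=\tfrac1p v(x)>0$ whenever $v(x)>0$, Frobenius restricts to a surjection $\mathcal{M}_U\to\mathcal{M}_U$, i.e. $\mathcal{M}_U$ is divisible by $t$. By the Remark following the definition of separable polynomials, being divisible by $t$ and by all separable polynomials is equivalent to being divisible, so $\mathcal{M}_U=U_{>\theta}$ is a divisible $K[t;Frob]$-module. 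I expect the main (mild) obstacle in the whole lemma to be the second part — making sure the hypotheses of the field-theoretic Hensel's lemma are met and that it yields the valuation estimate $v(z)>0$ — while the first and third parts are essentially bookkeeping with the definitions.
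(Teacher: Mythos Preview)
Your proposal is correct and follows essentially the same route as the paper's proof: set up the module structure with $x.t=x^p$, $\tau(\gamma)=p\gamma$, $\theta=0$; for henselianity form $F(X)=Q(X)-y$ with $F(0)=-y$ and $F'(0)=a_0\neq 0$ and apply the field-theoretic Hensel's lemma to get $z$ with $v(z)>0$; then combine separable divisibility with $t$-divisibility from perfectness. Your write-up is in fact slightly more explicit than the paper's (you spell out why $F'(X)=a_0$ and invoke the Newton form of Hensel's lemma to control $v(z)$), but the argument is the same.
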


\begin{lem}\label{tfh_gives_reg}
Any torsion free henselian valued module is regular.
\end{lem}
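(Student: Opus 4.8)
The statement to prove is that any torsion free henselian valued module $(M,v)$ is regular. By Remark \ref{irregelts}, an element $x\in M$ is irregular precisely when $v(x)=\theta$ and $v(x.r)\in M_{>\theta}$ (i.e. $v(x.r)>\theta$) for some non zero $r\in R$. So the plan is to assume, for contradiction, that there is $x\in M$ with $v(x)=\theta$ and a non zero $r$ with $v(x.r)>\theta$, and to use henselianity together with torsion freeness to derive a contradiction. First I would reduce to a convenient form for $r$: since $v(x.t^ia_i)=\tau^i(v(x))=\tau^i(\theta)=\theta$ for every $i$ (as $\theta$ is the fixed point of $\tau$), writing $r=\sum_{i\in I}t^ia_i$ we may rescale by a unit and assume $r$ has nonzero constant term, and in fact by minimality considerations (Lemma \ref{lemqx}/minimal polynomial of $x$ over the relevant submodule) take $r$ of minimal degree with $v(x.r)>\theta$; note $\deg(r)\ge 1$ since $v(x.\lambda)=v(x)=\theta$ for units $\lambda$.

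\textbf{Key steps.} Set $y:=x.r$, so $v(y)>\theta$, i.e. $y\in M_{>\theta}$. The idea is to ``correct'' $x$ by an element of $M_{>\theta}$ to kill $r$. Since $M_{>\theta}$ is an $R$-submodule (Corollary after Lemma \ref{ConsT}) and, by henselianity, multiplication by any separable polynomial is surjective $M_{>\theta}\to M_{>\theta}$, I would first handle the possibility that $t\mid r$: writing $r=t^ms$ with $s$ separable, $v(x.t^m)=\tau^m(\theta)=\theta$, so replacing $x$ by $x.t^m$ (still of valuation $\theta$) we reduce to the case $r=s$ separable. Now by Axiom \ref{henseltrivi} there is $z\in M$ with $v(z)>\theta$ and $z.s=y=x.s$. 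Then $(x-z).s=0$. But $x-z\ne 0$ since $v(x)=\theta\ne v(z)$, so $x-z$ is a nonzero torsion element, contradicting torsion freeness of $M$. This is the whole argument once the reduction to $r$ separable is in place.

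\textbf{Main obstacle.} The only real point requiring care is the reduction from a general non zero $r$ with $v(x.r)>\theta$ to a \emph{separable} such polynomial acting on an element still of valuation $\theta$; this is exactly the observation that $\theta$ is $\tau$-fixed, so that $x.t^m$ has the same valuation $\theta$ as $x$, hence stripping the leading $t$-power from $r=t^m s$ costs nothing. One should also double-check that $r$ (equivalently $s$) is not a unit: if it were, $v(x.r)=v(x)=\theta$, contradicting $v(x.r)>\theta$; so $s$ is a genuine separable polynomial of positive degree and Axiom \ref{henseltrivi} applies. With these small verifications the contradiction with torsion freeness is immediate, so I expect the proof to be short.
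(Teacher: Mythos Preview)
Your proof is correct and follows essentially the same argument as the paper's. The paper writes $r=t^n s$ with $s$ separable, uses henselianity to find $y\in M_{>\theta}$ with $y.s=x.r=(x.t^n).s$, and concludes that $y-x.t^n$ is a nonzero element annihilated by $s$; your version simply renames $x.t^m$ as the new $x$ first, which amounts to the same thing. The minimality discussion in your first paragraph (invoking Lemma~\ref{lemqx}) is unnecessary and can be dropped---any $r$ witnessing irregularity works directly.
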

\begin{proof}
Suppose for a contradiction that we have a henselian torsion free module
$(M,v)$, with an element $x \in M$ irregular. Then for some non zero $r\in R$, $v(x.r)>\theta$ and $v(x)=\theta$. Write $r=t^ns$ with $s$ separable. By henselianity there exists $y \in M$, of valuation
$>\theta$ such that $y.s=x.r$. Then $y-x.t^n $ is non zero but annihilated by
$s$. Contradiction.
\end{proof}

%\begin{defn}
%Immeat extensions.
%\end{defn}
%
%\begin{theorem}
%Any valued module has
%\end{theorem}
It is a trivial fact that algebraically closed valued fields are henselian. We could look to the notion of divisibility  for valued modules as an analogue of the notion of algebraic closeness for valued fields. But there exist divisible valued modules which are not henselian as it is showed in the following proposition.
%
%\begin{lem}
%Any torsion free henselian valued module is regular.
%\end{lem}
%\begin{proof}
%Suppose for a contradiction that we have a henselian torsion free module
%$(M,v)$, with an element $x \in M$ witnessing non regularity of $M$. Then by \ref{ConsT} $v(x)=\theta$ and for some non zero $r\in R$, $v(x.r)>\theta$. Write $r=t^ns$ with $s$ separable. By henselianty there exists $y \in M$, of valuation
%$>\theta$ such that $y.s=x.r$. Then $y-x.t^n \neq 0$ but annihilated by
%$s$. Contradiction.
%\end{proof}

\begin{proposition}\label{contre-exemple}
There exist non henselian divisible valued modules.
\begin{proof}
Let $U$  be an algebraically closed field of characteristic  $p>0$, $v$ a non trivial valuation on $U$ and $K$ a trivially valued subfield. We consider $U$ with its  $K[t;Frob]$  valued module structure.

Let $y \in U$, of valuation $>0$. Then,  by  \ref{ConsT}, $y$ is not a  torsion element. Since $1$ is annihilated  by $(t-1)$, it is a torsion element (indeed $1.t=1^p=1$). Consider, the submodule $A:=(1+y).R$. It is torsion free. Set  $x=(1+y).(t-1)\in A$. Since $1.(t-1)=0$, we have also $x=y.(t-1)$. On the other hand, since $U$ is algebraically closed, it is divisible as an $R$-module. Hence $U$ contains a  divisible closure of  $A$. Let the module $B$ be  such a closure. By Lemma \ref{modinjective} (point 4) $B$ is a torsion free module. Thus $1 \notin B$, hence   $y \notin B$: $B$ can not be henselian since $y$ is the  unique element of $U$ of valuation $>0$ such that $y.(t-1)=x$.
\end{proof}
\end{proposition}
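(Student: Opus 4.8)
The goal is to exhibit a divisible valued module that fails the henselianity scheme H. The plan is to work inside a concrete example, namely an algebraically closed valued field $U$ of characteristic $p>0$ carrying a nontrivial valuation, equipped with its $K[t;Frob]$-module structure over a trivially valued subfield $K$ (say $K=\F_p$, or any trivially valued subfield). Since $U$ is algebraically closed it is $p$-closed, hence divisible as an $R$-module, so $U$ itself is a divisible valued module; the task is to carve out of it a divisible submodule where henselianity breaks.

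The key mechanism is the interplay between the torsion element $1$ and a non-torsion element $y$ of positive valuation. First I would note that by Lemma \ref{ConsT}, any $y$ with $v(y)>0=\theta$ is regular, hence torsion free, so $y$ is not a torsion element. On the other hand $1$ is torsion, being annihilated by $t-1$ (as $1.t=1^p=1$). Now consider $A:=(1+y).R$, which is torsion free since $1+y$ has $v(1+y)=0=\theta$ and... actually the cleaner point is that $(1+y).(t-1) = 1.(t-1) + y.(t-1) = y.(t-1)$, and more generally the annihilator of $1+y$ is trivial because $y$ is torsion free — so $A$ is a free rank-one module. Set $x:=(1+y).(t-1)=y.(t-1)\in A$. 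Then take $B$ to be a divisible closure of $A$ inside $U$; by Lemma \ref{modinjective}(4), every nonzero element of $B$ has a nonzero $R$-multiple landing in $A\setminus\{0\}\subseteq$ a torsion-free module, so $B$ is torsion free. In particular $1\notin B$ (since $1$ is torsion), and therefore $y=(1+y)-1\notin B$.

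Finally I would argue that $B$ is divisible but not henselian. Divisibility holds by construction. For the failure of H: the element $x\in B$ has $v(x)=v(y.(t-1))$. By Lemma \ref{ConsT}, since $v(y)>\theta$ we get $v(y.(t-1))=\tau^{\deg_{is}(t-1)}(v(y))=\tau^0(v(y))=v(y)>\theta$, so $x\in B_{>\theta}$. If $B$ were henselian, applying H to the separable polynomial $t-1$ would yield some $z\in B$ with $v(z)>\theta$ and $z.(t-1)=x$. But inside $U$, the equation $w.(t-1)=x$, i.e. $w^p-w=x$, has exactly $p$ solutions differing by elements of $\F_p\subseteq U$, namely the torsion roots of $t-1$; among these the unique one of positive valuation is $y$ itself (the others have valuation $0$, being $y+c$ for $c\in\F_p^\times$). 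Hence the only candidate $z$ with $v(z)>\theta$ is $z=y$, and $y\notin B$ — contradiction. Therefore $B$ is a divisible valued module that is not henselian.

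The main obstacle in making this rigorous is the uniqueness claim: that $y$ is the unique element of $U$ of valuation $>0$ with $z.(t-1)=x$. This requires observing that two such solutions differ by an element of $\ann_U(t-1)$, and $\ann_U(t-1)$ consists of the roots in $U$ of $X^p-X$, which form the prime field $\F_p$; an element $y+c$ with $c\in\F_p^\times$ has valuation $\min\{v(y),v(c)\}=v(c)=0$ (by the ultrametric inequality and $v$ trivial on $K\supseteq\F_p$, using $v(y)>0\neq v(c)$). So $y$ is indeed the unique solution of positive valuation, which combined with $y\notin B$ defeats henselianity. Everything else is bookkeeping with the already-established lemmas on divisible closures and on the valuation of $x.r$.
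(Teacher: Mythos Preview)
Your proof is correct and follows essentially the same approach as the paper's own proof: you work inside an algebraically closed valued field $U$, take the same cyclic module $A=(1+y).R$, pass to a divisible closure $B$ of $A$ inside $U$, use the injective-hull property to see $B$ is torsion free (hence $1\notin B$, so $y\notin B$), and conclude that henselianity fails for $x=y.(t-1)\in B_{>\theta}$. Your version is in fact slightly more explicit than the paper's on two points---the computation $v(x)>\theta$ via Lemma~\ref{ConsT}, and the uniqueness of $y$ among solutions of positive valuation via $\mathrm{ann}_U(t-1)=\F_p$---both of which the paper leaves implicit.
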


\begin{corr}\label{rem-contre-ex} Take $x$ as above. Then $x.R$ has two divisible closures non elementary equivalent as valued modules.
\begin{proof} Since $v(x)>\theta$ and $U_{>\theta}$ is divisible, $x.R$ has a divisible closure inside $U_{>\theta}$, which is necessarily henselian. This can not be  elementary equivalent to $B$, since $B$ is not henselian.
\end{proof}
\end{corr}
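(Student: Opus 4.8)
The plan is to exhibit two divisible closures of $x.R$ inside the fixed algebraically closed valued field $U$ from Proposition \ref{contre-exemple}: one henselian, one not, and then invoke henselianity as a first-order distinguishing property. Recall that $x = y.(t-1) = (1+y).(t-1)$, where $v(y) > 0$ and $1$ is torsion (since $1.t = 1$). From Lemma \ref{ConsT}, since $v(y) > 0 = \theta$, we get $v(x) = v(y.(t-1)) = \tau^{0}(v(y)) = v(y) > \theta$, so $x \in U_{>\theta}$.

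First I would produce the henselian closure. By Lemma \ref{M_>theta-convexe} (or the corollary just before Lemma \ref{M_>theta-convexe}) $U_{>\theta}$ is an $R$-submodule of $U$, and since $U$ is algebraically closed of characteristic $p$, it is $t$-divisible and divisible by separable polynomials, hence by Lemma \ref{henscommecorsp_imp_hens_commemodule} $U_{>\theta} = \mathcal{M}_U$ is a divisible $K[t;Frob]$-module. Since $v(x) > \theta$, we have $x.R \subseteq U_{>\theta}$, so by Lemma \ref{modinjective} there is an injective hull (divisible closure) $B_1$ of $x.R$ with $x.R \subseteq B_1 \subseteq U_{>\theta}$. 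Any divisible submodule of $U_{>\theta}$ is automatically henselian: given $z \in B_1$ with $v(z) > \theta$ and separable $r$, divisibility of $B_1$ gives $z' \in B_1$ with $z'.r = z$, and then $v(z') > \theta$ follows because $v(z') = \theta$ would force, by Lemma \ref{ConsT} point 1 applied at a valuation $>\theta$ — more precisely, if $v(z') \le \theta$ then $v(z'.r) = \min_i \{\tau^i(v(z'))\} \le v(z') \le \theta$, contradicting $v(z.r) = v(z) > \theta$. So $B_1$ is a henselian divisible valued module.

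Second, $B$ from Proposition \ref{contre-exemple} is a divisible closure of $A = (1+y).R \supseteq x.R$; but is it a divisible closure of $x.R$? It is divisible, it contains $x.R$, and every element has a nonzero $R$-multiple landing in $A$; I need such a multiple landing in $x.R$. This is the one genuine gap to fill. The cleanest route is to take instead any divisible closure $B_2$ of $x.R$ chosen inside $B$ (which exists by Lemma \ref{modinjective} applied with ambient module $B$) and argue $B_2$ is not henselian: if it were, since $v(x) > \theta$ it would contain some $w \in B_2$ with $w.(t-1) = x$ and $v(w) > \theta$; but the only element of $U$ with $u.(t-1) = x$ and $v(u) > 0$ is $y$ (any two differ by an element of $\ker(t-1)$, i.e.\ a fixed point of Frobenius in $U$, which is algebraic over the prime field hence has valuation $0$, unless it is $0$), forcing $w = y$, hence $1 = (1+y) - y \in B_2$; but $B_2 \subseteq B$ is torsion free by Lemma \ref{modinjective}(4) while $1$ is torsion, contradiction. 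Thus $B_2$ is a divisible closure of $x.R$ that is not henselian.

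Finally, henselian divisible valued modules form an elementarily axiomatizable class (the scheme $H$ together with divisibility axioms), so $B_1 \models T_h$ while $B_2 \not\models T_h$ gives $B_1 \not\equiv B_2$ as $L$-structures. The main obstacle is purely the bookkeeping in the previous paragraph: making sure one picks divisible closures of $x.R$ (not of $A$) and verifying that the failure of henselianity is inherited by any divisible closure sitting inside $B$ — i.e.\ that the obstruction witness $y \notin B$ already used in Proposition \ref{contre-exemple} still applies, which it does since $B_2 \subseteq B$ and $y \notin B$.
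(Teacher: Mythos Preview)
Your proof is correct and follows the same outline as the paper's: one divisible closure inside $U_{>\theta}$ (hence henselian), one inside $B$ (hence not), and henselianity is first-order. Two minor remarks.

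First, the step ``hence $1 = (1+y) - y \in B_2$'' is unjustified: $1+y$ lies in $A \subseteq B$, but you have not shown $1+y \in B_2$. This does not matter, because you already have the contradiction one line earlier: $w = y$ gives $y \in B_2 \subseteq B$, while Proposition~\ref{contre-exemple} established $y \notin B$. Your final paragraph says exactly this, so the argument stands; just delete the detour through $1$. (Incidentally, your verification that $B_1$ is henselian is also longer than needed: $z' \in B_1 \subseteq U_{>\theta}$ already forces $v(z') > \theta$, no case analysis required.)

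Second, a comparison with the paper. The paper simply uses $B$ itself as the second divisible closure of $x.R$, without comment. This is legitimate but tacit: since $A \cong R_R$ is a uniform module over the Ore domain $R$, the nonzero submodule $x.R$ is essential in $A$, hence essential in $B$, so $B$ is already an injective hull of $x.R$. You noticed this gap and sidestepped it by passing to $B_2 \subseteq B$; that works, and your re-run of the henselianity obstruction for $B_2$ is clean. Either route is fine; the paper's is shorter once one knows the essentiality fact, while yours avoids invoking it.
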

\subsection{Henselian divisible valued modules}
The  results of this subsection contain the essential  information that will be used to establish an  Ax-Kochen and Ershov principle.
\begin{lem}\label{henseldivisible}
If  $(M,v)$ is a henselian divisible valued module,  $M_{\geq \theta}$ is divisible and $M_{>\theta}$ is divisible  torsion free. Hence $M_{\geq \theta}$ is a direct summand in $M$, and  $M_{>\theta}$ is a direct summand  in  $M_{\geq \theta}$.
\begin{proof}
Let $r \in R\setminus\{0\}$, $x \in M_{\geq \theta}$  and  $y \in M$ such that $y.r=x$. Then by \ref{ConsT} $y\in M_{\geq \theta}$. Thus $M_{\geq \theta}$  is a divisible submodule of $M$.

Now we show that  $M_{> \theta}$ is divisible. Let $x$ be of valuation $>\theta$ and
$r \in R$, non zero.
Write  $r=t^n.r'$ with $r'$ separable, then there exists, by axiom $H$, $y \in M_{>\theta}$ such that $x=y.r'$. By $t$-divisibility of $M$, we have  $y=y'.t^n$ for some $y' \in M_{>\theta}$, hence $y'.r=x$. Moreover $M_{>\theta}$ is  torsion free since it is a regular submodule.
\end{proof}
\end{lem}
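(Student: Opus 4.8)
The statement has four assertions: (i) $M_{\geq\theta}$ is divisible; (ii) $M_{>\theta}$ is divisible and torsion free; (iii) $M_{\geq\theta}$ is a direct summand of $M$; (iv) $M_{>\theta}$ is a direct summand of $M_{\geq\theta}$. My plan is to dispatch (i) and (ii) directly from the axioms, then obtain (iii) and (iv) for free via injectivity.

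For (i): take $r\in R\setminus\{0\}$, $x\in M_{\geq\theta}$, and since $M$ is divisible pick $y\in M$ with $y.r=x$. I want $v(y)\geq\theta$. Suppose not, i.e. $v(y)<\theta$; then by Lemma~\ref{ConsT}(2) we get $v(x)=v(y.r)=\tau^{\deg(r)}(v(y))$, and since $v(y)\in\Delta_{-}$ and $\tau$ maps $\Delta_{-}$ into $\Delta_{-}$ (it is an initial segment closed under $\tau$), we conclude $v(x)<\theta$, contradicting $x\in M_{\geq\theta}$. Hence $y\in M_{\geq\theta}$ and $M_{\geq\theta}$ is divisible. For (ii): first, $M_{>\theta}$ is torsion free because it is a regular submodule (Lemma~\ref{ConsTcorr}, together with the Remark that regular modules are torsion free) — its regularity coming from Lemma~\ref{ConsTcorr} since $(M_{>\theta},v)$ is regular. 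For divisibility, write a given nonzero $r$ as $r=t^{n}r'$ with $r'$ separable (the $\deg_{is}$ decomposition). Given $x\in M_{>\theta}$, henselianity axiom~H applied to the separable polynomial $r'$ gives $y\in M$ with $v(y)>\theta$ and $y.r'=x$. Then $t$-divisibility of $M$ (which holds since $M$ is divisible, hence $t$-divisible) yields $y'\in M$ with $y'.t^{n}=y$; and since $v(y)>\theta$, Lemma~\ref{ConsT}(1) forces $v(y')>\theta$ as well (again using that $\Delta_{+}$ is a final segment and $\tau$ is strictly increasing, so $v(y)=\tau^{n}(v(y'))>\theta$ pushes $v(y')$ above $\theta$). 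Then $y'.r=y'.t^{n}.r'=y.r'=x$, so $M_{>\theta}$ is divisible.

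For (iii) and (iv): by Lemma~\ref{modinjective}(1), over the right principal ring $R$ divisible means injective. So $M_{\geq\theta}$, being divisible by (i), is injective, and by Lemma~\ref{modinjective}(2) it is a direct summand of any module containing it, in particular of $M$. Likewise $M_{>\theta}$ is divisible by (ii), hence injective, hence a direct summand of $M_{\geq\theta}$.

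**Main obstacle.** There is no deep difficulty here; the only point requiring care is the bookkeeping with $\tau$ and the cut at $\theta$ — namely, that $v(y.r)\geq\theta$ (resp. $>\theta$) does \emph{force} $v(y)\geq\theta$ (resp. $>\theta$), which is where one must invoke Lemma~\ref{ConsT} rather than merely the ultrametric inequality, since a priori $v(y)$ could be $\theta$ with $v(y.r)$ jumping up into $\Delta_{+}$ (the irregular case). Checking that this escape does not occur for the elements we produce — and in (i) that $v(y)<\theta$ is genuinely impossible — is the one spot to be explicit. Everything else is a direct citation.
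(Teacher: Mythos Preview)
Your proof is correct and follows essentially the same route as the paper: use divisibility of $M$ plus Lemma~\ref{ConsT} to pull preimages into $M_{\geq\theta}$; use the factorisation $r=t^n r'$ with axiom~H and $t$-divisibility for $M_{>\theta}$; invoke regularity (Lemma~\ref{ConsTcorr}) for torsion-freeness; and deduce the direct-summand statements from divisible~$=$~injective (Lemma~\ref{modinjective}). The only cosmetic remark is that in step (ii) you cite Lemma~\ref{ConsT}(1) for ``$\tau^n(v(y'))>\theta\Rightarrow v(y')>\theta$'', but in fact $v(y'.t^n)=\tau^n(v(y'))$ holds unconditionally from the valued-module axioms, and the implication is a pure $\tau$-chain property; also your worry about the irregular case is harmless here, since in (i) the conclusion $v(y)\geq\theta$ already covers $v(y)=\theta$, and in (ii) the elements $y,y'$ you construct land strictly above $\theta$ by design.
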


\begin{corr}\label{obsmodT}
Let $(M,v)$ be a henselian divisible valued module. If $x \in M$ is irregular then there exists a unique couple $(x_{tor},x_{>\theta})$, with $x_{tor} \in M_{tor}$ and $x_{>\theta} \in M_{>\theta}$ such that
$$x=x_{tor} + x_{>\theta}.$$ As a consequence,
\begin{enumerate}
\item for all $x \in M$ and  $q=\sum_{i\in I}t^ia_i\in R\setminus \{0\}$, there exists regular $y \in M$ such that $x=y.q$ and hence $v(x)=\tau^{k}(v(y))$, where $k=\deg_{is}(q)$ if $v(x)>\theta$, $k=\deg(q)$ if $v(x)<\theta,$ and $k=0$ if $v(x)=\theta$; in any case $$v(x)=\min_{i \in I}\{v(y.t^ia_i)\}=\min_{i \in I}\{\tau^i(v(y))\},$$
\item  $0\neq x\in M$  is regular for $r$ (resp. regular) if and only if for all $a \in ann_M(r)$ (resp. for all $a\in M_{tor}$), $v(x-a)=\min\{v(x),\theta\}$.
\end{enumerate}
\begin{proof}
Take an irregular  $x\in M$. Let  $r \in R\setminus \{0\}$ be such that $x.r \in M_{>\theta}$ (given by \ref{irregelts}). Since
$M_{>\theta}$ is divisible,  $x.r=y.r$ for some $y\in M_{>\theta}$ and $y$ is necessarily regular. Since $M_{tor}\cap M_{>0}=0$
 the couple $(x-y,y)$ is unique as required. Then it follows from \ref{ConsT} that $v(y.r)=\tau^{\deg_{is}(r)}(v(y))$.
The first consequence mentioned  follows then by \ref{ConsT} and the second from the proof of the first one.
\end{proof}
\end{corr}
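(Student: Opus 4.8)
The plan is to prove the decomposition first and then read off both consequences from it together with Lemma~\ref{ConsT}. If $x$ is irregular then, by Remark~\ref{irregelts}, $v(x)=\theta$ and $x.r\in M_{>\theta}$ for some non-zero $r\in R$. Since $M_{>\theta}$ is divisible by Lemma~\ref{henseldivisible}, I choose $y\in M_{>\theta}$ with $y.r=x.r$; such a $y$ is regular because $M_{>\theta}$ is a regular submodule (Lemma~\ref{ConsTcorr}). Then $(x-y).r=0$, so $x=x_{tor}+x_{>\theta}$ with $x_{tor}:=x-y\in M_{tor}$ and $x_{>\theta}:=y\in M_{>\theta}$. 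For uniqueness, if $a+b=a'+b'$ with $a,a'\in M_{tor}$ and $b,b'\in M_{>\theta}$, then $a-a'=b'-b$ lies in $M_{tor}\cap M_{>\theta}$, which is $\{0\}$ since $M_{>\theta}$ is torsion free (Lemma~\ref{henseldivisible}).

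For consequence~(1) I would split according to the position of $v(x)$ relative to $\theta$. If $v(x)>\theta$, then $x\in M_{>\theta}$, which is divisible and regular, so any $y\in M_{>\theta}$ with $y.q=x$ is regular and Lemma~\ref{ConsT}(1) gives $v(x)=\tau^{\deg_{is}(q)}(v(y))=\min_{i\in I}\{\tau^i(v(y))\}$. If $v(x)<\theta$, divisibility of $M$ provides $y$ with $y.q=x$; here $v(y)<\theta$ is forced, since $v(y)\ge\theta$ would yield $v(y.q)\ge\theta>v(x)$ by Lemma~\ref{ConsT}, contradicting $y.q=x$. Then $v(y)\neq\theta$, so $y$ is regular by Remark~\ref{irregelts}, and Lemma~\ref{ConsT}(2) gives the identity with $k=\deg(q)$. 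If $v(x)=\theta$, divisibility of $M_{\geq\theta}$ (Lemma~\ref{henseldivisible}) provides $y\in M_{\geq\theta}$ with $y.q=x$; then $v(y)=\theta$ (otherwise $v(y.q)>\theta$), and since $v(y.q)=v(x)=\theta=\min_{i\in I}\{\tau^i(v(y))\}$, the element $y$ is regular for $q$ and $v(x)=\tau^0(v(y))=v(y)$. I expect this last case to be the delicate one: $y$ can genuinely fail to be fully regular — it need only be regular for the given $q$ — so the assertion is best understood as producing a $y$ regular for $q$.

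For consequence~(2): assume first $x$ is regular for $r$ and let $a\in ann_M(r)$. If $a=0$ or $v(x)\neq\theta$ the equality $v(x-a)=\min\{v(x),v(a)\}$ is automatic from the valued group axioms, since then $v(x)\neq v(a)$; so suppose $a\neq0$ and $v(x)=v(a)=\theta$. If $v(x-a)>\theta$, then $x-a\in M_{>\theta}$, hence $v((x-a).r)>\theta$ because $M_{>\theta}$ is regular, whereas $(x-a).r=x.r$ and $v(x.r)=\min_{i}\{\tau^i(\theta)\}=\theta$ by regularity of $x$ for $r$ — a contradiction; thus $v(x-a)=\theta=\min\{v(x),v(a)\}$, which for $a\neq0$ is $\min\{v(x),\theta\}$ since roots of $r$ are torsion and so have value $\theta$. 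Conversely, if $x$ is irregular for $r$, then $v(x)=\theta$ and $x.r\in M_{>\theta}$ by Lemma~\ref{ConsT}; divisibility of $M_{>\theta}$ gives $b\in M_{>\theta}$ with $b.r=x.r$, so $a:=x-b\in ann_M(r)$ while $v(x-a)=v(b)>\theta=\min\{v(x),\theta\}$, violating the hypothesis. The parenthetical version — ``$x$ regular'' in place of ``$x$ regular for $r$'', with $a$ ranging over $M_{tor}$ — is the same argument, using $ann_M(r)\subseteq M_{tor}$ for every non-zero $r$. The only real obstacle throughout is bookkeeping: tracking whether $\theta$ actually lies in the value set, and the difference between ``regular'' and ``regular for a fixed polynomial''; once these are pinned down, everything reduces to Lemmas~\ref{ConsT} and~\ref{henseldivisible}.
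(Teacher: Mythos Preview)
Your proof is correct and follows the same route as the paper: obtain the decomposition from divisibility and torsion-freeness of $M_{>\theta}$ (Lemma~\ref{henseldivisible}), then read off both consequences from Lemma~\ref{ConsT}. Your argument is in fact more careful than the paper's terse ``follows from \ref{ConsT}'': you have correctly observed that in the case $v(x)=\theta$ the preimage $y$ is only guaranteed to be regular \emph{for $q$}, not fully regular --- indeed if $x\in M_{tor}\setminus\{0\}$ every preimage is torsion and hence irregular --- so the assertion should be read with that relaxation, which is all that is actually used downstream.
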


\begin{theorem}\label{lemdecomp} Let $(M,v)$ be a   henselian divisible valued module.
\begin{enumerate}[1.]
\item $M_{tor}$ embeds  in
$M_{\geq \theta}/M_{>\theta}$ and  for any such  embedding the image of $M_{tor}$ is a  direct summand.
\item The $R$-modules $M_{\geq \theta}/(M_{tor} + M_{>\theta})$, $M_{>\theta}$ and $M/M_{\geq \theta}$ are torsion free and divisible.
\item $M$ can be written as a direct sum: $$M_{tor} \oplus M_{\theta} \oplus M_{>\theta} \oplus M_{-},$$ where
$M_{\theta}$ is isomorphic to  $M_{\geq \theta}/(M_{tor} + M_{>\theta})$ and $(M_{-},v)$ is isomorphic as a valued module to
$(M/M_{\geq \theta},v_{M_{\geq \theta}})$.  This decomposition   is valuation independent and each member of this decomposition, except $M_{tor}$, is a regular $D$-vector space.
\end{enumerate}
\begin{proof}
$1.$ Since all non-zero elements of $M_{tor}$ are of valuation  $\theta$, the canonical surjection ($M_{\geq \theta} \to M_{\geq \theta} / M_{>\theta}$) induces an embedding of  $M_{tor}$ in
$M_{\geq \theta}/M_{>\theta}$. Since $M_{tor}$ is divisible its image in $M_{\geq \theta} / M_{>\theta}$ is a direct summand.

\noindent
$2.$ The fact that  $M_{>\theta}$ is torsion free and divisible is given by \ref{henseldivisible}. The divisibility of  $M/M_{\geq \theta}$  is induced by the divisibility of $M_{\geq \theta}$, it is torsion free by \ref{ConsTcorr} since it is a regular valued module. Also $M_{\geq \theta}/(M_{tor} + M_{>\theta})$ is divisible by divisibility of $M$. We will now show that $M_{\geq \theta}/(M_{tor} + M_{>\theta})$ is torsion free.
Let $x \in M_{\geq \theta}$ such that  $x.r \in M_{tor} + M_{>\theta}$
for some $r\neq 0$. Since $M_{tor}$ is divisible, we have $x.r-z'.r=(x-z').r \in M_{>\theta}$ for some $z' \in M_{tor}$. If $x-z' \notin M_{>\theta}$ then $x-z'$ is irregular. In this case, by  Corollary \ref{obsmodT},  $x-z' \in M_{tor} + M_{>\theta}$ and hence $x \in M_{tor} + M_{>\theta}$.\\
$3.$ Since $M_{\geq \theta}$ and $M_>\theta$ are both divisible,
$M \simeq M_{\geq \theta} \oplus M/M_{\geq \theta}$ and  $$M_{\geq \theta} \simeq M_{> \theta}
\oplus M_{\geq \theta}/M_{>\theta}$$ and   we get
 $M_{\geq \theta}/M_{>\theta} \simeq M_{tor} \oplus \left(M_{\geq \theta}/(M_{tor} + M_{>\theta})\right)$.

Take a direct summand $M_{-}$ of $M_{\geq \theta}$ in $M$. If $x \in M_{-}\setminus \{0\}$ then $v(x)<\theta$ and by the definition of the quotient valuation $v_{M_{\geq \theta}}$, $(M_{-}, v)$ and $(M/ M_{\geq \theta},v_{M_{\geq \theta}})$  are isomorphic as valued modules.

\noindent
It remains to show that this decomposition is   valuation independent. For this it is enough to see that  $M_{tor}$ and $M_\theta$ are  valuation independent.  Suppose that  there exist $x\in M_{tor}$, $y  \in M_{\geq \theta} \setminus \left(M_{tor} + M_{>\theta}\right)$ with $v(x-y)> \theta$. Then, for some $r \in R\setminus\{0\}$  annihilating  $x$, we have $v((x-y).r)=v(y.r)> \theta$: this is impossible since  $M_{\geq \theta}/(M_{tor} + M_{>\theta})$ is torsion free.
\end{proof}
\end{theorem}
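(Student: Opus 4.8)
The plan is to deduce the three assertions from material already in place: over $R$ a module is divisible iff injective, so every divisible submodule splits off as a direct summand (Lemma~\ref{modinjective}); the valuation formulas of Lemma~\ref{ConsT}; and, most importantly, the canonical decomposition of an irregular element of a henselian divisible module as a torsion part plus a part of value $>\theta$ (Corollary~\ref{obsmodT}), which itself rests on the divisibility of $M_{\geq\theta}$ and $M_{>\theta}$ coming from henselianity (Lemma~\ref{henseldivisible}). For~(1), note first that every nonzero $x\in M_{tor}$ has $v(x)=\theta$: choosing $r\neq0$ with $x.r=0$, the equality $v(x.r)=\infty$ strictly exceeds $\min_{i}\tau^{i}(v(x))$ (finite since $x\neq0$), so Lemma~\ref{ConsT}(3) gives $v(x)=\theta$. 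Hence $M_{tor}\subseteq M_{\geq\theta}$, and $M_{tor}\cap M_{>\theta}=0$ because $M_{>\theta}$ is torsion free (Lemma~\ref{henseldivisible}). Therefore the canonical surjection $M_{\geq\theta}\to M_{\geq\theta}/M_{>\theta}$ is injective on $M_{tor}$; its image is divisible, hence injective, hence a direct summand by Lemma~\ref{modinjective}(2).

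For~(2): $M_{>\theta}$ is torsion free and divisible by Lemma~\ref{henseldivisible}; $M/M_{\geq\theta}$ is divisible (a quotient of the divisible module $M$) and torsion free because it is a regular valued module (Lemma~\ref{ConsTcorr}); and $Q:=M_{\geq\theta}/(M_{tor}+M_{>\theta})$ is divisible since $M_{\geq\theta}$ is. The substantive point is that $Q$ is torsion free. Given $x\in M_{\geq\theta}$ and $r\neq0$ with $x.r\in M_{tor}+M_{>\theta}$, write $x.r=z+w$ with $z\in M_{tor}$, $w\in M_{>\theta}$; divisibility of $M_{tor}$ yields $z'\in M_{tor}$ with $z=z'.r$, so $(x-z').r=w\in M_{>\theta}$. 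If $x-z'\in M_{>\theta}$ we are done. Otherwise $v(x-z')=\theta$ while $v((x-z').r)>\theta$, so $x-z'$ is irregular (Remark~\ref{irregelts}); then Corollary~\ref{obsmodT} writes $x-z'\in M_{tor}+M_{>\theta}$, whence $x\in M_{tor}+M_{>\theta}$.

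For~(3), peel off divisible submodules successively. As $M_{\geq\theta}$ is divisible, hence injective, $M=M_{\geq\theta}\oplus M_{-}$ for some submodule $M_{-}$. Inside $M_{\geq\theta}$, the submodule $M_{tor}+M_{>\theta}=M_{tor}\oplus M_{>\theta}$ (internal, by the disjointness from~(1)) is divisible, hence injective, so $M_{\geq\theta}=M_{tor}\oplus M_{>\theta}\oplus M_{\theta}$ for some $M_{\theta}\cong M_{\geq\theta}/(M_{tor}+M_{>\theta})$, torsion free and divisible by~(2). Combining, $M=M_{tor}\oplus M_{>\theta}\oplus M_{\theta}\oplus M_{-}$. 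The projection $M\to M/M_{\geq\theta}$ restricts to an $R$-isomorphism on $M_{-}$; for $0\neq x\in M_{-}$ one has $x\notin M_{\geq\theta}$, so $v(x)<\theta$ and $v_{M_{\geq\theta}}(x+M_{\geq\theta})=v(x)$ by definition of the quotient valuation, giving $(M_{-},v)\cong(M/M_{\geq\theta},v_{M_{\geq\theta}})$ as valued modules. For valuation independence: nonzero elements of $M_{tor}$ and of $M_{\theta}$ have value $\theta$ (the latter because $M_{\theta}\subseteq M_{\geq\theta}$ meets $M_{>\theta}$ trivially), those of $M_{>\theta}$ have value $>\theta$, those of $M_{-}$ have value $<\theta$; so in a finite sum from these summands the minimal value is either attained at a single summand (ultrametricity then finishes it) or attained precisely on the $M_{tor}$- and $M_{\theta}$-parts, and it therefore suffices to check $v(x_{tor}+x_{\theta})=\theta$ whenever $x_{tor}\in M_{tor}$, $x_{\theta}\in M_{\theta}$ are not both zero. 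But $v(x_{tor}+x_{\theta})>\theta$ would put $x_{tor}+x_{\theta}$ in $M_{>\theta}$, forcing $x_{tor}=x_{\theta}=0$ by the directness of $M_{\geq\theta}=M_{tor}\oplus M_{>\theta}\oplus M_{\theta}$ (equivalently, from torsion-freeness of $Q$). Finally, each of $M_{>\theta}$, $M_{\theta}$, $M_{-}$ is torsion free and divisible, hence a $D$-vector space, and is regular: $M_{>\theta}$ and, through the valued isomorphism, $M_{-}$ by Lemma~\ref{ConsTcorr}; and $M_{\theta}$ because an irregular nonzero $x\in M_{\theta}$ would give $x.r\in M_{>\theta}\cap M_{\theta}=0$ for some $r\neq0$ (Remark~\ref{irregelts}), contradicting torsion-freeness.

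The main obstacle is essentially the single step of showing $M_{\geq\theta}/(M_{tor}+M_{>\theta})$ is torsion free, which is also what powers the valuation independence. This is exactly where henselianity is indispensable: it enters through the divisibility of $M_{>\theta}$ and the resulting ``torsion plus value $>\theta$'' normal form for irregular elements (Corollary~\ref{obsmodT}); without it the $\theta$-layer $M_{\geq\theta}/M_{>\theta}$ need not split off its torsion part in a valuation-compatible way. Everything else is routine splitting of injective submodules together with the valuation formulas of Lemma~\ref{ConsT}.
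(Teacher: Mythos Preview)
Your proof is correct and follows essentially the same route as the paper: the key step---torsion-freeness of $M_{\geq\theta}/(M_{tor}+M_{>\theta})$ via the irregular-element decomposition of Corollary~\ref{obsmodT}---is identical, and the rest is the same injective-splitting scaffolding. The only organizational difference is that in part~(3) you split off $M_{tor}\oplus M_{>\theta}$ from $M_{\geq\theta}$ in one step (using that a direct sum of divisible modules is divisible), whereas the paper peels off $M_{>\theta}$ first and then the image of $M_{tor}$ inside $M_{\geq\theta}/M_{>\theta}$; your way is slightly cleaner since $M_{\theta}$ is obtained directly as a submodule of $M_{\geq\theta}$, and your valuation-independence check via directness is a bit crisper than the paper's torsion argument. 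You also supply the explicit regularity verifications for $M_{\theta}$ and $M_{-}$, which the paper leaves implicit.
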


\begin{corr}\label{corrvalindep}
For all $i\in \{\theta, >\mathrel{\theta}, -\}$, for all $x,y \in M_i\setminus\{0\}$ the following are equivalent:
\begin{enumerate}[$\bullet$]

\item $v(x-y)>v(x)=v(y)$,
\item there is some $r  \in R\setminus\{0\}$ such that $v(x.r-y.r)>v(x.r)=v(y.r)$,
\item for all $r  \in R\setminus\{0\}$ we have $v(x.r-y.r)>v(x.r)=v(y.r)$.
\end{enumerate}
\begin{proof}
Follows by the fact that each $M_{i}$ is regular.
\end{proof}
\end{corr}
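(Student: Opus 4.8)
Write $(\mathrm{i})$, $(\mathrm{ii})$, $(\mathrm{iii})$ for the three conditions in the order stated. The plan is to establish the cyclic chain $(\mathrm{i})\Rightarrow(\mathrm{iii})\Rightarrow(\mathrm{ii})\Rightarrow(\mathrm{i})$; the implication $(\mathrm{iii})\Rightarrow(\mathrm{ii})$ is immediate (instantiate $(\mathrm{iii})$ at, say, $r=1$), so the content lies in $(\mathrm{i})\Rightarrow(\mathrm{iii})$ and $(\mathrm{ii})\Rightarrow(\mathrm{i})$. The first step, carried out once and for all, is to extract from Theorem~\ref{lemdecomp} the following: for each $i\in\{\theta,>\theta,-\}$ the summand $M_i$ is a regular torsion free module, and every non-zero element of $M_i$ has valuation on one fixed side of $\theta$ — strictly above if $i=\,>\theta$ (by definition of $M_{>\theta}$), equal to $\theta$ if $i=\theta$, and strictly below if $i=-$ — the last two facts being read off from the direct-sum decomposition together with $M_\theta\cap M_{>\theta}=0$ and $M_-\cap M_{\geq\theta}=0$. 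Feeding this into Lemma~\ref{ConsT} yields, for each $r\in R\setminus\{0\}$, a single exponent $k=k(r)\in\N$ (equal to $\deg_{is}(r)$, to $0$, or to $\deg(r)$ according as $i$ is $>\theta$, $\theta$, or $-$) with
$$v(z.r)=\tau^{k(r)}\bigl(v(z)\bigr)\neq\infty\qquad\text{for every }z\in M_i\setminus\{0\}.$$
The point I want to stress is that $k(r)$ is \emph{independent of} the chosen non-zero $z\in M_i$; this uniformity, together with the strict monotonicity of $\tau$ from the $\tau$-chain axioms (so that each $\tau^{k}$ is order-preserving, injective, and order-reflecting on the linearly ordered $\Delta$), is all that is needed.

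For $(\mathrm{i})\Rightarrow(\mathrm{iii})$ I would argue as follows: assume $v(x-y)>v(x)=v(y)$ and fix $r\in R\setminus\{0\}$ with $k=k(r)$. The case $x=y$ is trivial, since then $v(x.r-y.r)=\infty>v(x.r)$, so suppose $x\neq y$; then $x-y\in M_i\setminus\{0\}$, and the displayed identity applies to $x$, to $y$ and to $x-y$ with the \emph{same} exponent $k$. As $\tau^{k}$ is order-preserving, $v(x)=v(y)$ gives $v(x.r)=v(y.r)$ and $v(x-y)>v(x)$ gives $v\bigl((x-y).r\bigr)>v(x.r)$; since $x.r-y.r=(x-y).r$ this is exactly $(\mathrm{iii})$.

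For $(\mathrm{ii})\Rightarrow(\mathrm{i})$: given $v(x.r-y.r)>v(x.r)=v(y.r)$ for some $r$, set $k=k(r)$. From $\tau^{k}(v(x))=v(x.r)=v(y.r)=\tau^{k}(v(y))$ and injectivity of $\tau^{k}$ we get $v(x)=v(y)$; if $x=y$ then $(\mathrm{i})$ holds trivially, and otherwise $v(x.r-y.r)=v\bigl((x-y).r\bigr)=\tau^{k}(v(x-y))$, so comparison with $v(x.r)=\tau^{k}(v(x))$ and the order-reflecting property of $\tau^{k}$ give $v(x-y)>v(x)=v(y)$, i.e. $(\mathrm{i})$. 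I expect the one genuinely load-bearing point to be the uniformity of $k(r)$ over $x$, $y$ and $x-y$ isolated in the first paragraph: without regularity of $M_i$ and the fact that all its non-zero elements lie on a single side of $\theta$, the exponent could vary from element to element (e.g. $\deg_{is}(r)$ versus $\deg(r)$) and the scheme would collapse. (When $i=\theta$ one has $k(r)=0$ and every non-zero element has valuation $\theta$, so all three conditions reduce to $x=y$; this degenerate case is harmless.)
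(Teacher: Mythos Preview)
Your proof is correct and follows the same approach as the paper: the paper's proof is the one-line remark ``Follows by the fact that each $M_i$ is regular,'' and you have simply unpacked what that means, isolating the uniform exponent $k(r)$ and using the strict monotonicity of $\tau$ to transfer the inequalities back and forth. One minor remark: for $i=\theta$ your appeal to Lemma~\ref{ConsT} is not quite on the nose (that lemma treats $v(x)\gtrless\theta$), but you correctly fall back on the definition of regularity to get $v(z.r)=\theta$ there, so no harm is done.
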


\begin{remark}
Let $(K \subseteq U,v)$ be an extension of fields of  characteristic $p>0$, where $U$ is algebraically closed and $|K|^{+}$-saturated  and $v$ is trivial on $K$. Then  every non zero element of the set $U_{\theta}$ realizes the generic type of the valuation ring of $U$, i.e.
the type $\{x \in \mathcal{O}_U \wedge x \notin B \tq B\subsetneq \mathcal{O}_U \; \text{is}\;  K\text{-definable}\}$.
Note that here definable means definable in the language
$\{0,1,+,-,\times, \mathcal{O}\}$ of valued fields. \end{remark}

\subsection{Embedding theorems and A-K,E principles}

Let $(M, v)$ be a valued module. If $A$ is a submodule of $M$, Corollary \ref{rem-contre-ex} shows that the various  divisible closures of $A$ in $M$, while being isomorphic as $R$ -modules, may not be   elementary equivalent as valued modules. This is the most important phenomenon to which we will pay attention.

\begin{defn}
Let $(M, v)$ be a divisible valued module and $A \subseteq M$ a submodule of $ M $. Then we define $$ \hat {A}: = \{y \in M \tq y.r \in A \mesp \text{for some} \mesp r \in R \setminus \{0 \} \}.$$
\end {defn}

\begin{lem} \label{uniquecloturedivisiblesurTor}
The submodule $\hat{A}$ is the unique  divisible closure of $A + M_{tor}$ inside $M$. In particular, for all $r \in R \setminus \{0 \}$ and all $x \in M$, if $x.r \in \hat {A}$ then
$ x \in \hat {A}$. Moreover, if  $(M, v)$  is henselian then $\hat{A}$  is henselian.
\begin{proof}
By construction, $\hat {A}$ is a  divisible submodule and all divisible submodules of $M$ containing $A$ and $M_{tor}$ contains $\hat{A}$. This gives the uniqueness. Also by the definition of $\hat{A}$, if $x \in \hat{A} $ and $ r \in R \setminus \{0 \}$, $ \hat{A} $ contain all $ y $ such that $ x = y.r$ since
$\hat{A} $ contains $ M_ {tor}$. Now, if $ (M, v)$ is henselian and $ x \in
\hat{A}$ of valuation $>\theta $, then $\hat{A}$ contains the unique element  $ y_{> \theta } \in M_ {> \theta} $ such that $y_{> \theta}.r = x$. This shows that $ \hat{A} $ is henselian.
  \end{proof}
\end{lem}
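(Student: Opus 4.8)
The plan is to prove the three assertions of Lemma~\ref{uniquecloturedivisiblesurTor} in sequence, each one reducing to earlier results in the paper. First I would establish that $\hat{A}$ is a submodule: if $x.r\in A$ and $y.s\in A$ with $r,s$ nonzero, then by the Ore property of $R$ there are $r',s'$ with $rr'=ss'\neq 0$, so $(x-y).rr'=x.r.r'-y.s.s'\in A$, hence $x-y\in\hat{A}$; closure under scalar multiplication on the right is immediate. Then $\hat{A}$ is divisible: this is essentially the computation already performed in Lemma~\ref{Mtordivisible}, since $M$ is divisible and $\hat{A}$ is closed under taking preimages (if $y.r=x\in\hat{A}$ with $x.q\in A$, $q\neq 0$, then $y.rq\in A$ and $rq\neq 0$). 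Clearly $\hat{A}\supseteq A$ and $\hat{A}\supseteq M_{tor}$ (take $r$ an annihilator), and conversely any divisible submodule $E$ of $M$ with $A+M_{tor}\subseteq E$ must contain $\hat{A}$: given $y\in\hat{A}$ with $y.r\in A\subseteq E$, divisibility of $E$ gives some $y'\in E$ with $y'.r=y.r$, so $(y-y')\in M_{tor}\subseteq E$, whence $y\in E$. This yields both the uniqueness and, by the characterization in the remark following Lemma~\ref{modinjective} (an intermediate divisible module is a divisible closure iff every element has a nonzero multiple back inside it), the fact that $\hat{A}$ is \emph{the} divisible closure of $A+M_{tor}$.

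The "in particular" clause — that $x.r\in\hat{A}$ with $r\neq 0$ implies $x\in\hat{A}$ — follows at once: writing $x.r.q\in A$ for some $q\neq 0$ gives $x.(rq)\in A$ with $rq\neq 0$. The displayed argument in the statement that $\hat{A}$ contains every $y$ with $y.r=x$ when $x\in\hat{A}$ is just the same observation combined with $M_{tor}\subseteq\hat{A}$: two such solutions differ by a torsion element.

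For the henselianity claim, assume $(M,v)$ is henselian and let $x\in\hat{A}$ with $v(x)>\theta$, and let $r$ be separable. By Lemma~\ref{henseldivisible}, $M_{>\theta}$ is divisible and torsion free, so there is a \emph{unique} $y_{>\theta}\in M_{>\theta}$ with $y_{>\theta}.r=x$ (uniqueness because the difference of two solutions would be a torsion element of $M_{>\theta}$, forcing it to be $0$). Since $y_{>\theta}.r=x\in\hat{A}$ and $r\neq 0$, the preimage-closure just established gives $y_{>\theta}\in\hat{A}$, with $v(y_{>\theta})>\theta$; this is exactly axiom~H for $\hat{A}$.

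I do not anticipate a serious obstacle here: the lemma is a packaging result and every ingredient (the Ore property, divisibility of preimages, the divisible-closure characterization, and uniqueness of solutions in the torsion-free regular part $M_{>\theta}$) is already available. The one point requiring a little care is making sure that in the henselian step one genuinely lands in $M_{>\theta}$ rather than merely in $M_{\geq\theta}$ — but this is precisely why Lemma~\ref{henseldivisible} is invoked, as it guarantees the solution supplied by henselianity of $M$ has valuation $>\theta$ and is unique among such.
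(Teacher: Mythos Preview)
Your proof is correct and follows essentially the same route as the paper's, only with considerably more detail filled in: you make explicit the Ore-based submodule verification, the minimality argument showing any divisible $E\supseteq A+M_{tor}$ contains $\hat{A}$, and the uniqueness of the preimage in $M_{>\theta}$ via torsion-freeness from Lemma~\ref{henseldivisible}. The one small imprecision is that closure of $\hat{A}$ under right scalar multiplication is not quite ``immediate'' --- it also uses the Ore property (find a common right multiple of $r$ and $s$) --- but the argument is routine and the overall structure matches the paper's.
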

\begin{remark}\label{tauclosure} The set $v(\hat{A}) $ is an $L_{V_0}$-substructure of $v(M)$, it is the closure of
$ v(A) $ under
the function $\tau^{-1}$.
\end{remark}

\begin{remark}\label{predicatesRn} Consider a henselian divisible valued module $(M,v)$.
Then,
\begin{enumerate}
\item $M_{\geq \theta}/M_{>\theta}=0$ or infinite,
\item if $\gamma \neq \theta$ and $r$ is non zero with $\deg_{is}(r)=k$ and $\deg(r)=n$
then
\begin{enumerate}
\item if $\gamma>\theta$ the multiplication by $r$
$$.r: M_{\geq \gamma} \to M_{\geq \tau^k(\gamma)}$$
is a bijection inducing a bijection
$$ M_{\geq \gamma}/M_{> \gamma} \to M_{\geq \tau^k(\gamma)}/M_{> \tau^k(\gamma)},$$
\item if $\gamma<\theta$ the multiplication by $r$
$$.r: M_{\geq \gamma} \to M_{\geq \tau^n(\gamma)}$$
is a bijection inducing a bijection
$$ M_{\geq \gamma}/M_{> \gamma} \to M_{\geq \tau^n(\gamma)}/M_{> \tau^n(\gamma)}.$$
\end{enumerate}
\end{enumerate}
\end{remark}
\begin{proof}
If $\theta \in v(M)$ then $M_{\theta}$ or $M_{tor}$ is non empty, divisible and embedded into $M_{\geq \theta}$. The second assertion follows from divisibility and regularity of any element of valuation
$\neq \theta$.
\end{proof}

-- For each $n \in \N$, we add  a unary predicate $R_n$ into our language $ L_ {V_0}$ and let $L_V$ be this enrichment of $ L_ {V_0}$. We denote $L_0:=L_{Mod}(R) \cup L_{V_0}  \cup \{v \}$ and
$L:=L_{Mod}(R) \cup L_{V}  \cup \{v \}$. In every valued module  $(M, v)$ and for all $\gamma \in v(M) \setminus \{\infty \}$, $R_n (\gamma)$ will be interpreted
by the equivalence  $$R_n(\gamma) \Leftrightarrow \absolue{M_{ \geq \gamma} / M_{> \gamma}} \geq n.$$

\begin{proposition}\label{Achapeau}
Let $ (M, v) $ and $ (N, w)$   be  divisible henselian valued modules  such that
$\eta_M = \eta_N$ and $N$ is $|M|^{+}$-saturated. Let $(A,\Delta_1) \subseteq (M, v(M))$ and $(B,\Delta_2) \subseteq (N,v(N))$ be  $L$-substructures of $M$ and $N$ respectively, which are $L$-isomorphic via ${\bf f} = (f, f_{v})$.  Then there is an
$L$-embedding  ${\bf f}=(\hat{f},\hat{f}_v)$ of $(\hat {A},v(\hat{A})\cup \Delta_1)$ to $(N,v(N))$ extending ${\bf f}$, having range
$(\hat{B}, v(\hat{B})\cup\Delta_2)$.

\begin{proof}
By Remark \ref{tauclosure} and the fact that $\tau$ is strictly increasing, it is easy to see that $f_v$ extends uniquely to an $L_{V_0}$-embedding $$\hat{f}_v:v(\hat{A})\cup \Delta_1 \to w(\hat{B})\cup \Delta_2.$$

Now we will extend $f$ to $\hat{f}:\hat{A} \to N$ such that $(\hat{f}, \hat{f}_v)$ is an $L$-embedding. Take a decomposition $$\hat{A}= M_{tor}\oplus \hat{A}_ {\theta} \oplus
\hat{A}_{-} \oplus \hat{A}_{>\mathrel{\theta}}$$ as given by Theorem \ref{lemdecomp}.

Let $ I := \{\theta, >\mathrel{\theta}, {-} \} $ and
for $ i \in I $ denote by $ f_i $  the restriction of
$ f $ to  $ A \cap \hat {A_i} $.
By the definition of $ \hat {A}$ and  by the fact that
$ \hat {A} _i $ is torsion free, if $x \in \hat {A}_i $ then there is $ r \in R \setminus \{0 \} $ such that $ x.r \in A$ and $x.r  \neq  0$ whenever $x\neq 0$,  Therefore,
each  $ \hat {A} _i $ admits  a $ D $-vector space basis
consisting of elements of $ A $.
Then, $ f_i $ extends uniquely  to  an isomorphism of $D$-vector spaces
$\hat{f}_i: \hat{A}_i \to \hat{f}_i(\hat{A}_i) \subseteq N$.
Now  we let
$C:=\bigoplus_{i \in I} \hat{A}_i$, define
$\hat{f}_C: C \to N$ as
$$x=x_{\theta} + x_{-} + x_{>\theta} \mapsto \hat {f}_{\theta} (x_ {\theta}) +
\hat {f} _ {-} (x_ {-}) + \hat {f} _ {> \theta}(x_ {> \theta})$$ and
$$A_t:=\{x \in M_{tor} \tq \exists  c\in C, \mesp  x+c \in A\}.$$
Note that $A_t$ is a submodule of $M_{tor}$.
We will see that the restriction of $f$ to $A\cap M_{tor}$ admits a unique extension $h$ to $A_t$ such that the map
$$ x = x_{tor} + x_{\theta} + x_{-} + x_{> \theta}\mesp \mapsto \mesp h(x_ {tor}) + \hat{f}_C(x-x_{tor}) $$ defined on  $A_t \oplus C$ is  a monomorphism of $R$-modules extending $f$. Let $x \in
A_t \setminus A$ and $c \in C$ be such that $x + c \in A$. We have to check that
$$x \mapsto h(x):=f(x+c) - \hat{f}_C(c)$$ is well defined.
Let $d \in C$ be such that $x+d \in A$. Note that $d-c \in A$. Now we have
\begin{multline*}f(x+d) - \hat{f}_C(d) - \left(f(x+c) - \hat{f}_C(c)\right) \\
= f(d-c) - \hat{f}_C(d -c)= f(d-c) - f(d-c)=0.\end{multline*}  By the definition of $h$, $h$ is a morphism of $R$-modules.
Moreover $h$ is injective: If for some $c \in C$, $x + c \in A$ and $f(x+c)-\hat{f}_{C}(c)=0$ then $\hat{f}_C(c) \in B$ hence
$c \in A$. It follows that $x\in A$ and $f(x)=0$ hence  $x=0$.

Set $\hat{B}_i:=\hat{f}(\hat{A}_i)$. By construction
the $\hat{B}_i$ are valuation independent, in particular, their sum is direct. Define  $E:=\bigoplus_{i \in I} \hat{B}_i$.  We will now check that the range of $h$ is exactly
$$B_{t}:=\{y \in N_{tor} \tq \exists z \in E \mesp y+z \in B\}.$$
Let $y \in B_t$ and $z \in E$ be such that $y + z \in B$. Let $r$ be the minimal polynomial of
$y$. Then $z.r \in B$ since $y.r=0$ and $(y+z).r \in B$. Let $c'\in C$ be the image of $z$ under $\hat{f}_C^{-1}$ and $x \in A$ be the image of $y+z$ under $f^{-1}$. Then $x_0:=x-c'$ is annihilated by $r$ and  $h(x_0)=y$.

Since $\eta_M=\eta_N$ and $N$ is $|M|^{+}$-saturated, by \ref{isomorphismetors}  we can extend $h$ to an elementary $L_{Mod}(R)$-embedding $M_{tor} \to N$, again denoted by $h$, having its range equal to $N_{tor}$.  Hence we can define an $L_{Mod}(R)$-embedding $\hat{A} \to \hat{B}$, denoted by
$ \hat{f} $,  by setting
$$\hat{f}(a):= h(a_ {tor}) + \hat {f}_C(c)$$
 where $a = a_{tor} + a_{\theta} + a_{-} + a_{> \theta} \in \hat{A}$
with $c=a_{\theta} + a_{-} + a_{> \theta}$.

Note now that each $\hat{B}_i$ is regular. In fact, if $\hat{f}_i(x)=y\in \hat{B}_i$ is irregular then, there exists a non zero $y_0 \in N_{tor}$ and $y_{>\theta}\in N_{>\theta}$ such that  $y=y_0 + y_{>\theta}$. Let $r$ be the minimal polynomial of $y_0$, then $y.r=y_{>\theta}.r \in B$. Since $\hat{B}_{>\theta}$ is torsion free, $y_{>\theta}=y$. Contradiction.

Denote by $\hat{f}_{v,i}$ the restriction of $\hat{f}_v$ on $v(\hat{A}_i)$.  Since each $\hat{B}_i$ is regular the $(\hat{f}_i, \hat{f}_{v,i})_i$ are $L_0$-embeddings. Moreover since by Fact \ref{faitreg} the decomposition
$N_{tor}\oplus E$ is valuation independent, we have
$w(h(x) + \hat{f}_C(x))=\min\{w(h(x), w(\hat{f}_C(x))\}$ for all $x\in \hat{A}$. Hence  $(\hat{f},\hat{f}_v)$ is an $L_0$-embedding. Now  it is in fact an $L$-embedding since
$\mathbf{f}$ is an $L$-embedding and, by Remark \ref{tauclosure},   $v(\hat{A})$ is the closure of $v(A)$ by $\tau^{-1}$ and by Remark \ref{predicatesRn} each quotient $\hat{A}_{\geq \gamma}/\hat{A}_{>\gamma}$ is entirely determined by a quotient of the form $A_{\geq \tau^k(\gamma})/A_{>\tau^k(\gamma)}$.
\end{proof}
\end{proposition}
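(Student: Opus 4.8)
The plan is to construct $\hat{\mathbf f}=(\hat f,\hat f_v)$ in three stages — first on the value set, then on the module (treating the torsion part apart), and finally checking that the result respects $v$ and every $R_n$. For the value set: by Remark~\ref{tauclosure}, $v(\hat A)$ is precisely the closure of $\Delta_1=v(A)$ under $\tau^{-1}$, so every element of $v(\hat A)\setminus\Delta_1$ is $\tau^{-k}(\gamma)$ for some $\gamma\in\Delta_1$ and $k\geq 1$. Since $\tau$ is strictly increasing and injective and $f_v$ commutes with $\tau$ on $\Delta_1$, one is forced to set $\hat f_v(\tau^{-k}(\gamma)):=\tau^{-k}(f_v(\gamma))$; a routine verification (apply $\tau$ enough times to reduce to $\Delta_1$) shows this is a well-defined, order-preserving $L_{V_0}$-embedding onto $w(\hat B)\cup\Delta_2$. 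The $R_n$-predicates are dealt with at the end.

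For the module side, I would fix a decomposition $\hat A=M_{tor}\oplus\hat A_\theta\oplus\hat A_{>\theta}\oplus\hat A_-$ as in Theorem~\ref{lemdecomp}, with each summand other than $M_{tor}$ a regular $D$-vector space and the whole decomposition valuation independent. The crucial observation is that, by the definition of $\hat A$ and torsion-freeness of the summands, for $i\in\{\theta,>\theta,-\}$ each $x\in\hat A_i$ satisfies $x.r\in A$ for some $r\in R\setminus\{0\}$, with $x.r\neq 0$ when $x\neq 0$; hence $\hat A_i$ is spanned over $D$ by $A\cap\hat A_i$, so the restriction of $f$ to $A\cap\hat A_i$ has a unique $D$-linear extension $\hat f_i:\hat A_i\to\hat B_i\subseteq N$. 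Writing $C:=\bigoplus_i\hat A_i$ and $\hat f_C:=\bigoplus_i\hat f_i$, I would extend $f|_{A\cap M_{tor}}$ first to $A_t:=\{x\in M_{tor}\tq x+c\in A\text{ for some }c\in C\}$ by $h(x):=f(x+c)-\hat f_C(c)$ — this is well defined since two admissible $c$'s differ by an element of $A\cap C$, on which $\hat f_C$ and $f$ agree, and injective since $\hat f_C(c)\in B$ forces $c\in A$ and then $x=0$ — so that $A_t\oplus C$ maps isomorphically onto $B_t\oplus E$, where $E:=\bigoplus_i\hat B_i$ and $B_t:=\{y\in N_{tor}\tq y+z\in B\text{ for some }z\in E\}$. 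Finally, since $\eta_M=\eta_N$ and $N$ is $|M|^{+}$-saturated, Corollary~\ref{isomorphismetors} lets me push $h$ to an elementary $L_{Mod}(R)$-embedding $M_{tor}\to N$ with image $N_{tor}$; then $\hat f:=h\oplus\hat f_C$ on $\hat A=M_{tor}\oplus C$ is an $R$-module embedding extending $f$, and its image $N_{tor}\oplus E$ equals $\hat B$ by Lemma~\ref{uniquecloturedivisiblesurTor}, both being the divisible closure of $B+N_{tor}$.

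It remains to see that $(\hat f,\hat f_v)$ is an $L$-embedding. First each $\hat B_i$ is regular: an irregular element of $\hat B_i$ would split as $y_{tor}+y_{>\theta}$ with $y_{tor}\neq 0$, and multiplying by a minimal polynomial of $y_{tor}$, together with torsion-freeness of $\hat B_i$, would force $y=y_{>\theta}\in N_{>\theta}$, contradicting irregularity. By Corollary~\ref{corrvalindep} each $\hat f_i$ is then a valued-module embedding, and since by Fact~\ref{faitreg} both $M_{tor}\oplus C$ and $N_{tor}\oplus E$ are valuation independent, $w(\hat f(x))$ is the minimum of the values of the four components, which is $\hat f_v(v(x))$; hence $(\hat f,\hat f_v)$ is an $L_0$-embedding. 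For the $R_n$'s: by Remark~\ref{predicatesRn}, for $\gamma=\tau^{-k}(\delta)$ with $\delta\in\Delta_1$, multiplication by a suitable element of $R$ (say $t^k$) induces a bijection $M_{\geq\gamma}/M_{>\gamma}\to M_{\geq\delta}/M_{>\delta}$, and the same holds in $N$; since $\mathbf f$ preserves $R_n$ on $\Delta_1$ and $\hat f_v$ commutes with $\tau$, all the $R_n$ are preserved on $v(\hat A)$. The step I expect to be the main obstacle is the torsion bookkeeping in the second stage: producing $h$ on $A_t$ and checking it is well defined and injective, identifying the image of $A_t\oplus C$ as $B_t\oplus E$ \emph{before} applying Corollary~\ref{isomorphismetors}, and then ensuring that the enlarged map is still an $R$-module embedding with image exactly $\hat B$. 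By contrast, the value-set extension is forced by the axioms on $\tau$, and the valuation compatibility reduces cleanly to regularity of the summands together with Fact~\ref{faitreg}.
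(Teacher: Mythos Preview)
Your proposal is correct and follows essentially the same route as the paper's proof: the same three-stage plan (value set via $\tau^{-1}$-closure, module via the decomposition of Theorem~\ref{lemdecomp} with the torsion part handled separately through $A_t$ and Corollary~\ref{isomorphismetors}, then valuation and $R_n$ compatibility via regularity and Remark~\ref{predicatesRn}). Your explicit appeal to Lemma~\ref{uniquecloturedivisiblesurTor} to identify the image with $\hat B$ is a nice clarification the paper leaves implicit; the citation of Corollary~\ref{corrvalindep} is slightly off (it is stated for the pieces of $M$, not of $\hat A$), but the underlying regularity argument is exactly what both proofs use.
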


\begin{nota}
For $a \in M$, where $M$ is valued by $v$, and $\gamma \in v(M)$, we denote by
$M_\gamma(a)$ the quotient of the closed ball $\{u \in M \tq v(u-a)\geq \gamma\}$ given by the equivalence relation $$u \sim u' \Leftrightarrow
v(u-u')> \gamma .$$ Note that $$M_{\gamma}(a)=\vert M_{\geq \gamma}/M_{>\gamma} \vert .$$
\end{nota}

\begin{proposition}\label{plongementsature}
Let $ M $ and $ N $ be as before, $ A $ be a submodule of $ M $ such that
$ v(A) =v( M) $ and ${\bf f} = (f, f_v): A \to N $ an $ L$-embedding of $ A $ into $ N $. Suppose that $ N $ is $ \absolue{M}^{+}$-saturated. Then we can extend ${\bf f} $ to an $L$-embedding of $ M $ into $ N $.
\begin{proof}
Note first that we can extend ${\bf f}$ on $\hat{A}$ by  Proposition \ref{Achapeau}. So we assume $\hat{A}=A$ with image $B=\hat{B}$.  Take $ x \in M \setminus A $. It suffices to extend {\bf f} to  $ A \oplus  x.R$. In that case, by the above proposition, we can extend {\bf f} to the  divisible closure (unique by the fact that $ A \supseteq M_{tor} $) of $ A \oplus  x.R$. Thus, by transfinite induction, {\bf f} extends to $M$.

Set
 $$ p(Y):=\{\gamma_a = w(Y-b) \tq \gamma_a=f_v(v(x-a)) \mesp \text{and} \mesp
b = f (a), \ a \in A\}.$$

We first show that if $ y \in N $  realizes $p(Y)$, then we can extend {\bf f} to an $ L$-embedding $:A \oplus x.R \to N $ sending $ x $ to $y $. Take  $y \in N $
realizing $ p (Y) $. Then  for all $ r \in R \setminus
\{0 \} $, $ y.r \notin B $. We set $ \tilde{f}(a + x.r) = f (a) +  y.r.$
Since $ v (A) = v (M) $ it suffices to see that $ \tilde{f} $ is compatible
with $f_v $,  i.e. to check that one has $f_v(v(a - x.r)) = w(f(a) - y.r)$, for all $ a \in A $ and $ r \in R \setminus \{0 \}$.

\noindent
{\it Claim}: For $a\in A$ and $r\neq 0$, $x-a$ is regular for $r$ if and only if  $y-f(a)$ is regular for $r$.

\noindent
{\it Proof of the Claim}. If $y-f(a)$ is irregular for some $r$, then for some $b \in  ann_N(r) \subseteq B$, $$w(y-f(a)-b)>\theta$$
by \ref{ConsT} (Consequence 2). Hence $v(x-a-f^{-1}(b))>\theta$ by the choice of $y$. This means $x-a$ is irregular by \ref{ConsT}. The inverse can be proven in the same way.

Let  $ a \in A $, $ r \in R \setminus \{0 \} $. Take $a'$ such that
$x.r - a=(x-a').r$  and $x-a'$ regular for $r$.
Hence $f(a)=f(a').r$ and $f(a')-y$ is regular for $r$ by the Claim above. Since $f_v$ commutes with $\tau$ we have $f_v(v(a - x.r)) = w(f(a) - y.r)$ as required.$\dagger$

It remains to prove that $p(Y) $ is realized in $ N $. We will show that it is finitely consistent. Let $ \alpha = \{a_1, \dots, a_m \} \subseteq A $.
We will find $ y \in N $ such that for all $ 1 \leq i \leq m$, $ \gamma_{i}: = f_v (v (x-a_i)) $ is equal to $ w (y - f (a_i)) $. Let $ \sim $ be the equivalence relation on $ \alpha $ defined by $ a_i \sim a_j $ if and only if $ \gamma_i = \gamma_j$. We will first observe that,  without loss of generality, we can assume that there is only one class under this equivalence relation: In fact, it is enough to consider $ \beta \subseteq \alpha $ be the equivalence class  of  $ \gamma: = \max \{\gamma_i \} _{i \in \{1, \dots, m \}} $. If $ a \in \beta $ and $ a' \notin \beta $ then \begin{multline*}
 f_v(v(x-a')) = f_v(v(x-a + a-a')) = f_v(v (a-a ')) = w(f (a) - f (a' )) \\ = w(f (a)-y + y-f(a')) = w(f(a')-y).\end{multline*}

Set $ \delta := f_v^{-1}(\gamma)$, with the assumption above, it is the common value of the $v(x-a)$ for $ a \in \alpha $. Now,  we can choose an element $ y \in N $ such that for all $a\in \alpha$
$$w(y-f(a))=\gamma$$  since
  $ \absolue{M_{\delta} (a)} \leq
\absolue{N_{\gamma} (f (a))} $, $ f_v $ preserves the predicate $R_n $ and
$ N $ is $\absolue{M}^+$-saturated.
\end{proof}
\end{proposition}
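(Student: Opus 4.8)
The plan is to build the extension one element at a time, after first enlarging $A$ to a divisible henselian submodule containing $M_{tor}$. Precisely, I would start by applying Proposition~\ref{Achapeau} to replace ${\bf f}: A \to N$ by its extension to $\hat A$; since $\hat A$ is the divisible closure of $A + M_{tor}$ (Lemma~\ref{uniquecloturedivisiblesurTor}) and $v(A) = v(M)$ already, there is no loss in assuming from the outset that $A = \hat A$, so that $A$ is divisible and henselian, $A \supseteq M_{tor}$ (hence $ann_M(r) \subseteq A$ for every $r$), and $v(A)$ is closed under $\tau^{-1}$. It then suffices to extend ${\bf f}$ across a single $x \in M \setminus A$: once ${\bf f}$ is extended to $A \oplus x.R$, Proposition~\ref{Achapeau} again extends it to the (unique, because it still contains $M_{tor}$) divisible closure of $A \oplus x.R$, and transfinite induction finishes.

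For the one-step extension I would introduce the partial type over $f(A)$
$$p(Y) := \{\, w(Y - f(a)) = f_v(v(x - a)) \;:\; a \in A \,\}$$
and argue that (i) any realization of $p(Y)$ yields a valid extension, and (ii) $p(Y)$ is realized in $N$. For (i): given $y \models p(Y)$, set $\tilde f(a + x.r) := f(a) + y.r$; the only point to check is that $\tilde f$ respects the value map, i.e. $f_v(v(a - x.r)) = w(f(a) - y.r)$. The key lemma here is that $x - a$ is regular for $r$ in $M$ if and only if $y - f(a)$ is regular for $r$ in $N$ --- which follows from Corollary~\ref{obsmodT} together with the defining equations of $p(Y)$, precisely because $ann_M(r) \subseteq A$ and $ann_N(r) \subseteq f(A)$. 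Writing $x.r - a = (x - a').r$ with $x - a'$ regular for $r$ (Corollary~\ref{obsmodT}), regularity and the fact that $f_v$ commutes with $\tau$ then give the required identity.

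For (ii), since $N$ is $|M|^+$-saturated and $p(Y)$ has at most $|M|$ parameters, it is enough to show $p(Y)$ is finitely satisfiable. Given $a_1, \dots, a_m \in A$ with $\gamma_i := f_v(v(x - a_i))$, I would first reduce to the case where all the $\gamma_i$ coincide: letting $\beta$ be the set of indices with $\gamma_i$ equal to the maximum $\gamma$, one checks for $a \in \beta$, $a' \notin \beta$ that $v(x - a') = v(a - a') < v(x - a)$, so that satisfying the conditions indexed by $\beta$ forces the others. Writing $\delta := f_v^{-1}(\gamma)$ for the common value of the $v(x - a_i)$, $a_i \in \beta$, and fixing $a_0 \in \beta$, the task becomes to find a nonzero class in $N_{\geq \gamma}/N_{> \gamma}$ distinct from all classes $(f(a_i) - f(a_0)) + N_{> \gamma}$. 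These forbidden classes correspond bijectively, via $f$, to the classes $(a_i - a_0) + M_{> \delta}$ in $M_{\geq \delta}/M_{> \delta}$; and the class of $x - a_0$ is a nonzero class of $M_{\geq \delta}/M_{> \delta}$ which avoids every one of them, so the forbidden set has cardinality strictly smaller than $|M_{\geq \delta}/M_{> \delta}| = |N_{\geq \gamma}/N_{> \gamma}|$ --- this cardinal equality holding because $f_v$ preserves every predicate $R_n$, the infinite case being trivial since the forbidden set is then anyway finite. Hence a suitable class exists in $N$, giving the desired $y$.

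The main obstacle, I expect, is less any single computation than keeping the regularity bookkeeping honest: the whole argument hinges on having arranged $ann_M(r) \subseteq A$ (via the reduction to $A = \hat A$), since that is exactly what makes ``regular for $r$'' a property that transfers across ${\bf f}$ and lets valuations be computed through $\tau$ on both sides. The secondary subtlety is the strictness in the finite-satisfiability count, where the element $x$ itself --- not any element of $A$ --- provides the ``missing'' class that guarantees room in $N$.
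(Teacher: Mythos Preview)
Your proposal is correct and follows essentially the same route as the paper's proof: reduce to $A=\hat A$ via Proposition~\ref{Achapeau}, extend one element at a time using the type $p(Y)$, transfer regularity through $\mathbf f$ via Corollary~\ref{obsmodT} (using $ann_M(r)\subseteq A$), and check finite satisfiability after reducing to a single value $\gamma$. Your finite-satisfiability argument is in fact spelled out more carefully than the paper's---you make explicit that the class of $x-a_0$ in $M_{\geq\delta}/M_{>\delta}$ witnesses the strict inequality, whereas the paper condenses this into the line ``$|M_\delta(a)|\leq|N_\gamma(f(a))|$''---but the content is the same.
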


To prove Theorem \ref{aketrivial} we recall the following general result.

\begin{proposition}\label{qe}
Let $\mathcal {L} $ be a language containing a constant symbol, $\mathcal {T} $ an
$\mathcal {L} $-theory and $\Theta $
a set of formulas of $\mathcal {L} $ closed under boolean combinations and containing all quantifier free formulas. Suppose that for all
$ M, N\models\mathcal {T} $ where $ N $ is
$\absolue{M}^{+}$-saturated and for all substructure
$ A\subseteq M$, whenever $f: A \to N $  is an $\mathcal {L} $-embedding preserving formulas of
$\Theta $, there exists an $\mathcal {L} $-embedding $ g:M\to N $ which extends $f$ and preserves $\Theta $.
 Then every formula of $\mathcal {L} $ is equivalent to a formula of
$\Theta $.
\begin {proof}
If $\Theta $ is the set of  all quantifier-free formulas this is a well known fact (cf.
\cite {marker} proposition 4.3.28). The general case reduces to this case after adding to the language the predicates $P_{\phi}$ for each $\phi \in \Theta$ and enriching the theory $\mathcal{T}$ by the set of sentences
  $\{\forall x\mesp P_ {\phi} (x)\iffs \phi(x)\tq\phi\in\Theta \} $.
\end {proof}
\end{proposition}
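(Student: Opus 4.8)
The statement to prove is the "abstract quantifier-elimination" Proposition~\ref{qe}: if for all $M,N \models \mathcal{T}$ with $N$ being $|M|^+$-saturated, every $\Theta$-preserving $\mathcal{L}$-embedding $A \to N$ from a substructure $A \subseteq M$ extends to a $\Theta$-preserving embedding $M \to N$, then every $\mathcal{L}$-formula is $\mathcal{T}$-equivalent to one in $\Theta$. The plan is to reduce the general case to the classical back-and-forth characterization of quantifier elimination (\cite{marker}, Proposition~4.3.28), which is precisely the case $\Theta = \{$quantifier-free formulas$\}$. The reduction is by a language expansion: introduce a fresh relation symbol $P_\phi$ for each $\phi \in \Theta$, set $\mathcal{L}^* := \mathcal{L} \cup \{P_\phi \tq \phi \in \Theta\}$, and let $\mathcal{T}^*$ be $\mathcal{T}$ together with the defining axioms $\{\forall \bar{x}\mesp (P_\phi(\bar{x}) \iffs \phi(\bar{x})) \tq \phi \in \Theta\}$.

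The key steps, in order. First, observe that $\mathcal{T}^*$ and $\mathcal{T}$ have the same models up to the canonical reduct/expansion correspondence: every $\mathcal{T}$-model has a unique expansion to a $\mathcal{T}^*$-model, and an $\mathcal{L}$-formula is $\mathcal{T}$-equivalent to a $\Theta$-formula iff it is $\mathcal{T}^*$-equivalent to a quantifier-free $\mathcal{L}^*$-formula — here one uses that $\Theta$ is closed under boolean combinations, so a quantifier-free $\mathcal{L}^*$-formula, after replacing each $P_\phi$ by $\phi$, becomes a boolean combination of $\Theta$-formulas, hence a $\Theta$-formula. Second, one must check that an $\mathcal{L}^*$-substructure of a $\mathcal{T}^*$-model $M$ is the same thing as an $\mathcal{L}$-substructure $A \subseteq M$ with the induced interpretation of the $P_\phi$; and that an $\mathcal{L}^*$-embedding between such structures is exactly an $\mathcal{L}$-embedding that preserves and reflects each $P_\phi$, i.e. preserves the formulas of $\Theta$ in both directions. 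Since $\Theta$ is closed under negation, a $\Theta$-preserving $\mathcal{L}$-embedding automatically preserves negations of $\Theta$-formulas, hence reflects $\Theta$-formulas as well; so "$\Theta$-preserving $\mathcal{L}$-embedding" and "$\mathcal{L}^*$-embedding" coincide. Third, translate the hypothesis: the given extension property for $\Theta$-preserving $\mathcal{L}$-embeddings becomes precisely the statement that every $\mathcal{L}^*$-embedding $A \to N$ from a substructure of $M$ extends to an $\mathcal{L}^*$-embedding $M \to N$, for $N$ being $|M|^+$-saturated; note $|M|$ and saturation are unchanged by the expansion since $\mathcal{L}^*$ has the same cardinality bound relevant here (or simply: an $|M|^+$-saturated $\mathcal{L}^*$-structure has $|M|^+$-saturated $\mathcal{L}$-reduct and conversely the unique expansion is still saturated, as the $P_\phi$ are $\mathcal{L}$-definable). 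Fourth, apply the classical result to $\mathcal{T}^*$: every $\mathcal{L}^*$-formula is $\mathcal{T}^*$-equivalent to a quantifier-free $\mathcal{L}^*$-formula. Finally, given an arbitrary $\mathcal{L}$-formula $\psi$, view it as an $\mathcal{L}^*$-formula, obtain a quantifier-free $\mathcal{L}^*$-equivalent $\chi$, and replace each occurrence of $P_\phi$ in $\chi$ by $\phi$; the result is a $\Theta$-formula $\mathcal{T}$-equivalent to $\psi$.

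The main obstacle — really the only place requiring care rather than bookkeeping — is verifying that the hypothesis transfers cleanly across the expansion: one must be sure that (a) the notion of substructure is the right one (an $\mathcal{L}^*$-substructure of the expansion of $M$ need not a priori be closed under anything new, since the $P_\phi$ are relation symbols, not function symbols, so $\mathcal{L}^*$-substructures of $M^*$ are exactly $\mathcal{L}$-substructures of $M$ with the restricted $P_\phi$), and (b) saturation degree is preserved, which is immediate because the expansion is by $\mathcal{L}$-definable predicates and hence adds no new types over any parameter set. Once these two points are dispatched, the argument is a direct citation of \cite{marker}. I would write the proof essentially as it appears: state the expansion, note the model correspondence and the coincidence of $\Theta$-preserving $\mathcal{L}$-embeddings with $\mathcal{L}^*$-embeddings, invoke the quantifier-free case for $\mathcal{T}^*$, and back-substitute.
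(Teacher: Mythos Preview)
Your proposal is correct and follows exactly the same approach as the paper: reduce to the classical quantifier-free case by expanding the language with predicates $P_\phi$ for each $\phi\in\Theta$ and adding the defining axioms $\forall \bar x\,(P_\phi(\bar x)\leftrightarrow\phi(\bar x))$, then invoke \cite{marker}~4.3.28. The paper's proof is a two-sentence sketch of precisely this reduction; you have simply spelled out the bookkeeping (substructures, embeddings, saturation transfer) that the paper leaves implicit.
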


Let $ (F, v (F))$ be a non zero henselian divisible $R$-module. Let $T_v $ be  the complete theory of
$ v (F) $ in the language $ L_{V}$ and $ \text{Tor}_F $ be the
$L_{Mod}(R)$-theory of non zero divisible $R$-modules  with  the extra  statements of the form ${\eta_F (r)} = n $ or sets of statements expressing $ \absolue{\eta_F (r)} = \infty $ where
$${\eta_F (r)}=\absolue{\{x\in F \tq x.r=0\}}.$$ By \ref{completions} this is the complete theory of (the pure module)  $F$ and also of $F_{tor}$ if it is non zero (by \ref{Mtordivisible}). Similarly to the case of valued fields, $\text{Tor}_F$ can be seen as {\it the residual theory} of $(F,v)$ since $F_{tor}$ embeds in $F_{\geq \theta}/F_{>\theta}$.

We set  {\bf T}$:= T_{h} \cup T_v \cup \text{Tor}_F $ and $\Theta_{v} $
the set of  $L$-formulas of the form:
$$ \varphi (\bar{x})  \wedge Q \bar{y_1}
\mesp \psi(\bar{y_1}, \bar{y_2}, v (t_1(\bar{x})), \dots, v(t_k(\bar{x})),$$
where:

-- $T_h$ is the theory of henselian valued $R$-modules,

-- $\varphi$ is quantifier-free in $L_{Mod}(R)$, $ \psi $ is quantifier-free in $L_{V}$ and $ Q $ is a sequence of quantifiers over $\bar{y}_1$,

-- $\bar{x} $ is a tuple of variables of the module sort, $\bar{y_1}, \bar{y_2}$ are tuples of variables of the  value set sort, and the $ t_i $ are $L_{Mod}(R)$-terms.

Theorem \ref{aketrivial} follows now by the following theorem:

\begin{theorem}\label{aketrivialQ}
The theory {\bf T}  is complete and eliminates quantifiers  on the  module sort: any formula of $ L$ is equivalent modulo {\bf T} to a formula of $ \Theta_v$.
\begin{proof}
Take $(M,v), (N,w) \models \mathbf{T}$ such that $N$ is $|M|^{+}$-saturated.
Let  $ {\bf f} = (f, f_v): (A,\Delta)  \to N $ be an $L$-embedding preserving formulas of $\Theta_v$ where $(A,\Delta)$ is a substructure of $(M,v)$. Note that  the condition $ \theta \in v(M) $  is described by the $L_{V_0}$-theory  of $v(F)$. Furthermore, since
$ f_v$  preserves in particular all $ L_{V}$-formulas, $f_v$ is partial elementary and extends to an elementary embedding of $v(M)$ into
$v(N)$ by saturation hypothesis. This extension will noted as $f_v$ as well. Hence we may assume $\Delta = v(M)$ (but $v$ is not necessarily surjective).

On the other hand by Proposition \ref{Achapeau}, there is an $ L$-embedding $ {\bf \hat {f}} = (\hat {f}, \hat {f_v}) $ extending $ {\bf f} $ to
$(\hat{A}, v(M))$. Note that
any element $ \gamma \in v(\hat {A}) \setminus \{\theta \} $ is $ v (A) $-definable by a formula of the form
$\tau^k (\gamma) = \gamma_a $ for some integer $k$ and some $ \gamma_a \in v(A) $. This implies that $ \mathbf{\hat{f}} $ preserves the formulas from $ \Theta_v$. Thus from now on we can assume that $ A = \hat {A} $.

-- {\bf Extending $f$ to an $(U,v)$ such that $v(U)=v(M)$.}
Let $\gamma \in v(M) \setminus A$, $x \in M$ of valuation $\gamma$ and $ y \in N $ such that $ w (y) = g_v (\gamma) $.  Since $ v (A) $ is closed by $ \tau $ and $ \tau^{-1} $, we have, for all non zero $ r \in R $, $ v(x.r) \notin v (A) $. It follows that $ v (a + x.r) = \min \{v (a), v (x.r) \} $  for all $ r \in R $ and  $a  \in A$.  Note that $x$ is regular. In fact, either $M$ is torsion-free and hence  $M$ is regular by Lemma \ref{tfh_gives_reg}, or, $M_{tor} \subseteq A$ and $\gamma\neq \theta$. Thus $v(x.r)=\tau^k(v(x))$ for some integer $k$ depending only on $\gamma$. Taking into account that $f_v$ is an elementary function,  the map $g$ defined by
 $$ g (a + x.r):= f (a) + y.r  \quad  (r \in R , a \in A) $$
 yields that $ {\bf g} = (g, f_v) $ is a partial $L$-isomorphism between $( A \oplus x.R, v) $ and $(B \oplus  y.R,w)$    and $ {\bf g} $ preserves  the set  $\Theta_v$ by the above discussion. By Proposition \ref{Achapeau}$, {\bf g} $ extends to $\widehat{A + x.R}$. Hence, by using the same argument, we can extend $ {\bf g} $ to a model $ U $ such that  $ v (U) = v (M) $.

 Now, by Proposition \ref{plongementsature} we can extend ${\bf g}$ to $M$ and hence we get quantifier elimination up to $\Theta_v$ by Theorem \ref{qe}.

It follows that any $L$-sentence is equivalent modulo {\bf T} to an $L_V$-sentence. Since by the definition of
{\bf T} any two models have $L_V$-elementary equivalent value sets,  they are in fact $L$-elementary equivalent. This gives the completeness of
{\bf T} as required.
\end{proof}
\end{theorem}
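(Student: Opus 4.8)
The plan is to verify the hypotheses of the abstract quantifier elimination criterion (Proposition \ref{qe}) with $\mathcal{L} = L$, $\mathcal{T} = \mathbf{T}$, and $\Theta = \Theta_v$. So one takes two models $(M,v)$, $(N,w)$ of $\mathbf{T}$ with $N$ being $|M|^+$-saturated, a substructure $(A,\Delta)$ of $(M,v)$, and a $\Theta_v$-preserving $L$-embedding $\mathbf{f} = (f,f_v)\colon (A,\Delta)\to N$; the goal is to extend it to an $L$-embedding of all of $M$ that still preserves $\Theta_v$. The first reduction is to notice that every $L_V$-formula lies in $\Theta_v$ (taking $\varphi$ trivial and no module terms), so $f_v$ is partial elementary as a map of value sets; by the saturation of $v(N)$ we extend $f_v$ to an elementary embedding $v(M)\to v(N)$, and henceforth assume $\Delta = v(M)$. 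Note that the $L_{V_0}$-theory of $v(M)$ already decides whether $\theta\in v(M)$, so this case distinction causes no trouble.

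The heart of the argument is then a three-stage extension of $f$ on the module sort, each stage feeding the next via transfinite induction. \emph{Stage 1: close off the torsion and divisible part.} Apply Proposition \ref{Achapeau} to replace $A$ by $\hat A$ (with image $\hat B$); one must check this extension still preserves $\Theta_v$, which follows because every $\gamma\in v(\hat A)\setminus\{\theta\}$ is $v(A)$-definable by a formula $\tau^k(\gamma) = \gamma_a$ with $\gamma_a\in v(A)$, so no new $L_V$-information about the value set is introduced and the $R_n$-predicates are controlled via Remark \ref{predicatesRn}. So we may assume $A = \hat A \supseteq M_{tor}$. \emph{Stage 2: adjoin a single element of new value.} For $\gamma\in v(M)\setminus v(A)$, pick $x$ of value $\gamma$ and $y\in N$ with $w(y) = f_v(\gamma)$; since $v(A)$ is closed under $\tau$ and $\tau^{-1}$, for every nonzero $r\in R$ we have $v(x.r)\notin v(A)$, so $v(a + x.r) = \min\{v(a), v(x.r)\}$ and $x$ is regular (either $M$ is torsion free, hence regular by Lemma \ref{tfh_gives_reg}, or $M_{tor}\subseteq A$ and $\gamma\neq\theta$). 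Then $g(a + x.r) := f(a) + y.r$ defines a partial $L$-isomorphism onto $B\oplus y.R$ preserving $\Theta_v$, and after closing off its divisible hull (uniquely, since $A\supseteq M_{tor}$) via Proposition \ref{Achapeau} again, iterating this yields an extension to a model $U$ with $v(U) = v(M)$. \emph{Stage 3:} with $v$ now surjective onto $v(M)$, invoke Proposition \ref{plongementsature} to extend all the way to $M$.

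Having produced a $\Theta_v$-preserving $L$-embedding $M\to N$ extending $\mathbf{f}$, Proposition \ref{qe} gives that every $L$-formula is $\mathbf{T}$-equivalent to a $\Theta_v$-formula, which is the quantifier-elimination-on-the-module-sort statement. For completeness: any $L$-sentence is then equivalent to an $L_V$-sentence, and by construction $\mathbf{T}$ pins down the complete $L_V$-theory $T_v$ of the value set (and the residual theory $\mathrm{Tor}_F$), so any two models of $\mathbf{T}$ agree on all $L$-sentences. I expect the main obstacle to be bookkeeping in Stage 2 — specifically checking that the newly defined $g$ respects $f_v$ on all terms $v(a + x.r)$ and that passing to successive divisible hulls does not quietly enlarge the value set beyond $v(M)$ or violate $\Theta_v$-preservation — but all the real content there is already packaged in Propositions \ref{Achapeau} and \ref{plongementsature} together with the regularity dichotomy, so the remaining work is organizational rather than conceptual.
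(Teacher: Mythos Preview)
Your proposal is correct and follows essentially the same approach as the paper's own proof: the same three-stage extension (close off to $\hat A$ via Proposition~\ref{Achapeau}, adjoin elements of new value one at a time until $v(U)=v(M)$, then invoke Proposition~\ref{plongementsature}), the same verification that $\Theta_v$-preservation survives passage to $\hat A$ via $\tau^k$-definability of new values, and the same regularity argument for the new element $x$. The completeness argument is also identical.
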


\begin{corr} Theorem \ref{aketrivial} is a consequence of the completeness of ${\bf T}$. Moreover,
\begin{enumerate}[$\bullet$]
\item {\normalfont (A-K,E $ \preceq $):}\label{A-K-Epreceq}
Let $ (M \subseteq N, v) $ be an extension of non-zero henselian divisible valued modules such that the inclusion $(M_{tor} \subseteq N_{tor})$ is $L_{Mod}(R)$-elementary and  the inclusion ($v(M) \subseteq v(N)$) is $L_V$-elementary. Then the inclusion ($M \subseteq N, v$) is $L$-elementary.
\end{enumerate}
\end{corr}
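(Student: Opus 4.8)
The plan is to deduce both assertions directly from Theorem~\ref{aketrivialQ}, choosing in each case an appropriate module for the definition of $\mathbf{T}$; no new model-theoretic input is needed, the work being entirely in unwinding how $\mathbf{T}$ and $\Theta_v$ are built.

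First I would derive Theorem~\ref{aketrivial}. Let $(F,v)$ and $(G,w)$ be nonzero henselian divisible valued modules with $F_{tor}\equiv G_{tor}$ in $L_{Mod}(R)$ and $v(F)\equiv v(G)$ in $L_V$, and form $\mathbf{T}=T_h\cup T_v\cup\text{Tor}_F$ from $F$. Then $(F,v)\models\mathbf{T}$ by construction. For $(G,w)$: it is henselian, so $G\models T_h$; since $v(F)\equiv v(G)$ in $L_V$, the value set $w(G)$ satisfies $T_v$; and since $F_{tor}\equiv G_{tor}$, for every nonzero $r$ the set $ann(r)$ (which lies in the torsion submodule) has the same cardinality in $F_{tor}$ and $G_{tor}$ — finite and equal, or both infinite — because ``$|ann(r)|\ge n$'' and ``$|ann(r)|=n$'' are expressible by first-order sentences (resp.\ a scheme of such). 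Hence $\eta_F=\eta_G$, and $G$ being nonzero and divisible gives $G\models\text{Tor}_F$ by Theorem~\ref{completions}, which identifies $\text{Tor}_F$ with the complete $L_{Mod}(R)$-theory pinned down by $\eta_F$. So $(G,w)\models\mathbf{T}$, and completeness of $\mathbf{T}$ yields $(F,v)\equiv(G,w)$ in $L$.

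Next I would prove (A-K,E $\preceq$). Given $(M\subseteq N,v)$ as in the statement, take $\mathbf{T}$ built with $F:=M$. The same three checks as above — henselianity of $N$; $v(M)\preceq v(N)$, hence $v(M)\equiv v(N)$; $M_{tor}\preceq N_{tor}$, hence $\eta_M=\eta_N$ — give $N\models\mathbf{T}$, while $M\models\mathbf{T}$ is immediate. Now apply the quantifier elimination down to $\Theta_v$ of Theorem~\ref{aketrivialQ}: any $L$-formula with free module variables $\bar x$ and free value-set variables $\bar z$ is $\mathbf{T}$-equivalent to
$$\varphi(\bar x)\wedge Q\bar y_1\;\psi\bigl(\bar y_1,\bar z,v(t_1(\bar x)),\dots,v(t_k(\bar x))\bigr)$$
with $\varphi$ quantifier-free in $L_{Mod}(R)$, $\psi$ quantifier-free in $L_V$, $Q$ a block of quantifiers over value-set variables, and the $t_i$ module terms. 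For $\bar a$ in $M$ and $\bar\gamma$ in $v(M)$, the conjunct $\varphi(\bar a)$ is a boolean combination of $L_{Mod}(R)$-equations evaluated at $\bar a$, hence absolute between the submodule $M$ and $N$; and the conjunct $Q\bar y_1\,\psi(\bar y_1,\bar\gamma,v(t_1(\bar a)),\dots,v(t_k(\bar a)))$ is an $L_V$-sentence with parameters $\bar\gamma$ and the $v(t_i(\bar a))$, all of which lie in $v(M)$ and are computed identically in $M$ and $N$ since $M$ is a valued submodule; it therefore holds in $v(M)$ iff in $v(N)$, as $v(M)\preceq v(N)$ in $L_V$. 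Hence $M$ and $N$ agree on every $L$-formula at parameters from $M$, i.e.\ $M\preceq N$ in $L$.

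The only delicate point will be verifying $N\models\mathbf{T}$ (resp.\ $(G,w)\models\mathbf{T}$): this is exactly where the ``residue'' and value-set hypotheses are consumed, via the facts that each cardinal $\eta(r)$ is invariant under $L_{Mod}(R)$-elementary equivalence of the torsion submodules and that, by Theorem~\ref{completions}, these cardinals together with divisibility and nontriviality determine the entire pure-module theory. Everything else is routine bookkeeping: a $\Theta_v$-formula splits into a module part (absolute under submodules) and a value-set part (governed by $v(M)\preceq v(N)$), and the values of module terms at points of $M$ already lie in $v(M)$.
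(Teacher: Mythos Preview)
Your proof is correct and follows exactly the approach the paper intends: the corollary is stated without proof in the paper, and your argument is precisely the natural unpacking of Theorem~\ref{aketrivialQ}---first using completeness of $\mathbf{T}$ for (A-K,E~$\equiv$), then using the relative quantifier elimination down to $\Theta_v$ together with absoluteness of quantifier-free module formulas and the hypothesis $v(M)\preceq v(N)$ for (A-K,E~$\preceq$). There is nothing to add.
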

\bibliographystyle{plain}
\bibliography{valmodgo}

\end{document}

\end{document}